\numberwithin{equation}{section}
\newtheorem{theorem}{Theorem}[section]
\newtheorem{lemma}{Lemma}[section]
\newtheorem{corollary}{Corollary}[section]
\theoremstyle{definition}
\newtheorem{definition}{Definition}[section]
\theoremstyle{remark}
\newtheorem{remark}{Remark}[section]
\author[Balashov Faminskii]{Oleg S. Balashov, Andrei V. Faminskii}
\address{RUDN University, 6 Miklukho-Maklaya St, Moscow, 117198, Russian Federation}
\email{balashovos@s1238.ru, faminskiy-av@rudn.ru}
\keywords{inverse problem, integral conditions, Korteweg--de~Vries equation}
\title[Inverse problems for the generalized KdV equation ]{Inverse problems with integral conditions for the generalized Korteweg--de~Vries equation}
\date{}
\begin{document}
\maketitle

\begin{abstract}
Results on well-posedness of three inverse problems with integral conditions on a bounded interval for the generalized Korteweg--de~Vries equation without any restrictions on the growth rate of nonlinearity are established. Either the right-hand side of equation or the boundary data, or both are chosen as controls. The considered solutions are regular with respect to the time variable. Assumptions on smallness of the input data or smallness of a time interval are required.
\end{abstract}

\section{Introduction. Notation. Description of main results}\label{S1}

The paper is concerned with inverse problems for the generalized Korteweg--de~Vries equation
\begin{equation}\label{1.1}
u_t +b u_x + u_{xxx} + \bigl(g(u)\bigr)_x = f(t,x),
\end{equation}
posed on an interval $I = (0,R)$ ($R>0$ is arbitrary). For certain $T>0$ in a rectangle $Q_T = (0,T)\times I$ consider an initial-boundary value problem for equation \eqref{1.1} with an initial condition
\begin{equation}\label{1.2}
u(0,x) = u_0 (x),\quad x\in [0,R],
\end{equation}
and boundary conditions
\begin{equation}\label{1.3}
u(t,0) = \mu_0(t), \quad u(t,R) = \nu_0(t), \quad u_x(t,R) = \nu_1(t),\quad t\in [0,T].
\end{equation}
The main attention is paid to an inverse problem with two integral overdetermination conditions for $j=1 $ and $2$
\begin{equation}\label{1.4}
\int_I u(t,x) \omega_j(x)\, dx = \varphi_j(t),\quad t\in [0,T],
\end{equation}
for certain given functions $\omega_j$ and $\varphi_j$. Two other inverse problems with one overdetermination condition \eqref{1.4} for $j=0$ are also considered. 

In the first problem the function $f$ is represented as
\begin{equation}\label{1.5}
f(t,x) = h_0(t,x) + F(t) h(t,x),
\end{equation}
where the functions $h_0$ and $h$ are given, while the function $F$ is unknown. Moreover, the boundary data $\nu_1$ is also unknown and the goal is to determine the functions $F$ and $\nu_1$ such that the corresponding solution to problem \eqref{1.1}--\eqref{1.3} satisfies conditions \eqref{1.4}, $j=1,\, 2$. 

In the second problem the function $F$ in \eqref{1.5} is unknown, in the third one --- the function $\nu_1$. The goal is to determine each of this functions to provide for the solution of the same initial-boundary value problem implementation of one additional condition \eqref{1.4}, $j=0$. 

The Korteweg--de~Vries equation
$$
u_t + b u_x + u_{xxx} + uu_x = f(t,x)
$$
is the model equation for description of nonlinear waves in dispersive media. There is a lot of papers devoted to various initial-boundary value problems for this equation as well as for its generalization \eqref{1.1}, in particular, posed on a rectangle (see, for example, the bibliography in \cite{F19-1}). Inverse problems were mostly studied with the terminal overdetermination condition
$$
u(T,x) = u_T(x),
$$
which is usually called the controllability problems (again see, for example, the bibliography in \cite{F19-1}).

On the contrary, there is a few papers concerned with inverse problems for the Korteweg--de~Vries equation and its generalizations with integral overdetermination conditions. In \cite{F19-1} two inverse problems for the Korteweg--de~Vries equation itself posed on a rectangle $Q_T$ with initial and boundary conditions \eqref{1.2}, \eqref{1.3} under one integral condition \eqref{1.4} were studied. Either the function $F$ from the right-hand side of the equation\eqref{1.5} or the boundary data $\nu_1$ were unknown. Existence and uniqueness  results in the class of weak solutions under either small input data or a small time interval were obtained. 

The inverse problem with two integral conditions \eqref{1.4} for the Korteweg--de~Vries equation type equation
$$
u_t + u_{xxx} + uu_x + \alpha(t)u = F(t)h(x)
$$
in the periodic case, where the functions $\alpha$ and $F$ were unknown, was considered in \cite{LCL} and the existence and uniqueness results were established for a small time interval.

In \cite{C-FS} and \cite{Mar} similar to \cite{F19-1} results were obtained for the Kawahara equation which is the fifth-order generalization of the Korteweg--de~Vries equation (a more general nonlinearity $g'(u)u_x$ was considered in the second paper). 

Three inverse problems similar to the ones in the present paper were studied in \cite{FM} for the higher order nonlinear Schr\"odinger equation and results on well-posedness (that is existence, uniqueness and continuous dependence on the input data) in the classes of weak solutions either under small input data or a small time interval were obtained. Similar results for a wide class of odd-order quasilinear evolution equations and systems  with general nonlinearity were recently established in \cite{F23, BF24, BF25}.

Note that in all cited papers, where the general nonlinearity were considered (\cite{Mar, FM, F23, BF24, BF25}), this nonlinearity was subjected to certain growth restrictions. For example, it follows from results of paper \cite{F23} that for equation \eqref{1.1} it was assumed that for all $u\in \mathbb R$
$$
|g'(u)| \leq c|u|^a, \quad a\in (0,2].
$$ 

The inverse problem with unknown $F$ and one integral overdetermination condition on unbounded intervals for the Korteweg--de~Vries equation itself was studied in \cite{F19-2}.

The significance and various applications of integral conditions of type \eqref{1.4} were discussed in \cite{POV} (see also \cite{FJ, FN}). 

In the present paper the inverse problems are considered in function spaces regular with respect to the time variable. This approach gives an opportunity to avoid any restrictions on the growth rate of nonlinearity $g(u)$ in equation \eqref{1.1}. However, on this way certain accuracy with compatibility of initial and boundary data is required. Results on well-posedness are established under assumptions on smallness either of the input data or the time interval.

Fix for the whole paper certain natural $k$. Solutions of the considered problems are constructed in a special function space
\begin{multline*}
X^k(Q_T) = \{ u(t,x): \partial _t^m u \in C([0,T]; L_2(I)) \cap L_2(0,T; H^1(I)),\ 0\leq m \leq k; \\ \partial_t^m u \big|_{t=0} \in H^{3(k-m)}(I),\ 0\leq m \leq k-1\}
\end{multline*}
endowed with the natural 	norm
\begin{multline}\label{1.6}
\|u\|_{X^k(Q_T)} \\= \sum\limits_{m=0}^k\left[\max\limits_{t\in [0,T]} \|\partial_t^m u(t,\cdot)\|_{L_2(I)} + \|\partial_t^m u_x\|_{L_2(Q_T)}\right] +
\sum\limits_{m=0}^{k-1} \|\partial_t^m u\big|_{t=0} \|_{H^{3(k-m)}(I)}.
\end{multline}

\begin{remark}\label{R1.1}
In paper \cite{F19-1} weak solutions were constructed in the space
$$
X(Q_T) = C([0,T];L_2(I)) \cap L_2(0,T;H^1(I)).
$$
\end{remark}

For description of properties of right-hand side of the equation introduce a function space
\begin{multline*}
M^k(Q_T) = \{ f(t,x): \partial_t^m f \in L_2(Q_T),\ 0\leq m \leq k;\\
\partial_t^m f\big|_{t=0} \in H^{3(k-m-1)}(I), \ 0\leq m \leq k-1\}
\end{multline*}
endowed with the natural norm
\begin{equation}\label{1.7}
\|f\|_{M^k(Q_T)}  \\= \sum\limits_{m=0}^k \|\partial_t^m f\|_{L_2(Q_T)}+ \sum\limits_{m=0}^{k-1} \|\partial_t^m f\big|_{t=0} \|_{H^{3(k-m-1)}(I)}.
\end{equation}

\begin{remark}\label{R1.2}
It is obviously assumed that any function $f\in M^k(Q_T)$ is already changed on a zero measure set providing $\partial_t^m f \in C([0,T]; L_2(I))$, $0\leq m \leq k-1$.
\end{remark}

It is always assumed that the function $g$ satisfies the following condition:
\begin{equation}\label{1.8}
g\in C^{3k-1}(\mathbb R),\quad g(0) = g'(0) =0.
\end{equation}

\begin{remark}\label{R1.3}
There is no loss of generality in the the assumption $g(0) = g'(0) = 0$ since nonlinearity in equation \eqref{1.1} can be written in a form $g'(u)u_x$ and this equation contains the term $bu_x$. 
\end{remark}

Formally differentiating $g(u(t,x))$, one can easily derive an equality
\begin{equation}\label{1.9}
\partial_t^m g(u) = \sum\limits_{l=0}^m g^{(l)}(u) \sum\limits_{n_1+\dots+n_l =m} c_{m,l,n_1,\dots,n_l}\partial_t^{n_1} u \dots \partial_t^{n_l} u
\end{equation}
(where $c_{0,0}=1$ and $c_{m,0} =0$ for $m\geq 1$).

In order to define compatibility conditions introduce the following auxiliary functions $\Phi_m(x) = \Phi_m(x;u_0,f)$: let $\Phi_0 \equiv u_0$ and for $m\geq 1$
\begin{multline}\label{1.10}
\Phi_m \equiv \partial_t^{m-1} f\big|_{t=0} - (b\Phi'_{m-1} + \Phi'''_{m-1}) \\ -
\Bigl(\sum\limits_{l=0}^{m-1} g^{(l)}(u_0) \sum\limits_{n_1+\dots+n_l =m-1} c_{m-1,l,n_1,\dots,n_l} \Phi_{n_1} \dots \Phi_{n_l} \Bigr)'.
\end{multline}

For description of properties of the boundary data introduce the fractional Sobolev spaces $H^s(0,T)$, $s\in \mathbb R$, which are defined as the spaces of restrictions on the interval $(0,T)$  
of functions in $H^s(\mathbb R)$, where
$$
H^s(\mathbb R) = \{f(t): (1+\xi^2)^{s/2} \widehat f(\xi) \in L_2(\mathbb R)\}
$$
($\widehat f$ is the Fourier transform of the function $f$). 

Regarding the functions $\omega_j$ we always need the following conditions:
\begin{equation}\label{1.11}
\omega_j \in H^{3}(I),\quad \omega_j(0)= \omega'_j(0) = \omega_j(R) =0.
\end{equation}
Define the functions
\begin{equation}\label{1.12}
\psi_j(t) \equiv \int_I h(t,x)\omega_j(x)\,dx, 
\end{equation}
where $h$ is the function from equality \eqref{1.5}.

Introduce the function space
$$
\widetilde H^{k}(0,T) = \{\varphi\in H^k(0,T): \varphi^{(m)}(0) =0, \ 0\leq m \leq k-1\}.
$$
Obviously, $\|\varphi^{(k)}\|_{L_2(0,T)}$ is the equivalent norm in $\widetilde H^{k}(0,T)$ and is used further.

Now we can pass to the main result for the first inverse problem.

\begin{theorem}\label{T1.1}
Let 
\begin{itemize}
\item
the function $g$ satisfy condition \eqref{1.8}, $u_0 \in H^{3k}(I)$, $\mu_0, \nu_0 \in H^{k+1/3}(0,T)$, $h_0 \in M^k(Q_T)$ and
$\mu_0^{(m)}(0) = \Phi_{m}(0;u_0,h_0)$, $\nu_0^{(m)}(0) = \Phi_m(R;u_0,h_0)$ for $0\leq m\leq k-1$; 
\item
for $j=1, \, 2$ the functions $\omega_j$ satisfy condition \eqref{1.11}, 
$\varphi_j \in H^{k+1}(0,T)$  and
\begin{equation}\label{1.13}
\varphi_j^{(m)}(0) = \int_I \Phi_m(x;u_0,h_0) \omega_j(x) \,dx, \quad 0\leq m \leq k;
\end{equation}
\item
$h\in C^k([0,T]; L_2(I))$ and
\begin{equation}\label{1.14}
\Delta(t) \equiv 
\begin{vmatrix}
\psi_1(t) & \omega_1'(R)\\
\psi_2(t) & \omega_2'(R)
\end{vmatrix} 
\ne 0 \quad \forall t\in [0,T];
\end{equation}
\item either an arbitrary $\delta>0$ or an arbitrary $T_0>0$ is fixed.
\end{itemize}
Denote
\begin{multline}\label{1.15}
c_1 = c_1(u_0,\mu_0,\nu_0,h_0,\varphi_1,\varphi_2) \equiv \|u_0\|_{H^{3k}(I)} + \|\mu_0\|_{H^{k+1/3}(0,T)} + \|\nu_0\|_{H^{k+1/3}(0,T)} \\+
\|h_0\|_{M^k(Q_T)} + \|\varphi_1^{(k+1)}\|_{L_2(0,T)} + \|\varphi_2^{(k+1)}\|_{L_2(0,T)} .
\end{multline}
Then either for the fixed $\delta>0$ there exists $T_0>0$ or for the fixed $T_0>0$ there exists $\delta>0$ such that in both cases if $c_1\leq \delta$ and $T\in (0,T_0]$, there exist a unique pair of functions $(F, \nu_1)$, satisfying
$F\in \widetilde H^k(0,T)$, $\nu_1 \in H^k(0,T)$, $\nu_1^{(m)}(0) = \Phi_m'(R;u_0,h_0)$ for $0\leq m \leq k-1$, and the corresponding unique solution of problem \eqref{1.1}--\eqref{1.3} $u\in X^k(Q_T)$ satisfying conditions \eqref{1.4} for $j=1, \, 2$, where the function $f$ is given by formula \eqref{1.5}.
Moreover, the map
\begin{equation}\label{1.16}
\bigl(u_0, \mu_0, \nu_0, h_0, \varphi^{(k+1)}_1,\varphi^{(k+1)}_2 \bigr) \to (u, F,\nu_1)
\end{equation}
is Lipschitz continuous on the corresponding subset of the closed ball of the radius $\delta$ in the space $H^{3k}(I) \times \bigl(H^{k+1/3}(0,T)\bigr)^2 \times M^k(Q_T) \times \bigl(L_2(0,T)\bigr)^2$ into the space $X^k(Q_T)\times \bigl(H^k(0,T)\bigr)^2$.
\end{theorem}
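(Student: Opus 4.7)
The plan is to recast the inverse problem as a fixed-point equation in $X^k(Q_T)$. Given a candidate $v\in X^k(Q_T)$ in a small ball, I would first extract $F[v]\in\widetilde H^k(0,T)$ and $\nu_1[v]\in H^k(0,T)$ from a $2\times 2$ linear algebraic system produced by the overdetermination conditions, then solve the direct problem \eqref{1.1}--\eqref{1.3} with right-hand side $h_0+F[v]h$ and boundary trace $\nu_1[v]$ to obtain $u=\mathcal A(v)$. A fixed point of $\mathcal A$ yields the sought triple $(u,F,\nu_1)$: by construction the algebraic system is equivalent to the identity $(d/dt)\int_I u\omega_j\,dx=\varphi_j'(t)$ for $j=1,2$, and matching of initial values through \eqref{1.13} with $m=0$ then forces \eqref{1.4} after integration in $t$.

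To derive the system I would multiply \eqref{1.1} by $\omega_j$, integrate over $I$, and apply three integrations by parts; by \eqref{1.11} the only boundary contribution containing an unknown is $-\nu_1(t)\omega_j'(R)$, and the resulting system reads
\[
F(t)\psi_j(t)+\nu_1(t)\omega_j'(R)=\mathcal G_j(t;u),\qquad j=1,2,
\]
with $\mathcal G_j(t;u)$ affine in $\varphi_j'$, the known data, and zeroth-order integrals of $u$ and $g(u)$ against $\omega_j',\omega_j'',\omega_j'''$. Cramer's rule, legitimate by \eqref{1.14}, furnishes explicit formulae for $F[v]$ and $\nu_1[v]$. A direct computation using \eqref{1.10} shows inductively for $0\le m\le k-1$ that $\mathcal G_j^{(m)}(0;u_0)=\Phi_m'(R)\omega_j'(R)$ once \eqref{1.13} is invoked; this forces $F[v]^{(m)}(0)=0$ and $\nu_1[v]^{(m)}(0)=\Phi_m'(R)$, so $F[v]\in\widetilde H^k(0,T)$ and $\nu_1[v]$ carries the compatibility required by the forward theory (in particular $h_0+F[v]h\in M^k(Q_T)$).

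An auxiliary well-posedness result for the direct problem in $X^k(Q_T)$ (proved separately by energy estimates in the time-regular space using \eqref{1.8}) supplies a Lipschitz bound for $\|\mathcal A(v)\|_{X^k(Q_T)}$ in terms of $c_1$, $\|F[v]\|_{\widetilde H^k(0,T)}$, $\|\nu_1[v]\|_{H^k(0,T)}$, and a superlinear contribution $\mathcal N(\|v\|_{X^k(Q_T)})$ with $\mathcal N(0)=0$. Differentiating the Cramer formulae $k$ times via \eqref{1.9} yields a similar bound on $F[v]$ and $\nu_1[v]$; the decisive structural observation is that $\partial_t^m v\in C([0,T];H^1(I))\hookrightarrow L_\infty(Q_T)$ for $0\le m\le k-1$ by one-dimensional Sobolev embedding, so every $g^{(l)}(v)$ in \eqref{1.9} is bounded in $L_\infty(Q_T)$ with no growth restriction on $g$, and in each product at most one factor $\partial_t^k v$ need be placed in $L_2(Q_T)$ and paired via H\"older with $L_\infty$ factors. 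Contraction on a ball of radius of order $\delta$ in $X^k(Q_T)$ closes in both regimes of the hypothesis: for fixed $T_0$ and small $c_1$ the ball radius itself absorbs the superlinear term, while for fixed $\delta$ and small $T_0$ the factor $T^{1/2}$ present in the $L_2(0,T;\cdot)$ norms provides the damping. Lipschitz continuity of the map \eqref{1.16} follows from applying the same estimates to differences. The principal technical obstacle is the top-order time derivative of $g(v_1)-g(v_2)$: the term $g'(v_1)\partial_t^k(v_1-v_2)$ closes because $\partial_t^k(v_1-v_2)\in L_2(Q_T)$, but the term $(g'(v_1)-g'(v_2))\partial_t^k v_2$ requires a mean-value estimate on $g'$ combined with $L_\infty$ control of $v_1-v_2$, and both close only thanks to the $L_\infty(Q_T)$ embedding above, which is what narrowly justifies the smallness assumptions.
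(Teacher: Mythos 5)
Your reduction to the system $F(t)\psi_j(t)+\nu_1(t)\omega_j'(R)=\mathcal G_j(t;\cdot)$ is exactly the identity the paper uses (its formula \eqref{3.11} with $f_1=-g(u)$), but the way you build the iteration leaves a genuine gap in one of the two regimes claimed by the theorem. In your scheme $F[v],\nu_1[v]$ are obtained by Cramer's rule with $v$ inserted into $\mathcal G_j$, so the map $v\mapsto(F[v],\nu_1[v])\mapsto u$ contains a contribution that is \emph{linear} in $v$, coming from $\int_I v\,(b\omega_j'+\omega_j''')\,dx$. In the difference estimate this term produces a Lipschitz constant of order $c(T)T^{1/2}$ with no factor of the ball radius: it is not superlinear, so for fixed (possibly large) $T_0$ it is not absorbed by taking $c_1\le\delta$ small, contrary to your closing claim that ``the ball radius itself absorbs the superlinear term.'' Hence your contraction argument only yields the ``fixed $\delta$, small $T_0$'' half of the statement. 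The paper avoids this by a two-level construction: for each frozen $v$ it first solves the \emph{linear} inverse problem exactly and for arbitrary $T$ (Lemma~\ref{L4.1}, where the troublesome linear coupling is handled by a contraction in the exponentially weighted norm \eqref{4.12}, exploiting the Volterra structure of the linear solution map), and only then iterates in $v$; as a result the outer map $\Theta_1$ has Lipschitz constant $c(T)T^{1/2}\beta(r)r$, which is small either for small $T$ or for small $r\sim\delta$, giving both regimes. To rescue your one-level scheme you would need an analogous weighted-norm (Volterra) argument for the composed map, which you do not provide.

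There are also secondary omissions. First, your computation of $\mathcal G_j^{(m)}(0)$ and the conclusions $F[v]^{(m)}(0)=0$, $\nu_1[v]^{(m)}(0)=\Phi_m'(R)$ require restricting the iteration to candidates with $\partial_t^m v\big|_{t=0}=\Phi_m(\cdot;u_0,h_0)$, $0\le m\le k$ (the paper's set $X^k_{u_0,h_0}(Q_T)$ in \eqref{5.1}); without this constraint the compatibility does not come out and $h_0+F[v]h\notin M^k(Q_T)$ in general. Second, at each step you solve the full \emph{nonlinear} direct problem, invoking an auxiliary well-posedness result ``by energy estimates''; this is essentially Theorem~\ref{T1.4}, which in the paper is itself a consequence of the same contraction scheme (each iteration there solves only the linear problem of Theorem~\ref{T3.1} with the frozen source $-(g(v))_x$), so your argument quietly presupposes a result of comparable difficulty. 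Third, the theorem asserts uniqueness of $(F,\nu_1,u)$ among all of $\widetilde H^k\times H^k\times X^k(Q_T)$, not merely within the small ball where the fixed point lives; the paper closes this with a continuation argument in $T^*$ and estimate \eqref{2.3} on shrinking time intervals, a step absent from your proposal.
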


\begin{remark}\label{R1.4}
Because of the lack of smoothness with respect to $x$ for functions from the space $X^k(Q_T)$, solutions of problem \eqref{1.1}--\eqref{1.3} from this space are understood in the usual sense of the corresponding integral identity. The corresponding subset in the hypothesis of the theorem means the verification of the compatibility conditions.
\end{remark}

\begin{remark}\label{R1.5}
The smoothness properties $\mu_0,\nu_0 \in H^{k+1/3}(0,T)$, $\nu_1\in H^k(0,T)$ for the boundary data are natural, since if one considers the initial value linear problem
$$
v_t + v_{xxx} =0,\quad v\big|_{t=0} = v_0(x) \in H^{3k}(\mathbb R),
$$
then, by \cite{KPV}, its solution $v\in C(\mathbb R; H^{3k}(\mathbb R))$ (which can be easily constructed via the Fourier transform) for any $x\in \mathbb R$ satisfies the following relations: 
$$
\|D^{1/3} v(\cdot,x)\|_{H^k(\mathbb R)} = \|v_x(\cdot,x)\|_{H^k(\mathbb R)} = c\|v_0\|_{H^{3k}(\mathbb R)}.
$$
\end{remark}

Now we formulate the main result for the second inverse problem.

\begin{theorem}\label{T1.2}
Let 
\begin{itemize}
\item
the function $g$ satisfy condition \eqref{1.8}, $u_0 \in H^{3k}(I)$, $\mu_0, \nu_0 \in H^{k+1/3}(0,T)$, $\nu_1\in H^k(0,T)$, $h_0 \in M^k(Q_T)$, and $\mu_0^{(m)}(0) = \Phi_{m}(0;u_0,h_0)$, $\nu_0^{(m)}(0) = \Phi_m(R;u_0,h_0)$, $\nu_1^{(m)}(0) =  \Phi_m'(R;u_0,h_0)$ for $0\leq m\leq k-1$; 
\item
the function $\omega_0$ satisfy condition \eqref{1.11}, $\varphi_0 \in H^{k+1}(0,T)$  and
\begin{equation}\label{1.17}
\varphi_0^{(m)}(0) = \int_I \Phi_m(x;u_0,h_0) \omega_0(x) \,dx, \quad 0\leq m \leq k;
\end{equation}
\item
$h\in C^k([0,T]; L_2(I))$ and
\begin{equation}\label{1.18}
\psi_0(t) \ne 0 \quad \forall t\in [0,T];
\end{equation}
\item either an arbitrary $\delta>0$ or an arbitrary $T_0>0$ is fixed.
\end{itemize}
Denote
\begin{multline}\label{1.19}
c_2 = c_2(u_0,\mu_0,\nu_0,\nu_1,h_0,\varphi_0) \equiv \|u_0\|_{H^{3k}(I)} + \|\mu_0\|_{H^{k+1/3}(0,T)} + \|\nu_0\|_{H^{k+1/3}(0,T)} \\+ \|\nu_1\|_{H^k(0,T)}+
\|h_0\|_{M^k(Q_T)} + \|\varphi_0^{(k+1)}\|_{L_2(0,T)}.
\end{multline}
Then either for the fixed $\delta>0$ there exists $T_0>0$ or for the fixed $T_0>0$ there exists $\delta>0$ such that in both cases if $c_2\leq \delta$ and $T\in (0,T_0]$, there exist a unique  function $F \in \widetilde H^k(0,T)$ and the corresponding unique solution of problem \eqref{1.1}--\eqref{1.3} $u\in X^k(Q_T)$ satisfying condition \eqref{1.4} for $j=0$, where the function $f$ is given by formula \eqref{1.5}.
Moreover, the map
\begin{equation}\label{1.20}
\bigl(u_0, \mu_0, \nu_0,\nu_1, h_0, \varphi^{(k+1)}_0 \bigr) \to (u, F)
\end{equation}
is Lipschitz continuous on the corresponding subset of the closed ball of the radius $\delta$ in the space $H^{3k}(I) \times \bigl(H^{k+1/3}(0,T)\bigr)^2 \times H^k(0,T) \times M^k(Q_T) \times L_2(0,T)$ into the space $X^k(Q_T)\times H^k(0,T)$.
\end{theorem}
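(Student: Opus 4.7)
The plan is to reduce the inverse problem to a nonlinear fixed-point equation for the single unknown $F$, using the overdetermination condition \eqref{1.4} with $j=0$ to express $F$ explicitly in terms of the solution $u$ of the direct problem \eqref{1.1}--\eqref{1.3}. I first differentiate $\varphi_0(t) = \int_I u(t,x)\omega_0(x)\,dx$ in $t$, substitute \eqref{1.1}, and integrate by parts three times in the term $\int_I u_{xxx}\omega_0\,dx$. The conditions $\omega_0(0) = \omega_0(R) = 0$ annihilate the boundary contributions from the convective, top-order dispersive and nonlinear terms, while $\omega_0'(0) = 0$ removes the left-endpoint boundary term at the last step, leaving only $-\nu_1(t)\omega_0'(R)+\nu_0(t)\omega_0''(R)-\mu_0(t)\omega_0''(0)$ from $\int_I u_{xxx}\omega_0\,dx$. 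Since by \eqref{1.18} the function $\psi_0$ is bounded away from zero on $[0,T]$, this produces the explicit formula
\begin{equation*}
F(t) = \frac{1}{\psi_0(t)}\Bigl[\varphi_0'(t) + b\!\int_I\! u\omega_0'\,dx + \int_I\! u\omega_0'''\,dx + \int_I\! g(u)\omega_0'\,dx + \nu_1\omega_0'(R) - \nu_0\omega_0''(R) + \mu_0\omega_0''(0) - \!\int_I\! h_0\omega_0\,dx\Bigr].
\end{equation*}
Differentiating this identity $k$ more times in $t$ and applying the Leibniz rule and \eqref{1.9} yields a representation of $F^{(k)}$ as $\varphi_0^{(k+1)}/\psi_0$ plus a nonlinear functional of $F, F', \ldots, F^{(k-1)}$, of $\partial_t^m u$ for $0\leq m\leq k$, and of the boundary data. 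Owing to $u\in X^k(Q_T)$, $\mu_0,\nu_0\in H^{k+1/3}(0,T)$, $\nu_1\in H^k(0,T)$ and $h\in C^k([0,T];L_2(I))$, every such term sits in $L_2(0,T)$.

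Next I define the operator $\Lambda\colon \widetilde H^k(0,T)\to \widetilde H^k(0,T)$ by: given $F$, set $f = h_0 + Fh\in M^k(Q_T)$, solve the direct initial-boundary value problem \eqref{1.1}--\eqref{1.3} to get a unique $u\in X^k(Q_T)$ (invoking the well-posedness of the direct problem in $X^k(Q_T)$, which is to be established in the earlier, linear-theory part of the paper), and let $\Lambda(F)$ be the function produced by the formula above. The compatibility conditions \eqref{1.17}, combined with the recursion \eqref{1.10} for the $\Phi_m$, guarantee that $(\Lambda F)^{(m)}(0)=0$ for $0\leq m\leq k-1$ when the same compatibility is assumed for $F$, so $\Lambda$ preserves $\widetilde H^k(0,T)$.

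The contraction step works on a small ball $\{\|F^{(k)}\|_{L_2(0,T)}\leq\rho\}$ under either smallness of the data size $c_2$ or smallness of $T_0$. Two Lipschitz ingredients are used: Lipschitz dependence of $u\in X^k(Q_T)$ on $F$ for the direct problem, and Lipschitz continuity of the formula $u\mapsto \Lambda(F)$. The latter requires, via \eqref{1.9}, control of $g^{(l)}(u)\partial_t^{n_1}u\cdots \partial_t^{n_l}u$ for $l\leq 3k-1$; this is where $X^k(Q_T)$ plays its decisive role, since its functions are uniformly bounded on $\overline{Q_T}$ (as $\partial_t u\in C([0,T];L_2(I))$ together with $u_x\in L_2(0,T;H^1(I))$ embeds into $C(\overline{Q_T})$ for $k\geq 1$), so $g^{(l)}(u)$ is bounded with no restriction on the growth of $g$. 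Smallness of either $c_2$ or $T_0$ then produces a contraction constant strictly less than one (in the small-$T_0$ regime the contraction gains a factor $T_0^{1/2}$ from $L_2$-in-time norms).

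The main obstacle is precisely the unrestricted treatment of $g$: propagating $k$ time derivatives through the polynomial expression \eqref{1.9} while keeping all factors in $L_2(Q_T)$ demands the uniform boundedness and time-continuity of $u$ provided by $X^k(Q_T)$, and forces the compatibility bookkeeping between \eqref{1.10}, \eqref{1.17} and the hypotheses on $\mu_0,\nu_0,\nu_1$ to be carried out carefully. Once the contraction is in place, existence and uniqueness of $(u,F)$ follow by Banach's theorem, and Lipschitz continuity of the map \eqref{1.20} on the indicated closed ball is a direct consequence of the contraction estimates applied to the difference of two data sets.
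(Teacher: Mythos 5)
Your reduction is structurally different from the paper's, and it has a genuine gap in one of the two regimes the theorem claims. The paper iterates on the solution itself: for $v$ in $X^k_{u_0,h_0}(Q_T)$ it freezes the nonlinearity $g(v)$, solves the \emph{linear} inverse problem exactly through the operator $\Gamma_2$ of Lemma~\ref{L4.2}, and the difference $\Theta_2 v-\Theta_2\widetilde v$ is then driven solely by $g(v)-g(\widetilde v)$, whose Lipschitz constant $c(T)T^{1/2}\beta(r)r$ is small in \emph{both} regimes (small $r$ forced by small $c_2$, or small $T$). You instead iterate on $F$, solving the full nonlinear direct problem at each step. Your map $F\mapsto\Lambda F$ then contains the term $\psi_0^{-1}\int_I u(F)\,(b\omega_0'+\omega_0''')\,dx$, and $u(F)$ depends on $F$ through the \emph{linear} part of the direct problem; the Lipschitz constant of this contribution is of order $c(T)T^{1/2}$ times the norm of the solution operator and does \emph{not} shrink when the data are small. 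Hence your plain ball contraction closes only in the small-$T_0$ regime; for fixed $T_0$ and small $c_2$ the asserted contraction fails as stated. This is exactly why the paper's Lemma~\ref{L4.2} does not use smallness at all but instead exploits the Volterra structure of the linear part via the exponentially weighted norm \eqref{4.12} with large $\gamma$. To repair your scheme you would either have to run the $F$-iteration in such a weighted norm (treating the linear-in-$F$ part by the Volterra argument and only the nonlinear remainders by smallness), or restructure the iteration as the paper does.

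Two further points. First, well-posedness of the nonlinear direct problem in $X^k(Q_T)$ is not ``earlier linear theory'': it is Theorem~\ref{T1.4}, proved by the same contraction machinery and itself requiring smallness of $c_4$, which includes $\|Fh\|_{M^k(Q_T)}$; so your nested argument must prove that theorem first, coordinate the radius of the $F$-ball with its smallness threshold, and establish (not merely cite) the Lipschitz dependence of $u$ on $F$. Second, your explicit formula for $F$ has sign errors: integrating by parts as in \eqref{3.11} gives $F\psi_0=\varphi_0'-\nu_1\omega_0'(R)-\mu_0\omega_0''(0)+\nu_0\omega_0''(R)-\int_I u\,(b\omega_0'+\omega_0''')\,dx-\int_I g(u)\,\omega_0'\,dx-\int_I h_0\,\omega_0\,dx$, i.e.\ the opposite signs of most of your terms; this is harmless for the strategy but should be corrected. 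Your compatibility bookkeeping (that $\Lambda$ preserves $\widetilde H^k(0,T)$) and the use of the uniform bound on $X^k(Q_T)$ to handle the unrestricted growth of $g$ are in line with the paper (cf.\ \eqref{5.32} and Lemma~\ref{L2.1}).
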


For the third inverse problem the result is the following.

\begin{theorem}\label{T1.3}
Let 
\begin{itemize}
\item
the function $g$ satisfy condition \eqref{1.8}, $u_0 \in H^{3k}(I)$, $\mu_0, \nu_0 \in H^{k+1/3}(0,T)$, $f \in M^k(Q_T)$ and $\mu_0^{(m)}(0) = \Phi_{m}(0;u_0,f)$, $\nu_0^{(m)}(0) = \Phi_m(R;u_0,f)$ for $0\leq m\leq k-1$; 
\item
the function $\omega_0$ satisfy condition \eqref{1.11}, $\varphi_0 \in H^{k+1}(0,T)$  and equalities \eqref{1.17} hold for $h_0\equiv f$;
\item
\begin{equation}\label{1.21}
\omega'_0(R) \ne 0;
\end{equation}
\item either an arbitrary $\delta>0$ or an arbitrary $T_0>0$ is fixed.
\end{itemize}
Denote
\begin{multline}\label{1.22}
c_3 = c_3(u_0,\mu_0,\nu_0,f,\varphi_0) \equiv \|u_0\|_{H^{3k}(I)} + \|\mu_0\|_{H^{k+1/3}(0,T)} + \|\nu_0\|_{H^{k+1/3}(0,T)} \\+ 
\|f\|_{M^k(Q_T)} + \|\varphi_0^{(k+1)}\|_{L_2(0,T)}.
\end{multline}
Then either for the fixed $\delta>0$ there exists $T_0>0$ or for the fixed $T_0>0$ there exists $\delta>0$ such that in both cases if $c_3\leq \delta$ and $T\in (0,T_0]$, there exist a unique  function $\nu_1 \in H^k(0,T)$, satisfying $\nu_1^{(m)}(0) = \Phi_m'(R;u_0,h_0)$ for $0\leq m \leq k-1$, and the corresponding unique solution of problem \eqref{1.1}--\eqref{1.3} $u\in X^k(Q_T)$ satisfying condition \eqref{1.4} for $j=0$.
Moreover, the map
\begin{equation}\label{1.23}
\bigl(u_0, \mu_0, \nu_0, f, \varphi^{(k+1)}_0 \bigr) \to (u, \nu_1)
\end{equation}
is Lipschitz continuous on the corresponding subset of the closed ball of the radius $\delta$ in the space $H^{3k}(I) \times \bigl(H^{k+1/3}(0,T)\bigr)^2 \times M^k(Q_T) \times L_2(0,T)$ into the space $X^k(Q_T)\times H^k(0,T)$.
\end{theorem}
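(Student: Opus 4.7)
The plan is to reduce the inverse problem to a fixed-point equation for $\nu_1$ alone. Testing equation (1.1) against $\omega_0(x)$ and integrating in $x$, three integrations by parts on $\int_I u_{xxx}\omega_0\,dx$ combined with conditions (1.11) kill all boundary terms except $-\omega_0'(R)\nu_1(t)$, while the convective and nonlinear terms integrate into interior integrals against $\omega_0'$. Replacing $\int_I u_t\omega_0\,dx$ by $\varphi_0'(t)$ via (1.4) with $j=0$ then gives
\begin{multline*}
\omega_0'(R)\nu_1(t) = \varphi_0'(t) - b\int_I u\omega_0'\,dx + \nu_0(t)\omega_0''(R) - \mu_0(t)\omega_0''(0) \\ - \int_I u\omega_0'''\,dx - \int_I g(u)\omega_0'\,dx - \int_I f\omega_0\,dx,
\end{multline*}
which, by (1.21), determines $\nu_1$ as a functional of $u$ and the data.

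Next I would fix a reference function $p\in H^k(0,T)$ matching the prescribed Taylor data $p^{(m)}(0)=\Phi_m'(R;u_0,f)$ for $0\le m\le k-1$, and parametrize the unknown by $\nu_1=p+\eta$ with $\eta\in\widetilde H^k(0,T)$. Given any such $\nu_1$, the forward problem (1.1)--(1.3) --- whose well-posedness in $X^k(Q_T)$ is the paper's direct-problem theorem --- supplies a unique $u=u[\nu_1]\in X^k(Q_T)$, and I define $\Lambda(\nu_1)$ by plugging $u$ into the right-hand side of the identity above (divided by $\omega_0'(R)$). The compatibility conditions (1.17) on $\varphi_0$ together with $\partial_t^m u(0,\cdot)=\Phi_m(\cdot;u_0,f)$ from (1.10) guarantee that $\Lambda(\nu_1)^{(m)}(0)=\Phi_m'(R;u_0,f)$ for $0\le m\le k-1$, so $\Lambda$ leaves the affine space $p+\widetilde H^k(0,T)$ invariant.

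The core of the argument is showing that on a small ball of radius proportional to $c_3$ in this space, $\Lambda$ is a contraction. Differentiating the identity $k$ times in $t$, the $\varphi_0'$ piece accounts for the norm $\|\varphi_0^{(k+1)}\|_{L_2(0,T)}$ in (1.22). For the nonlinearity, formula (1.9) expresses $\partial_t^k g(u)$ as a sum of multilinear products $g^{(l)}(u)\partial_t^{n_1}u\cdots\partial_t^{n_l}u$; since $X^k(Q_T)$ embeds each $\partial_t^m u$, $0\le m\le k$, into $C([0,T];L_2(I))\cap L_2(0,T;H^1(I))$, and the latter further into $L_2(0,T;L_\infty(I))$ by the one-dimensional Sobolev embedding, every such product is controlled by $\|u\|_{X^k(Q_T)}^l$, while the smallness hypothesis keeps $u$ in a bounded set on which $g^{(l)}$ is uniformly bounded. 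Combined with the Lipschitz estimate $\|u[\nu_1]-u[\nu_1^\sharp]\|_{X^k(Q_T)}\le C(\delta,T_0)\|\nu_1-\nu_1^\sharp\|_{H^k(0,T)}$ coming from the direct-problem theorem, the overall Lipschitz constant of $\Lambda$ is forced below one by taking either $c_3\le\delta$ small (for fixed $T_0$) or $T\in(0,T_0]$ small (for fixed $\delta$). Banach's fixed-point theorem then yields the unique $\nu_1$ and corresponding $u$; Lipschitz continuity of the map (1.23) follows by applying the same differences estimates to two data sets.

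The main obstacle I expect is controlling the contraction constant through the $k$ time differentiations of the identity: each term involving $\partial_t^k$ must be bounded in $L_2(0,T)$ by $\|u\|_{X^k(Q_T)}$ times a factor that genuinely vanishes with $\delta$ or $T_0$, and for the multilinear chain in (1.9) this requires the extra $u$-factors to compensate for the absence of any growth bound on $g$, which is precisely the feature allowing the elimination of the growth restriction discussed in the introduction.
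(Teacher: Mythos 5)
Your reduction identity for $\nu_1$ is correct (it is exactly the $m=0$ case of the paper's overdetermination formula \eqref{3.11} specialized to $\omega_0$), and your outer fixed point on $\nu_1$ — solving the full nonlinear direct problem at each step — is a genuinely different architecture from the paper, which instead fixes a point in $u$: for each candidate $v$ it solves the \emph{linear} inverse problem exactly through the operator $\Gamma_3$ of Lemma~\ref{L4.3} and puts the nonlinearity $-g(v)$ into the right-hand side (formulas \eqref{5.35}, \eqref{5.36}). This difference is not cosmetic, and it is where your argument has a genuine gap.

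The gap is in the contraction claim for the regime ``fixed $T_0$, small $\delta$''. Writing $\Lambda(\nu_1)-\Lambda(\nu_1^\sharp)$ from your identity, the dominant terms are the \emph{linear} ones, $\frac1{\omega_0'(R)}\int_I\bigl(u[\nu_1]-u[\nu_1^\sharp]\bigr)\bigl(b\omega_0'+\omega_0'''\bigr)\,dx$, whose $H^k(0,T)$-norm is bounded by $c(T_0)\,T_0^{1/2}\,L\,\|\nu_1-\nu_1^\sharp\|_{H^k(0,T)}$, where $L$ is the Lipschitz constant of the direct solution map $\nu_1\mapsto u[\nu_1]$. Neither $L$ nor this product shrinks as $\delta\to0$: only the contribution of $g$ carries a factor $\beta(r)r$ that is small with the data. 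Hence on a fixed interval your $\Lambda$ need not be a contraction no matter how small $c_3$ is, and your scheme proves only the small-time half of Theorem~\ref{T1.3}, not the small-data half. This is precisely the obstruction the paper circumvents: the entire linear inverse problem is solved \emph{exactly and for arbitrary $T$ and data size} in Lemma~\ref{L4.3}, via a contraction in the exponentially weighted norm \eqref{4.12} with $\gamma$ large (using the Volterra structure, estimates \eqref{4.40}--\eqref{4.41}), so that in the nonlinear iteration \eqref{5.35}--\eqref{5.36} the contraction factor \eqref{5.39} is $c(T)T^{1/2}\beta(r)r$, small when either $T_0$ or $r\sim\delta$ is small. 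To rescue your approach you would have to run the outer iteration in the weighted norm as well, which in turn requires a local-in-time (causal) Lipschitz estimate $\|\partial_t^m(u[\nu_1]-u[\nu_1^\sharp])(t,\cdot)\|_{L_2(I)}\le C\|\nu_1-\nu_1^\sharp\|_{H^k(0,t)}$ for the nonlinear direct problem; you neither state nor prove this. A secondary point: Banach's theorem in your ball gives uniqueness only among small $(\nu_1,u)$, whereas the theorem asserts uniqueness of the solution in $X^k(Q_T)$; the paper obtains this by a separate continuation argument at the end of the proof of Theorem~\ref{T1.1}, which your proposal omits.
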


The methods, developed for the inverse problems, can be also applied for the direct initial-boundary value problem itself.

\begin{theorem}\label{T1.4}
Let 
\begin{itemize}
\item
the function $g$ satisfy condition \eqref{1.8}, $u_0 \in H^{3k}(I)$, $\mu_0, \nu_0 \in H^{k+1/3}(0,T)$, $\nu_1\in H^k(0,T)$, $f \in M^k(Q_T)$, $\mu_0^{(m)}(0) = \Phi_{m}(0;u_0,f)$, $\nu_0^{(m)}(0) = \Phi_m(R;u_0,f)$, $\nu_1^{(m)}(0) =  \Phi_m'(R;u_0,f)$ for $0\leq m\leq k-1$; 
\item
either an arbitrary $\delta>0$ or an arbitrary $T_0>0$ is fixed.
\end{itemize}
Denote
\begin{multline}\label{1.24}
c_4 = c_4(u_0,\mu_0,\nu_0,\nu_1,f) \equiv \|u_0\|_{H^{3k}(I)} + \|\mu_0\|_{H^{k+1/3}(0,T)} + \|\nu_0\|_{H^{k+1/3}(0,T)} \\+ \|\nu_1\|_{H^k(0,T)}+
\|f\|_{M^k(Q_T)}.
\end{multline}
Then either for the fixed $\delta>0$ there exists $T_0>0$ or for the fixed $T_0>0$ there exists $\delta>0$ such that in both cases if $c_4\leq \delta$ and $T\in (0,T_0]$, there exists a    unique solution of problem \eqref{1.1}--\eqref{1.3} $u\in X^k(Q_T)$.
Moreover, the map
\begin{equation}\label{1.25}
\bigl(u_0, \mu_0, \nu_0,\nu_1,f\bigr) \to u
\end{equation}
is Lipschitz continuous on the corresponding subset of the closed ball of the radius $\delta$ in the space $H^{3k}(I) \times \bigl(H^{k+1/3}(0,T)\bigr)^2 \times H^k(0,T) \times M^k(Q_T)$ into the space $X^k(Q_T)$.
\end{theorem}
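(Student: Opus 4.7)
The plan is to prove Theorem~\ref{T1.4} by a standard contraction argument applied to a linearization of the nonlinear term, with the hard analytic work concentrated in the linear theory for the IBVP \eqref{1.1}--\eqref{1.3} in the time-regular space $X^k(Q_T)$. The key structural fact that makes the argument go through with no growth assumption on $g$ is that for $k\geq 1$ the space $X^k(Q_T)$ embeds continuously into $C(\overline{Q_T})$: indeed, both $u$ and $u_t$ lie in $L_2(0,T;H^1(I))$, so $u\in H^1(0,T;H^1(I))\hookrightarrow C([0,T];H^1(I))\hookrightarrow C(\overline{Q_T})$, and similarly $\partial_t^m u\in C(\overline{Q_T})$ for $0\leq m\leq k-1$. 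Combined with $g\in C^{3k-1}(\mathbb R)$, this gives pointwise control of every $g^{(l)}(u)$ appearing in formula \eqref{1.9}.

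The first step is to establish the linear analogue: given $\widetilde f\in M^k(Q_T)$, $u_0\in H^{3k}(I)$, $\mu_0,\nu_0\in H^{k+1/3}(0,T)$, $\nu_1\in H^k(0,T)$ satisfying the compatibility conditions $\mu_0^{(m)}(0)=\Phi_m(0;u_0,\widetilde f)$, $\nu_0^{(m)}(0)=\Phi_m(R;u_0,\widetilde f)$, $\nu_1^{(m)}(0)=\Phi_m'(R;u_0,\widetilde f)$ for $0\leq m\leq k-1$, the linear IBVP $v_t+bv_x+v_{xxx}=\widetilde f$ admits a unique solution $v\in X^k(Q_T)$ with
\begin{equation*}
\|v\|_{X^k(Q_T)}\leq C\bigl(\|u_0\|_{H^{3k}(I)}+\|\mu_0\|_{H^{k+1/3}(0,T)}+\|\nu_0\|_{H^{k+1/3}(0,T)}+\|\nu_1\|_{H^k(0,T)}+\|\widetilde f\|_{M^k(Q_T)}\bigr),
\end{equation*}
where $C$ depends on $T$ but stays bounded on $(0,T_0]$. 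This is obtained by differentiating the equation $m$ times in time, $0\leq m\leq k$, solving the resulting hierarchy of linear KdV IBVPs for $\partial_t^m v$ with source $\partial_t^m \widetilde f$, initial datum $\Phi_m$, and boundary data $\mu_0^{(m)},\nu_0^{(m)},\nu_1^{(m)}$, and invoking the standard linear theory with the Kato-type boundary regularity recalled in Remark~\ref{R1.5}.

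Next I would set up the iteration. Define the map $\Lambda$ by $\Lambda(w)=u$, where $u\in X^k(Q_T)$ is the unique solution of the linear IBVP with source $f-(g(w))_x$ and the prescribed initial/boundary data; fixed points of $\Lambda$ are precisely the solutions of \eqref{1.1}--\eqref{1.3}. Using \eqref{1.9} and the embedding $X^k\hookrightarrow L_\infty(Q_T)$, each $\partial_t^m g(w)$ is a sum of products of a bounded factor $g^{(l)}(w)$ with products of $\partial_t^{n_j}w$, at most one of which can carry $n_j=k$. One such factor is $L_2(Q_T)$, the remaining factors are $L_\infty$, so $\partial_t^m (g(w))\in L_2(Q_T)$ with the right norm bound; the trace $\partial_t^m(g(w))|_{t=0}$ is a polynomial expression in $\Phi_0,\dots,\Phi_{k-1}$ and hence lies in $H^{3(k-m)}(I)\subset H^{3(k-m-1)}(I)$, verifying the compatibility pattern of \eqref{1.7} for $f-(g(w))_x\in M^k(Q_T)$. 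The Lipschitz analogue, for $g(w_1)-g(w_2)$, is proved by the same bookkeeping combined with the mean value theorem. These estimates are polynomial in $\|w\|_{X^k(Q_T)}$ and carry a factor tending to $0$ either as $T\to 0$ (from $L_2(Q_T)$-type norms of lower-order terms) or as the data size shrinks, which is exactly the mechanism giving the dichotomy in the hypothesis. On the closed ball $B_\rho\subset X^k(Q_T)$ of functions meeting the prescribed traces, with $\rho$ chosen of order $c_4$ times the linear constant, $\Lambda$ therefore maps $B_\rho$ into itself and is a contraction; the fixed point gives existence and uniqueness, and subtracting fixed-point equations for different data and reapplying the linear estimate yields the Lipschitz continuity of \eqref{1.25}.

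The main obstacle is the linear step, and within it the correct handling of the fractional boundary regularity $H^{k+1/3}(0,T)$ and the propagation of compatibility through the time-differentiated hierarchy; once this is in hand, the nonlinear analysis is purely algebraic manipulation of \eqref{1.9} plus the embedding $X^k\hookrightarrow L_\infty$. A secondary delicate point is checking that $\partial_t^m(g(w))|_{t=0}$ really equals a polynomial in the $\Phi_n(u_0,f)$, and in particular does not see the unknown ``top'' time derivative of $w$ at $t=0$, so that the $M^k$ compatibility relations for $f-(g(w))_x$ match exactly with those assumed for $f$; this is what forces the appearance of the auxiliary functions $\Phi_m$ in \eqref{1.10} and ultimately ensures that the iteration lives inside the correct closed affine subspace of $X^k(Q_T)$.
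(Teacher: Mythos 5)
Your overall scheme coincides with the paper's: a contraction for the map $w\mapsto S(u_0,\mu_0,\nu_0,\nu_1,\cdot)$ with source $f-(g(w))_x$, restricted to the affine subset of $X^k(Q_T)$ whose time traces at $t=0$ equal $\Phi_m(\cdot;u_0,f)$, with the linear theory obtained through the time-differentiated hierarchy and the identity $\Phi_m(\cdot;u_0,f)=\widetilde\Phi_m(\cdot;u_0,f-(g(w))_x)$ guaranteeing compatibility. However, there is a concrete gap in how you feed the nonlinearity into the linear solver. You claim $f-(g(w))_x\in M^k(Q_T)$, but by \eqref{1.7} this requires $\partial_t^m\bigl((g(w))_x\bigr)\in L_2(Q_T)$ for $0\le m\le k$, whereas your bookkeeping only estimates $\partial_t^m g(w)$ in $L_2(Q_T)$ (one $L_2$ factor times $L_\infty$ factors) --- these are different objects, since $(g(w))_x=g'(w)w_x$ and $X^k(Q_T)$ gives no spatial smoothness beyond $H^1$. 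Even if one pushes through estimates for $\partial_t^m\bigl(g'(w)w_x\bigr)$, the top-order term $g'(w)\,\partial_t^m w_x$ can only be bounded by $\beta(r)r\,\|\partial_t^m w_x\|_{L_2(Q_T)}$, which carries no factor $T^{1/2}$; hence in the regime ``$\delta$ arbitrary (so $r$ large), $T_0$ small'' the contraction constant does not become small, and that half of the dichotomy in the theorem is lost. The paper avoids this precisely by formulating the linear theory for right-hand sides in divergence form: Definition~\ref{D3.1} and Theorem~\ref{T3.1} admit $f=f_0+f_{1x}$ with $f_1=-g(w)$ measured in $M^k_1(Q_T)$ (see \eqref{3.3}, \eqref{3.9}), so that only $\|\partial_t^m g(w)\|_{L_2(Q_T)}$ is needed, and Lemma~\ref{L2.2} supplies this bound with the crucial factor $T^{1/2}$ (estimate \eqref{2.3}), while Lemma~\ref{L2.3} supplies the trace part $\partial_t^m(g(w))_x\big|_{t=0}\in H^{3(k-m-1)}(I)$. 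Your linear step, stated only for sources in $M^k(Q_T)$, must therefore be widened to divergence-form sources (or your nonlinear estimates replaced accordingly) for the argument to deliver the stated result, in particular estimate \eqref{5.42}.

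A secondary omission: the fixed-point argument yields uniqueness only within the chosen ball of $X^k_{u_0,h_0}(Q_T)$, while the theorem asserts uniqueness in all of $X^k(Q_T)$. The paper closes this by a continuation argument (take two solutions bounded by some $r$, let $T^*$ be the last coincidence time, and run the contraction estimate with the factor $(T_0-T^*)^{1/2}$ on a short interval beyond $T^*$ to reach a contradiction); some such step should be added to your proposal. The remaining ingredients --- the embedding $X^k(Q_T)\hookrightarrow C(\overline Q_T)$ for the first $k-1$ time derivatives, the observation that $\partial_t^m(g(w))\big|_{t=0}$ for $m\le k-1$ depends only on $\Phi_0,\dots,\Phi_{k-1}$, and the Lipschitz dependence via subtraction of the fixed-point equations --- match the paper's Lemmas~\ref{L2.1}--\ref{L2.4} and the concluding estimates of Section~\ref{S5}.
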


Under certain additional assumptions on the function $f$ the solution of the initial-boundary value problem possesses also smoothness with respect to $x$.

\begin{corollary}\label{C1.1}
Let the functions $g$, $u_0$, $\mu_0$, $\nu_0$, $\nu_1$ and $f$ satisfy the hypotheses of Theorem~\ref{T1.4} and, in addition,
$$
\partial_t^m f\in C([0,T];H^{3(k-m-1)}(I)) \cap L_2(0,T;H^{3(k-m)-2}(I),\quad 0\leq m\leq k-1.
$$
Let $u\in X^k(Q_T)$ be the solution to problem \eqref{1.1}--\eqref{1.3} for certain $T>0$, then
$$
\partial_t^m u \in C([0,T];H^{3(k-m)}(I)) \cap L_2(0,T;H^{3(k-m)+1}(I)), \quad 0\leq m\leq k-1.
$$ 
\end{corollary}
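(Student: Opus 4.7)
I would establish the strengthened regularity
$$
(\ast_m):\quad \partial_t^m u \in C([0,T]; H^{3(k-m)}(I)) \cap L_2(0,T; H^{3(k-m)+1}(I))
$$
for every $0 \le m \le k$, proving $(\ast_0)$ first (the crux) and then deducing $(\ast_m)$ for $1 \le m \le k-1$ by a forward induction. The base case $m = k$ is already contained in the definition of $X^k(Q_T)$, and the conclusion of the corollary is exactly $(\ast_m)$ for $0 \le m \le k-1$.

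The key device for $(\ast_0)$ is to read \eqref{1.1} at each fixed $t$ as the linear third-order boundary value problem in $x$
$$
u_{xxx} + (b + g'(u))u_x = f - u_t,\quad u(t,0) = \mu_0(t),\ u(t,R) = \nu_0(t),\ u_x(t,R) = \nu_1(t).
$$
Once the coefficient $g'(u)$ is smooth enough in $x$, this problem is uniquely solvable and its solution operator gains three Sobolev orders. By iteratively differentiating \eqref{1.1} in $x$ and using the equation differentiated in $t$ to eliminate every mixed derivative $\partial_t \partial_x^j u$ that appears (each application of $\partial_t$ trades one time derivative for three $x$-derivatives), one propagates the $X^k$ time regularity together with the hypothesized extra $x$-regularity of $\partial_t^m f$ into $u \in C([0,T]; H^{3k}(I)) \cap L_2(0,T; H^{3k+1}(I))$. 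The compatibility conditions $\mu_0^{(m)}(0) = \Phi_m(0;u_0,f)$, $\nu_0^{(m)}(0) = \Phi_m(R;u_0,f)$, $\nu_1^{(m)}(0) = \Phi_m'(R;u_0,f)$ ensure that traces at $t = 0$ match so that continuity in $t$ is not lost.

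Once $(\ast_0)$ is in hand, for $0 \le m \le k-2$ the passage $(\ast_m) \Rightarrow (\ast_{m+1})$ follows by applying $\partial_t^m$ to \eqref{1.1}:
$$
\partial_t^{m+1} u = \partial_t^m f - b\partial_t^m u_x - \partial_t^m u_{xxx} - \partial_t^m\bigl((g(u))_x\bigr).
$$
Each term on the right is easily checked to lie in $C([0,T]; H^{3(k-m-1)}(I)) \cap L_2(0,T; H^{3(k-m-1)+1}(I))$: the first two terms by the hypothesis on $f$ and by $(\ast_m)$, respectively; the nonlinear term by expanding via \eqref{1.9} and invoking the Sobolev algebra $H^s(I) \cdot H^s(I) \hookrightarrow H^s(I)$ for $s \ge 1$ in one dimension, together with the Moser-type estimate $\|g^{(l)}(u)\|_{H^s} \le C(\|u\|_{L^\infty})\|u\|_{H^s}$, legitimate because $g \in C^{3k-1}$ by \eqref{1.8} and because each $\partial_t^n u$ occurring ($n \le m$) is already controlled by $(\ast_n)$.

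The main obstacle is the $m = 0$ bootstrap, which must convert the mere $k$ time derivatives supplied by $X^k(Q_T)$ into $3k$ spatial derivatives of $u$ while keeping control of the nonlinearity $g(u)$. The difficulty is to balance uniform-in-$t$ and $L^2$-in-$t$ Sobolev bounds simultaneously: at each substitution some factors are in $L^\infty_t H^s_x$ and others only in $L^2_t H^s_x$, so the products must be arranged to land in the required space. Here the $X^k$ initial-data regularities $\partial_t^m u|_{t=0} \in H^{3(k-m)}(I)$, combined with integration from $t = 0$, provide the needed $L^\infty$-in-$t$ control at low orders of time differentiation.
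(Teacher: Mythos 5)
Your forward step $(\ast_m)\Rightarrow(\ast_{m+1})$ is sound, but the overall plan is organized in the wrong direction, and the step you yourself call the crux is exactly where the gap sits. To prove $(\ast_0)$, i.e. $u\in C([0,T];H^{3k}(I))\cap L_2(0,T;H^{3k+1}(I))$, you must place the right-hand side of $u_{xxx}=f-u_t-bu_x-\bigl(g(u)\bigr)_x$ in $C([0,T];H^{3k-3}(I))$, and that already requires $u_t\in C([0,T];H^{3(k-1)}(I))$, i.e. essentially $(\ast_1)$ --- the very statement you postpone to the later forward induction; $(\ast_1)$ in turn needs $(\ast_2)$, and so on up to $(\ast_k)$, the only level supplied for free by $X^k(Q_T)$. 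So ``prove $(\ast_0)$ first, then go forward'' is circular as stated: carried out honestly, the $(\ast_0)$ step must contain within it a descending chain from $m=k$ down to $m=0$, which is precisely the paper's proof (induction on $k-m$: at each level one reads $\partial_t^j\partial_x^{3(k-m)+i}u$, $i=0,1$, from the equation as in \eqref{5.46}, controls the right-hand side by the level-$(m+1)$ regularity together with the hypotheses on $\partial_t^jf$, and uses the initial-data regularity $\partial_t^ju\big|_{t=0}\in H^{3(k-j)}(I)$ built into $X^k(Q_T)$ to upgrade $L_2$-in-$t$ information to continuity in $t$); once that descending induction is done, all of $(\ast_m)$, $0\leq m\leq k-1$, are already in hand and your forward induction is redundant. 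Your description of the mechanism is also inverted in direction: substituting the $t$-differentiated equation to ``eliminate'' $\partial_t\partial_x^ju$ introduces three additional $x$-derivatives of $u$, which at that stage are not yet controlled; the bootstrap needs the opposite trade, solving the equation for the highest $x$-derivatives in terms of one extra $t$-derivative, which is what the $X^k$ regularity can absorb.

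A smaller but genuine flaw: the appeal to unique solvability of the two-point problem $u_{xxx}+(b+g'(u))u_x=f-u_t$ with boundary conditions \eqref{1.3} is both unjustified and unnecessary. For a variable coefficient in the first-order term this boundary value problem can have a nontrivial kernel (already $u'''+\lambda u'=0$ with $\lambda=(2\pi/R)^2$ admits a nonzero solution satisfying $u(0)=u(R)=u_x(R)=0$), and no argument is offered; more to the point, in one space dimension no solution operator is needed at all --- one simply reads $u_{xxx}$ off the equation and gains regularity directly, as the paper does. With the plan reordered into the descending induction, your remaining ingredients (the expansion \eqref{1.9} with Moser-type estimates for the nonlinearity, the hypotheses on $\partial_t^mf$, and the role of the initial traces for continuity in $t$) are the correct ones.
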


The paper is organized as follows. In Section~\ref{S2} certain auxiliary nonlinear estimates are proved. Section~\ref{S3} contains results on auxiliary initial-boundary value problem for the corresponding linear equation. In Section~\ref{S4} is devoted to inverse problems for this linear equation. In Section~\ref{S5} the main nonlinear theorems are proved.

\section{Auxiliary nonlinear estimates}\label{S2}

\begin{lemma}\label{L2.1}
There exists a constant $c=c(R)$ such that for any $u\in X^k(Q_T)$
\begin{equation}\label{2.1}
\sup\limits_{(t,x)\in Q_T} |\partial_t^m u(t,x)| \leq c \|u\|_{X^k(Q_T)},\quad 0\leq m\leq k-1.
\end{equation}
\end{lemma}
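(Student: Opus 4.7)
The plan is to reduce the pointwise-in-$(t,x)$ bound on $\partial_t^m u$ to (i) controlling $\partial_t^m u$ in $C([0,T];H^1(I))$ and (ii) applying the one-dimensional Sobolev embedding $H^1(I)\hookrightarrow C(\overline I)$, which gives the $R$-dependent constant. Fix $m$ with $0\le m\le k-1$ and write $v:=\partial_t^m u$, $w:=\partial_t^{m+1} u$. From the definition of $X^k(Q_T)$ (note $m+1\le k$), both $v$ and $w$ lie in $C([0,T];L_2(I))\cap L_2(0,T;H^1(I))$; moreover $v(0,\cdot)\in H^{3(k-m)}(I)\subset H^1(I)$ since $3(k-m)\ge 3$, and its $H^{3(k-m)}(I)$-norm is an explicit term in $\|u\|_{X^k(Q_T)}$.

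Next I would show $v\in C([0,T];H^1(I))$ with a quantitative bound. Since $v,w\in L_2(0,T;H^1(I))$ and $w=\partial_t v$ in the sense of distributions, a standard calculation (justified first on smooth approximants and then passed to the limit) yields
\begin{equation*}
\|v(t)\|_{H^1(I)}^2=\|v(0)\|_{H^1(I)}^2+2\int_0^t\bigl(v(s),w(s)\bigr)_{H^1(I)}\,ds,\qquad t\in[0,T],
\end{equation*}
so by the Cauchy--Schwarz inequality
\begin{equation*}
\max_{t\in[0,T]}\|v(t)\|_{H^1(I)}^2\le \|v(0)\|_{H^1(I)}^2+2\|v\|_{L_2(0,T;H^1(I))}\|w\|_{L_2(0,T;H^1(I))}.
\end{equation*}
Each term on the right-hand side is controlled by $\|u\|_{X^k(Q_T)}$: the initial term by the $H^{3(k-m)}(I)$-summand already mentioned, and each $L_2(0,T;H^1(I))$-norm by the sum of $\max_{t}\|\partial_t^j u(\cdot,t)\|_{L_2(I)}$ and $\|\partial_t^j u_x\|_{L_2(Q_T)}$ for $j=m,m+1$, which are exactly terms of the norm \eqref{1.6} (the $L_2(Q_T)$-piece of $\partial_t^j u$ itself being estimated by $T^{1/2}\max_t\|\partial_t^j u\|_{L_2(I)}$).

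Finally, the one-dimensional Sobolev embedding gives a constant $c=c(R)$ with $\sup_{x\in\overline I}|v(t,x)|\le c(R)\|v(t)\|_{H^1(I)}$ for every $t\in[0,T]$; taking the maximum in $t$ and combining with the previous estimate produces \eqref{2.1}.

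The only non-routine point is justifying the identity for $\|v(t)\|_{H^1(I)}^2$ at the level of regularity given by membership in $X^k(Q_T)$; this is the technical step, and it is handled by a standard density/mollification argument exploiting that both $v$ and its distributional time derivative $w$ already lie in $L_2(0,T;H^1(I))$, which places $v$ in $H^1(0,T;H^1(I))\hookrightarrow C([0,T];H^1(I))$.
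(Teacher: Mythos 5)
Your overall strategy coincides with the paper's: both proofs rest on the one-dimensional embedding $\sup_{x\in I}|\varphi(x)|\le c(R)\bigl(\|\varphi'\|_{L_2(I)}+\|\varphi\|_{L_2(I)}\bigr)$ combined with a fundamental-theorem-of-calculus identity in time that upgrades the $L_2(0,T;H^1(I))$ information on $\partial_t^m u$ and $\partial_t^{m+1}u$ to a bound uniform in $t$ (the paper applies the identity only to $\int_I v_x^2(t,x)\,dx$ with $v=\partial_t^m u$, while you apply it to the full $\|v(t)\|_{H^1(I)}^2$). The continuity-in-$t$ justification you sketch ($v,\partial_t v\in L_2(0,T;H^1)$ implies $v\in C([0,T];H^1)$ and the identity holds) is standard and fine.

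There is, however, one substantive discrepancy with the statement: the lemma asserts a constant $c=c(R)$ independent of $T$, whereas your estimate produces a $T$-dependent constant. The dependence enters exactly where you bound $\|v\|_{L_2(0,T;H^1(I))}$ and $\|w\|_{L_2(0,T;H^1(I))}$: the $L_2(Q_T)$-pieces of $\partial_t^m u$ and $\partial_t^{m+1}u$ are not terms of the norm \eqref{1.6}, and your estimate $\|\partial_t^j u\|_{L_2(Q_T)}\le T^{1/2}\max_t\|\partial_t^j u(t,\cdot)\|_{L_2(I)}$ leaves a factor of order $1+T^{1/2}$ in the final constant. The paper avoids this by keeping the zero-order part out of the time integration altogether: the term $\sup_t\|\partial_t^m u(t,\cdot)\|_{L_2(I)}$ is taken directly from \eqref{1.6}, and the identity
\begin{equation*}
\int_I v_x^2(t,x)\,dx=\int_I v_x^2(0,x)\,dx+2\iint_{Q_t}v_{\tau x}v_x\,dx\,d\tau
\le \|v_x(0,\cdot)\|_{L_2(I)}^2+\|v_{tx}\|_{L_2(Q_T)}^2+\|v_x\|_{L_2(Q_T)}^2
\end{equation*}
involves only quantities that are themselves summands of $\|u\|_{X^k(Q_T)}$, so no factor of $T$ appears. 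The fix inside your own scheme is immediate: either apply your identity only to $\|v_x(t)\|_{L_2(I)}^2$, or observe that the zero-order contribution satisfies $2\int_0^t(v,w)_{L_2(I)}\,ds=\|v(t)\|_{L_2(I)}^2-\|v(0)\|_{L_2(I)}^2$, which is bounded by $\max_t\|\partial_t^m u(t,\cdot)\|_{L_2(I)}^2$ without any $T^{1/2}$. With that adjustment your argument yields the stated $c=c(R)$.
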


\begin{proof}
The proof is based on the elementary interpolating inequality
\begin{equation}\label{2.2}
\sup\limits_{x\in I} |\varphi(x)| \leq c(R)\bigl(\|\varphi'\|_{L_2(I)} + \|\varphi\|_{L_2(I)}\bigr)
\end{equation}
and the elementary equality
$$
\int_I v^2(t,x)\,dx = \int_I v^2(0,x)\,dx + 2\iint_{Q_t} v_\tau(\tau,x) v(\tau,x) \,dx d\tau.
$$
Then with the use of \eqref{1.6}
\begin{multline*}
\sup\limits_{(t,x)\in Q_T} v^2(t,x) \leq c\sup\limits_{t\in (0,T)} \Bigl(\int_I v_x^2(t,x)\,dx + \int_I v^2(t,x)\,dx \Bigr)  \leq
c\Bigl(\int_I v_x^2(0,x)\,dx \\+ \iint_{Q_T} v^2_{tx} (t,x)\,dxdt +  \iint_{Q_T} v^2_{x} (t,x)\,dxdt\Bigr) + c\sup\limits_{t\in (0,T)} \int_I v^2(t,x) \, dx\leq c\|v\|^2_{X^1(Q_T)},
\end{multline*}
whence \eqref{2.2} follows.
\end{proof}

\begin{lemma}\label{L2.2}
Let the function $g$ satisfy condition \eqref{1.8}.
Then there exists a nondecreasing function $\beta(r)$ such that if $\|u\|_{X^k(Q_T)}, \|u\|_{X^k(Q_T)} \leq r$ for certain $r>0$
\begin{equation}\label{2.3}
\bigl\| \partial_t^m \bigl(g(u)-g(v)\bigr) \bigr\|_{L_2(Q_T)}\leq T^{1/2}\beta(r) r \|u-v\|_{X^k(Q_T)}, \quad 0\leq m \leq k;
\end{equation}
in particular,
\begin{equation}\label{2.4}
\bigl\| \partial_t^m g(u) \bigr\|_{L_2(Q_T)} \leq T^{1/2}\beta(\|u\|_{X^k(Q_T)}) \|u\|_{X^k(Q_T)}^2, \quad 0\leq m \leq k.
\end{equation}
\end{lemma}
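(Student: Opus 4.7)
The plan is to control $\partial_t^m(g(u)-g(v))$ summand-by-summand using identity \eqref{1.9}, with two main ingredients: Lemma~\ref{L2.1} supplies $L_\infty(Q_T)$-bounds on $\partial_t^n u$, $\partial_t^n v$ for $n\leq k-1$, whereas for the top-order derivative $\partial_t^k$ only the trivial bound $\|\partial_t^k u\|_{L_2(Q_T)}\leq T^{1/2}\|u\|_{X^k(Q_T)}$ (coming from $\partial_t^k u\in C([0,T];L_2(I))$) is available. For $m=0$, I would write $g(u)-g(v)=(u-v)\int_0^1 g'(v+\theta(u-v))\,d\theta$ and exploit $g'(0)=0$ by writing $g'(\xi)=\xi\int_0^1 g''(s\xi)\,ds$, so that $|g'(\xi)|\leq C(r)|\xi|$ for $|\xi|\leq cr$. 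Since Lemma~\ref{L2.1} gives $|u|,|v|\leq cr$ in $Q_T$, we get $|g(u)-g(v)|\leq C(r)r|u-v|$ pointwise and $T^{1/2}$ appears by integration.

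For $1\leq m\leq k$, apply \eqref{1.9} (using $c_{m,0}=0$) to expand $\partial_t^m g(u)-\partial_t^m g(v)$ as a finite linear combination of differences
\begin{equation*}
g^{(l)}(u)\prod_{i=1}^l\partial_t^{n_i}u \;-\; g^{(l)}(v)\prod_{i=1}^l\partial_t^{n_i}v, \qquad l\geq 1,\ n_1+\cdots+n_l=m,
\end{equation*}
and telescope each one as
\begin{equation*}
\bigl[g^{(l)}(u)-g^{(l)}(v)\bigr]\prod_i\partial_t^{n_i}u \;+\; g^{(l)}(v)\sum_j \partial_t^{n_j}(u-v)\prod_{i<j}\partial_t^{n_i}u\prod_{i>j}\partial_t^{n_i}v.
\end{equation*}
All factors $\partial_t^n$ with $n\leq k-1$ are bounded pointwise in $Q_T$ by $cr$ via Lemma~\ref{L2.1}. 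Since $g\in C^{3k-1}$ and $l+1\leq k+1\leq 3k-1$, one has $|g^{(l)}(u)-g^{(l)}(v)|\leq C(r)|u-v|$ and $|g^{(l)}|\leq C(r)$ on the relevant sublevel set.

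The only delicate case is $l=1,\,m=k$, since then $\partial_t^k u$ or $\partial_t^k v$ is not $L_\infty$-bounded. Here I put the top-order factor in $L_2(Q_T)$, gaining $\|\partial_t^k u\|_{L_2(Q_T)}\leq T^{1/2}\|u\|_{X^k(Q_T)}\leq T^{1/2}r$, and keep the scalar $g'$-factor in $L_\infty(Q_T)$. In the subterm $[g'(u)-g'(v)]\partial_t^k u$, the factor $r$ demanded by \eqref{2.3} is already carried by $\|\partial_t^k u\|_{L_2(Q_T)}$; in the subterm $g'(v)\partial_t^k(u-v)$ it must be extracted from $g'(v)$ itself by invoking $g'(0)=0$ as in the $m=0$ argument, giving $|g'(v)|\leq C(r)|v|\leq C(r)cr$. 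When $l\geq 2$, the factor $r$ is supplied automatically by any of the $l-1$ surviving time-derivative factors in the product.

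Collecting all summands and choosing $\beta(r)$ nondecreasing and large enough to dominate every polynomial-in-$r$ constant produced above yields \eqref{2.3}. Estimate \eqref{2.4} is then the specialisation $v\equiv 0$: every summand in \eqref{1.9} for $v$ vanishes thanks to $g(0)=0$, and the role of $r\|u-v\|_{X^k(Q_T)}$ is played by $\|u\|_{X^k(Q_T)}^2$. The main technical obstacle is the combinatorial bookkeeping in the $l=1,\,m=k$ case, where one must make sure the $r$-factor is available in both pieces of the telescoped difference; this is precisely where the hypothesis $g'(0)=0$ is indispensable.
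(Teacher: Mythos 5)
Your proposal is correct and follows essentially the same route as the paper: expand via \eqref{1.9}, use the $L_\infty$-bounds of Lemma~\ref{L2.1} for $\partial_t^n u$, $n\leq k-1$, place the top-order factor (the $l=1$ term) in $L_2(Q_T)$ to gain $T^{1/2}$, and invoke $g'(0)=0$ to extract the extra factor $r$ from $g'(v)$, exactly as in the paper's treatment of the $l=1$ case. The specialisation $v\equiv 0$ giving \eqref{2.4} also matches the paper.
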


\begin{proof}
First of all note that  according to \eqref{2.1}
$$
\|\partial_t^l u\|_{C(\overline Q_T)}, \|\partial_t^l v\|_{C(\overline Q_T)} \leq c r, \quad 0\leq l\leq k-1.
$$
Then since $|g'(\theta)| \leq c(r)r$ if $|\theta|\leq r$
$$
\|g(u)-g(v)\|_{L_2(Q_T)} \leq c(r)r \|u-v\|_{L_2(Q_T)} \leq T^{1/2} c(r)r \|u-v\|_{X^1(Q_T)}.
$$
For $m\geq 1$ apply equality \eqref{1.9}. Then in the right-hand side of this equality the summing is in fact  performed from $l=1$. 
For $l=1$
\begin{multline*}
\|g'(u)\partial_t^m u - g'(v)\partial_t^m v\|_{L_2(Q_T)} \leq c(r) \|u-v\|_{C(\overline Q_T)} \|\partial_t^m u\|_{L_2(Q_T)} \\+c(r) \|v\|_{C(\overline Q_T)} \|\partial_t^m (u-v)\|_{L_2(Q_T)} 
\leq T^{1/2}c_1(r)r  \|u-v\|_{X^m(Q_T)}
\end{multline*}
and for $2\leq l\leq m$ 
$$
\|g^{(l)}(u)-g^{(l)}(v)\|_{L_2(Q_T)} \leq c(r)\|u-v\|_{L_2(Q_T)} \leq T^{1/2} c(r) \|u-v\|_{X^1(Q_T)},
$$
$n_j\leq m-1$ $\forall \ 1\leq j \leq l$ and so
$$
\|g^{(l)}(u) \partial_t^{n_1}u\dots \partial_t^{n_l}u - g^{(l)}(v) \partial_t^{n_1}v\dots \partial_t^{n_l}v\|_{L_2(Q_T)}  \leq
T^{1/2} c(r)r \|u-v\|_{X^m(Q_T)}.
$$
As a result, inequality \eqref{2.3} follows. Inequality \eqref{2.4} is a particular case of \eqref{2.3} for $v\equiv 0$.
\end{proof}

\begin{lemma}\label{L2.3}
Let the function $g$ satisfy condition \eqref{1.8}.
For any function $u\in X^k(Q_T)$ define
\begin{equation}\label{2.5}
\rho(u) \equiv \sum\limits_{m=0}^{k-1} \|\partial_t^m u\big|_{t=0} \|_{H^{3(k-m)}(I)}.
\end{equation}
Then there exists a nondecreasing function $\lambda(\rho)$ such that if $\rho(u), \rho(v) \leq \rho$ for certain $\rho>0$
\begin{equation}\label{2.6}
\bigl\| \partial_t^m \bigl(g(u)-g(v)\bigr)_x\big|_{t=0} \bigr\|_{H^{3(k-m-1)}(I)}\leq \lambda(\rho) \rho(u-v), \quad 0\leq m \leq k-1;
\end{equation}
in particular,
\begin{equation}\label{2.7}
\bigl\| \partial_t^m \bigl(g(u)\bigr)_x\big|_{t=0} \bigr\|_{H^{3(k-m-1)}(I)} \leq \lambda(\rho(u)) \rho(u), \quad 0\leq m \leq k-1.
\end{equation}
\end{lemma}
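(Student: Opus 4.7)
My plan is to expand $\partial_t^m g(u)\big|_{t=0}$ via formula \eqref{1.9}, apply $\partial_x$ by the Leibniz rule, and estimate each resulting summand in $H^{3(k-m-1)}(I)$ using Sobolev embeddings, the Banach algebra property of $H^s(I)$ for integer $s\geq 1$, and the Moser composition estimate for $g^{(l)}\circ u_0$. The Lipschitz inequality \eqref{2.6} is then obtained by a telescoping identity, and its special case $v\equiv 0$ gives \eqref{2.7}.

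Setting $\Phi_n(u):=\partial_t^n u\big|_{t=0}\in H^{3(k-n)}(I)$, formula \eqref{1.9} at $t=0$ yields a finite sum of monomials $g^{(l)}(u_0)\Phi_{n_1}(u)\cdots\Phi_{n_l}(u)$ with $n_1+\dots+n_l=m$ and $l\leq m$; the $l=0$ term appears only for $m=0$ and equals $g(u_0)$. Let $s:=3(k-m-1)$. Since $n_j\leq m\leq k-1$, each $\Phi_{n_j}(u)$ lies in $H^{s+3}(I)$ with norm $\leq\rho(u)\leq\rho$, and similarly $u_0\in H^{s+3}(I)$. Applying $\partial_x$ by Leibniz produces terms of two kinds: either one factor $\Phi_{n_{j_0}}(u)$ is replaced by $\partial_x\Phi_{n_{j_0}}(u)\in H^{s+2}(I)$, or $g^{(l)}(u_0)$ is replaced by $g^{(l+1)}(u_0)(u_0)_x$ with $(u_0)_x\in H^{3k-1}(I)\subset H^{s+2}(I)$.

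For $m\leq k-2$ one has $s\geq 3$, so $H^s(I)$ is a Banach algebra. Every factor above embeds into $H^s(I)$ with norm $\leq C(\rho)$; for the composition factor, the Moser estimate gives $\|g^{(l)}(u_0)\|_{H^s(I)}\leq C(\rho)$ provided $g\in C^{l+s}(\mathbb R)$, and a direct count shows $l+s+1\leq 3k-2$ for every $(l,m)$ in the admissible range, so the hypothesis $g\in C^{3k-1}$ from \eqref{1.8} is sufficient. The algebra property then yields each summand in $H^s(I)$ with norm $\leq C(\rho)$. For the edge case $m=k-1$, where $s=0$ and $H^0(I)=L_2(I)$ is not an algebra, I would instead keep the differentiated factor in $L_2(I)$ and bound the remaining factors in $L^\infty(I)$ via $H^1(I)\hookrightarrow L^\infty(I)$; the composition factor is uniformly bounded since so is $u_0$.

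To upgrade to the Lipschitz bound \eqref{2.6} I would apply the telescoping identity
\[
a_1\cdots a_l-b_1\cdots b_l=\sum_{j=1}^{l}b_1\cdots b_{j-1}(a_j-b_j)a_{j+1}\cdots a_l
\]
together with the Lipschitz Moser estimate $\|g^{(l)}(u_0)-g^{(l)}(v_0)\|_{H^s(I)}\leq C(\rho)\|u_0-v_0\|_{H^s(I)}$. Each resulting summand carries exactly one difference factor, bounded by $\rho(u-v)$ in the appropriate Sobolev norm, while the others are bounded by $\rho$. Setting $v\equiv 0$ and using $g(0)=g'(0)=0$ then yields \eqref{2.7}. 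The main technical obstacle is the bookkeeping: verifying, uniformly across the finite index set $\{(l,n_1,\ldots,n_l):n_1+\dots+n_l=m,\ l\leq m,\ m\leq k-1\}$, that the smoothness $g\in C^{3k-1}$ is exactly enough to run Moser's composition estimate at every intermediate Sobolev scale; once this is in place, the rest of the argument is mechanical.
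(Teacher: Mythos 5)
Your argument is correct, and it reaches the estimates \eqref{2.6}--\eqref{2.7} by a route that differs from the paper's in its key technical ingredients, though the skeleton (expand $\partial_t^m g(u)\big|_{t=0}$ via \eqref{1.9}, put exactly one difference factor per summand via telescoping, take $v\equiv 0$ for \eqref{2.7}) is the same. The paper never invokes the algebra property of $H^s(I)$ or Moser composition estimates: it differentiates explicitly in $x$ up to order $3(k-m)-2$, so that every summand is a product $g^{(i)}(u)\prod\partial_x^n\partial_t^l u$ evaluated at $t=0$, and then the $H^{3(k-m-1)}$ norm reduces to $L_2$ norms of such products, estimated by placing the single difference factor in $L_2(I)$ and all remaining factors in $C(\overline I)$ via the elementary embedding \eqref{2.2} applied to $\partial_t^l u\big|_{t=0}\in H^{3(k-l)}(I)$; this needs only that $g^{(i)}$ be locally Lipschitz for $i\leq 3k-2$, i.e. $g\in C^{3k-1}$. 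Your version instead stays at the level of Sobolev norms, trading the explicit derivative bookkeeping for the Banach-algebra property of $H^s(I)$, $s\geq 1$, plus (Lipschitz) Moser estimates, with a separate elementary treatment of the borderline case $m=k-1$, $s=0$ --- which at that point essentially coincides with the paper's sup-norm-times-$L_2$ argument. Both are sound; the paper's proof is more self-contained and uniform in $m$, yours offloads the combinatorics onto standard composition/product estimates. One point you should pin down: your explicit smoothness count $l+s+1\leq 3k-2$ covers the plain Moser bound, but the Lipschitz Moser estimate for the worst term $g^{(l+1)}(u_0)(u_0)_x$ (at $m=0$, $l=0$, $s=3k-3$) requires $g^{(l+1)}\in C^{s+1}$, i.e. exactly $g\in C^{3k-1}$ --- so hypothesis \eqref{1.8} still suffices, but with no margin.
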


\begin{proof}
Note that according to \eqref{1.9} while calculating $\partial_x^j \partial_t^m g(u)$, where $1\leq j \leq 3(k-m)-2$ one obtains a linear combination of expressions of a type
$$
g^{(i)}(u) \prod \partial_x^n \partial_x^l u,
$$
 where $1\leq i\leq 3k-2$, $l\leq m$, $1\leq n \leq 3(k-l) -2$, with at least one factor $\partial_x^n \partial_x^l u$. Then since by virtue of \eqref{2.2}
$$
\|\partial_t^l u\big|_{t=0}\|_{C^{3(k-l)-2}(\overline I)},
\|\partial_t^l v\big|_{t=0}\|_{C^{3(k-l)-2}(\overline I)}
\leq c \rho(u),
$$
$$
\bigl\|\bigl(g^{(i)}(u) - g^{(i)}(v)\bigr)\big|_{t=0}\bigr\|_{L_2(I)} \leq c(\rho) \rho(u-v), \quad 
\|g^{(i)}(v)\big|_{t=0}\|_{C(\overline I)} \leq c(\rho),
$$
inequality \eqref{2.6} follows. Inequality \eqref{2.7} is a particular case of \eqref{2.6} for $v\equiv 0$.
\end{proof}

\begin{lemma}\label{L2.4}
Let the function $g$ satisfy condition \eqref{1.8}.
Then for any pair of functions $(u_0(x),f(t,x))$, satisfying $u_0\in H^{3k}(I)$, 
$f \in M^k(Q_T)$, the functions $\Phi_m(x;u_0,f)\in H^{3(k-m)}(I)$ for $0\leq m \leq k$ exist. Moreover, define
\begin{equation}\label{2.8}
\varkappa(u_0,f) = \|u_0\|_{H^{3k}(I)} + \sum\limits_{m=0}^{k-1} 
\|\partial_t^m f\big|_{t=0}\|_{H^{3(k-m-1)}(I)};
\end{equation}
then there exists a nondecreasing function $\sigma(\varkappa)$ such that if $\varkappa(u_0,f), \varkappa(\widetilde u_0,\widetilde f) \leq \varkappa$ for certain $\varkappa>0$
\begin{equation}\label{2.9}
\|\Phi_m(\cdot;u_0,f) - \Phi_m(\cdot;\widetilde u_0,\widetilde f)\|_{H^{3(k-m)}(I)} \leq
\sigma(\varkappa) \varkappa(u_0-\widetilde u_0, f-\widetilde f), \quad 
0\leq m \leq k;
\end{equation}
in particular,
\begin{equation}\label{2.10}
\|\Phi_m(\cdot;u_0,f)\|_{H^{3(k-m)}(I)} \leq \sigma(\varkappa(u_0,f)) \varkappa(u_0,f),
\quad 0\leq m \leq k.
\end{equation}
\end{lemma}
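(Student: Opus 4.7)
The plan is to establish existence of $\Phi_m\in H^{3(k-m)}(I)$ together with the Lipschitz estimate (2.9) simultaneously by induction on $m$; (2.10) then follows by specializing to $\widetilde u_0\equiv 0$, $\widetilde f\equiv 0$, using the fact (proved by a short separate induction from (1.10) together with $g(0)=g'(0)=0$) that $\Phi_m(\cdot;0,0)\equiv 0$. The base case $m=0$ is immediate from $\Phi_0=u_0\in H^{3k}(I)$.

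For the inductive step with $1\le m\le k$, split the right-hand side of (1.10) into three groups. The term $\partial_t^{m-1}f|_{t=0}$ lies in $H^{3(k-m)}(I)$ directly from the definition of $M^k(Q_T)$; the linear combination $b\Phi'_{m-1}+\Phi'''_{m-1}$ lies in $H^{3(k-m)}(I)$ since $\Phi_{m-1}\in H^{3(k-m)+3}(I)$ by induction. Both contribute linearly, so the corresponding part of (2.9) is immediate. The nonlinear group is the $x$-derivative of $\sum_{l=0}^{m-1}g^{(l)}(u_0)\sum_{n_1+\cdots+n_l=m-1}c_{m-1,l,n_1,\dots,n_l}\Phi_{n_1}\cdots\Phi_{n_l}$. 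By induction each $\Phi_{n_j}$ with $n_j\le m-1$ lies in $H^{3(k-n_j)}(I)\subset H^{3(k-m)+3}(I)$; since $3(k-m)+3\ge 3>1/2$, this space is a Banach algebra on the bounded interval $I$ and so controls the product $\Phi_{n_1}\cdots\Phi_{n_l}$. The composition $g^{(l)}(u_0)$ is controlled in $H^s(I)$ for any integer $s$ with $0\le s\le 3k$ by the standard Moser-type estimate
\[
\|G(u_0)\|_{H^s(I)}\le C\bigl(\|u_0\|_{C(\overline I)}\bigr)\bigl(1+\|u_0\|_{H^s(I)}\bigr),
\]
valid whenever $G\in C^s(\mathbb R)$. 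Multiplying and taking one $x$-derivative places the whole group in $H^{3(k-m)}(I)$ with the required bound.

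For the Lipschitz estimate on the nonlinear group, expand differences of products via the telescoping identity
\[
\prod_{j=1}^l a_j-\prod_{j=1}^l \widetilde a_j=\sum_{i=1}^l\Bigl(\prod_{j<i}\widetilde a_j\Bigr)(a_i-\widetilde a_i)\Bigl(\prod_{j>i}a_j\Bigr)
\]
and differences of compositions via
\[
g^{(l)}(u_0)-g^{(l)}(\widetilde u_0)=(u_0-\widetilde u_0)\int_0^1 g^{(l+1)}\bigl(\tau u_0+(1-\tau)\widetilde u_0\bigr)\,d\tau.
\]
Combining the algebra and Moser estimates with the inductive Lipschitz bounds on $\Phi_{n_j}-\widetilde\Phi_{n_j}$ yields the required control by $\sigma(\varkappa)\,\varkappa(u_0-\widetilde u_0,f-\widetilde f)$, closing the induction.

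The main obstacle is coordinating the Sobolev algebra property with the Moser composition estimate inside the nonlinear group; in particular, the critical Lipschitz analysis at $(m,l)=(1,0)$ demands $g'\in C^{3k-2}(\mathbb R)$, which is exactly what hypothesis (1.8), $g\in C^{3k-1}(\mathbb R)$, provides.
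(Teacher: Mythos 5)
Your proposal is correct and follows essentially the same route as the paper: induction on $m$, splitting the recursion \eqref{1.10} into the data term $\partial_t^{m-1}f\big|_{t=0}$, the linear term $b\Phi'_{m-1}+\Phi'''_{m-1}$ and the differentiated nonlinear term, and deducing \eqref{2.10} from \eqref{2.9} by taking $\widetilde u_0\equiv 0$, $\widetilde f\equiv 0$. The only difference is in the machinery for the nonlinear group: where you invoke the Banach algebra property of $H^{s}(I)$ and Moser-type composition estimates together with the telescoping and integral-remainder identities, the paper expands the $x$-derivatives of $g^{(l)}(u_0)\Phi_{n_1}\cdots\Phi_{n_l}$ explicitly into terms $g^{(i)}(u_0)\prod\Phi_n^{(s)}$ and bounds all factors but one in $C^{3(k-m)+2}(\overline I)$ (via the inductive bound and Sobolev embedding), measuring the remaining difference in $L_2(I)$ --- the smoothness bookkeeping ($g\in C^{3k-1}(\mathbb R)$ suffices, with the tight case at $m=1$) comes out the same either way.
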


\begin{proof}
Apply induction with respect to $m$. For $m=0$ the assertion is obvious. Then for $m>0$ by \eqref{1.10} one has to calculate
$$
\bigl(g^{(l)}(u_0) \Phi_{n_1}(u_0,f)\cdots\Phi_{n_l}(u_0,f)\bigr)^{(j)}
$$
for $1\leq l \leq m-1$, $1\leq j \leq 3(k-m)+1$, $n_i \leq m-1$. Obviously, it is a linear combination of expressions of a type
$$
g^{(i)}(u_0) \prod \Phi_{n}^{(s)}(u_0,f),
$$
where $1\leq i \leq 3k-2$, $n\leq m-1$, $s\leq 3(k-m)+1$, and, thus, exists. Since here by estimate \eqref{2.10} for $m-1$
$$
\|\Phi_n(u_0,f)\|_{C^{3(k-m)+2}(\overline I)}, 
\|\Phi_n(\widetilde u_0,\widetilde f)\|_{C^{3(k-m)+2}(\overline I)} \leq c(\varkappa),
$$
$$
\|g^{(i)}(u_0) - g^{(i)}(\widetilde u_0)\|_{L_2(I)} \leq
c(\varkappa) \|u_0 - \widetilde u_0\|_{L_2(I)}, 
$$
$$
\|\Phi^{(s)}_n(u_0,f) - \Phi^{(s)}_n(\widetilde u_0,\widetilde f)\|_{L_2(I)} \leq 
c(\varkappa) \varkappa(u_0-\widetilde u_0, f-\widetilde f),
$$
we find that
\begin{multline*}
\bigl\|\bigl(g^{(l)}(u_0) \Phi_{n_1}(u_0,f)\cdots\Phi_{n_l}(u_0,f) -
g^{(l)}(\widetilde u_0) \Phi_{n_1}(\widetilde u_0,\widetilde f)\cdots
\Phi_{n_l}(\widetilde u_0,\widetilde f)\bigr)'\|_{H^{3(k-m)}(I)} \\ \leq c(\varkappa)
\varkappa(u_0- \widetilde u_0, f-\widetilde  f).
\end{multline*}
Finally, note that since $m\leq k$
$$
\|\partial_t^{m-1} f\big|_{t=0} - \partial_t^{m-1} \widetilde f\big|_{t=0}\|_{H^{3(k-m)}(I)}
\leq \varkappa(u_0 -\widetilde u_0, f-\widetilde f),
$$
and by \eqref{2.9} for $m-1$
\begin{multline*}
\bigl\|\bigl(b\Phi'_{m-1}(\cdot;u_0,f) + \Phi'''_{m-1}(\cdot;u_0,f)\bigr) -
\bigl(b\Phi'_{m-1}(\cdot;\widetilde u_0,\widetilde f) + 
\Phi'''_{m-1}(\cdot;\widetilde u_0,\widetilde f)\bigr)\bigr\|_{H^{3(k-m)}(I)} \\ \leq
c(\varkappa) \varkappa(u_0-\widetilde u_0, f-\widetilde f),
\end{multline*}
and finish the proof of existence of $\Phi_m$ and inequality \eqref{2.9}. Inequality \eqref{2.10} is a particular case of \eqref{2.9} for $\widetilde u_0\equiv 0$, $\widetilde f\equiv 0$.
\end{proof}

\section{Auxiliary linear problem}\label{S3}

Consider in a rectangle $Q_T$ an initial-boundary value problem for a linear equation
\begin{equation}\label{3.1}
u_t + bu_x + u_{xxx} = f(t,x)
\end{equation}
with initial and boundary conditions \eqref{1.2}, \eqref{1.3}. 

\begin{definition}\label{D3.1}
Let $u_0\in L_2(I)$, $\mu_0,\nu_0,\nu_1\in L_2(0,T)$, $f\equiv f_0+f_{1x}$, where $f_0\in L_1(0,T;L_2(0,R))$, $f_1\in L_2(Q_T)$.
A function $u\in L_\infty(0,T;L_2(0,R))$ is called a weak solution of problem \eqref{3.1}, \eqref{1.2}, \eqref{1.3} if for any function $\phi\in L_2(0,T;H^3(0,R))$, satisfying $\phi_t\in L_2(Q_T)$, $\phi\big|_{t=T}\equiv 0$, $\phi\big|_{x=0}= \phi_x\big|_{x=0}= \phi\big|_{x=R}\equiv 0$, the following identity is satisfied:
\begin{multline}\label{3.2}
\iint_{Q_T} \Bigl[u(\phi_t+b\phi_x+\phi_{xxx}) +f_0\phi -f_1\phi_x\Bigr]\,dxdt + 
\int_I u_0\phi\big|_{x=0}\,dx \\+ 
\int_0^T \Bigl(\mu_0\phi_{xx}\big|_{x=0} - \nu_0\phi_{xx}\big|_{x=R}+ \nu_1\phi_x\big|_{x=R}\Bigr)\,dt =0.
\end{multline}
\end{definition}

Introduce a function space
\begin{multline*}
M^k_1(Q_T) = \{ f(t,x) = f_{1x}(t,x): \partial_t^m f_1 \in L_2(Q_T),\ 0\leq m \leq k;\\ \partial_t^m f\big|_{t=0} \in H^{3(k-m-1)}(I), \ 0\leq m \leq k-1\}
\end{multline*}
endowed with the natural norm
\begin{equation}\label{3.3}
\|f\|_{M^k_1(Q_T)} = \sum\limits_{m=0}^k \|\partial_t^m f_1\|_{L_2(Q_T)} + \sum\limits_{m=0}^{k-1} \|\partial_t^m f\big|_{t=0} \|_{H^{3(k-m-1)}(I)}.
\end{equation}

\begin{remark}\label{R3.1}
It is obviously assumed that any function $f\in M_1^k(Q_T)$ is already changed on a zero measure set providing $\partial_t^m f \in C([0,T]; H^{-1}(I))$, $0\leq m \leq k-1$.
\end{remark}

In order to define compatibility conditions for the linear problem introduce the following auxiliary functions $\widetilde\Phi_m(x) = \widetilde\Phi_m(x;u_0,f)$: let $\widetilde\Phi_0 \equiv u_0$ and for $m\geq 1$
\begin{equation}\label{3.4}
\widetilde\Phi_m \equiv \partial_t^{m-1} f\big|_{t=0} - 
(b\widetilde\Phi'_{m-1} + \widetilde\Phi'''_{m-1}).
\end{equation}

Similarly to Lemma~\ref{L2.4} (of course, with more simple argument due to the absence of nonlinearity) the functions $\widetilde\Phi_m(x;u_0,f)\in H^{(3(k-m)}(I)$, $0\leq m \leq k$, exist if $u_0\in H^{3k}(I)$, 
$\partial_t^m f\big|_{t=0} \in H^{3(k-m-1)}(I)$ for $0\leq m\leq k-1$,  and
\begin{equation}\label{3.5}
\|\widetilde\Phi_m(\cdot;u_0,f)\|_{H^{3(k-m)}(I)} \leq c 
\varkappa(u_0,f), \quad 0\leq m\leq k.
\end{equation}

\begin{theorem}\label{T3.1}
Let $u_0\in H^{3k}(I)$, $\mu_0, \nu_0 \in H^{k+1/3}(0,T)$, $\nu_1\in H^k(0,T)$, $f\equiv f_0 +f_{1x}$, $f_0\in M^k(Q_T)$, $f_{1x} \in M^k_1(Q_T)$, $\mu_0^{(m)}(0) = \widetilde\Phi_{m}(0;u_0,f)$, $\nu_0^{(m)}(0) = \widetilde\Phi_m(R;u_0,f)$, $\nu_1^{(m)}(0) =  \widetilde\Phi_m'(R;u_0,h_0)$ for $0\leq m\leq k-1$. Then there exists a unique solution $u\in X^k(Q_T)$ of problem \eqref{3.1}, \eqref{1.2}, \eqref{1.3} and 
\begin{multline}\label{3.6}
\|u\|_{X(Q_{T})} \leq c(T)\Bigl[ \|u_0\|_{H^{3k}(I)} + \|\mu_0\|_{H^{k+1/3}(0,T)} + 
\|\nu_0\|_{H^{k+1/3}(0,T)}  + \|\nu_1\|_{H^k(0,T)} \\+
\|f_0\|_{M^k(Q_T)} + \|f_{1x}\|_{M^k_1(Q_T)}\Bigr],
\end{multline}
where the constant c(T) does not decrease in $T$.
Moreover, for $0< m\leq k$ the function $\partial_t^m u\in X^{k-m}(Q_T)$ is the solution to problem of \eqref{3.1}, \eqref{1.2}, \eqref{1.3} type, where $u_0,\mu_0,\nu_0,\nu_1,f$ are substituted by $\widetilde\Phi_m,\mu_0^{(m)},\nu_0^{(m)},\nu_1^{(m)},\partial_t^m f$.
\end{theorem}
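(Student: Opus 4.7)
The plan is an induction on $k$. The base case $k=0$ is supplied by the classical weak-solution theory for the linear KdV-type equation on a bounded interval, and the inductive step is obtained by differentiating in $t$ and reintegrating. For $k=0$, existence and uniqueness of $u\in X(Q_T)=X^0(Q_T)$ satisfying identity \eqref{3.2} together with bound \eqref{3.6} follows from the framework developed in \cite{F19-1}: energy identities combined with the boundary-trace smoothing of the linear KdV semigroup that produces exactly the pairing $H^{1/3}(0,T)$ for $\mu_0,\nu_0$ and $L_2(0,T)$ for $\nu_1$ (cf.\ Remark~\ref{R1.5}), with the decomposition $f=f_0+f_{1x}$ entering through the duality test in \eqref{3.2}.

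Assume the theorem holds for $k-1$ and take the data of the $k$-th instance. Form the shifted data $(\widetilde\Phi_1(\cdot;u_0,f),\mu_0',\nu_0',\nu_1',\partial_t f)$, where $\partial_t f=\partial_t f_0+(\partial_t f_1)_x$ belongs to $M^{k-1}(Q_T)+M^{k-1}_1(Q_T)$. A short induction on $m$ using \eqref{3.4} yields
$$
\widetilde\Phi_m\bigl(\cdot;\widetilde\Phi_1(\cdot;u_0,f),\partial_t f\bigr)=\widetilde\Phi_{m+1}(\cdot;u_0,f),\qquad 0\le m\le k-1,
$$
which immediately converts the compatibility conditions of the theorem at level $k$ into those needed at level $k-1$ for the shifted data. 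By \eqref{3.5} and elementary differentiation inequalities such as $\|\mu_0'\|_{H^{(k-1)+1/3}(0,T)}\le\|\mu_0\|_{H^{k+1/3}(0,T)}$, the $(k-1)$-norm of the shifted data is dominated by the right-hand side of \eqref{3.6} at level $k$. The induction hypothesis then produces a unique $v\in X^{k-1}(Q_T)$ solving the shifted problem with the corresponding estimate.

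Define $u(t,x):=u_0(x)+\int_0^t v(\tau,x)\,d\tau$. Then $u(0,\cdot)=u_0$, and the compatibilities $u_0(0)=\mu_0(0)$, $u_0(R)=\nu_0(0)$, $u_0'(R)=\nu_1(0)$ combined with the boundary traces $v(\cdot,0)=\mu_0'$, $v(\cdot,R)=\nu_0'$, $v_x(\cdot,R)=\nu_1'$ force $u$ to satisfy \eqref{1.3}. To check \eqref{3.1}, set $w:=u_t+bu_x+u_{xxx}-f$; then $w_t\equiv 0$ because $v$ solves the $t$-differentiated equation, while $w|_{t=0}=\widetilde\Phi_1+bu_0'+u_0'''-f|_{t=0}=0$ by the very definition of $\widetilde\Phi_1$. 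Hence $w\equiv 0$. The membership $u\in X^k(Q_T)$ follows from $u_t=v\in X^{k-1}(Q_T)$ and $u_0\in H^{3k}(I)$, and estimate \eqref{3.6} at level $k$ is obtained by combining the induction estimate for $v$ with the trivial bound $\|u(t,\cdot)\|_{L_2(I)}\le\|u_0\|_{L_2(I)}+T^{1/2}\|v\|_{L_2(Q_T)}$. Uniqueness is inherited from the base case, since any two $X^k$-solutions differ by a weak solution of the homogeneous problem in $X(Q_T)$, which must vanish. The final assertion---that $\partial_t^m u=\partial_t^{m-1}v$ solves the $m$-shifted analogue in $X^{k-m}(Q_T)$---follows by iterating the construction $m$ times.

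The main obstacle is the base case $k=0$: establishing weak well-posedness for the full set of inhomogeneous data, namely $f=f_0+f_{1x}$ together with boundary data at the Kato-natural regularity $H^{1/3}(0,T)\times H^{1/3}(0,T)\times L_2(0,T)$, requires either lifting the boundary data by a suitable extension into $X(Q_T)$ to reduce the problem to homogeneous boundary conditions plus a modified right-hand side, or a direct semigroup/Fourier construction in the spirit of \cite{KPV} combined with a boundary-potential argument. Once the $k=0$ estimate is secured, the inductive bootstrap is essentially a bookkeeping exercise in compatibility.
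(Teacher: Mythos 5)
Your overall strategy coincides with the paper's: take the weak well-posedness in $X(Q_T)$ from \cite{F19-1} as the base (the paper simply cites it, with estimate \eqref{3.9}), and climb in time regularity by solving the $t$-differentiated problem with data $(\widetilde\Phi_{m+1},\mu_0^{(m+1)},\nu_0^{(m+1)},\nu_1^{(m+1)},\partial_t^{m+1}f)$ and integrating in $t$. The identity $\widetilde\Phi_m(\cdot;\widetilde\Phi_1,\partial_t f)=\widetilde\Phi_{m+1}(\cdot;u_0,f)$ and the bookkeeping of norms are fine.

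The gap is in the step where you verify that $u(t,x)=u_0(x)+\int_0^t v(\tau,x)\,d\tau$ solves the original problem. You argue classically: set $w:=u_t+bu_x+u_{xxx}-f$, note $w_t\equiv 0$ and $w|_{t=0}=0$, hence $w\equiv 0$; and you read off \eqref{1.3} from ``boundary traces'' of $v$. At the regularity actually available this does not work as stated: solutions in $X^k(Q_T)$ have no third $x$-derivative as a function, $f$ contains the distributional term $f_{1x}$, and $u_x$ (respectively $v_x$) has no trace at $x=R$ in $L_2(Q_T)$, so the condition $u_x(t,R)=\nu_1(t)$ --- and likewise $v_x(\cdot,R)=\nu_1'$ for the differentiated problem --- is encoded \emph{only} through the boundary term $\int_0^T\nu_1\phi_x\big|_{x=R}\,dt$ in the integral identity \eqref{3.2}, not as a pointwise trace you can ``combine with compatibilities''. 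What must be shown is that $u$ satisfies identity \eqref{3.2} with the data $(u_0,\mu_0,\nu_0,\nu_1,f)$, and your distributional interior argument at best recovers the PDE in $\mathcal D'(Q_T)$, not the initial and boundary terms of \eqref{3.2}. This is exactly the step the paper carries out: for a test function $\phi$ it inserts $\widetilde\phi(t,x)=\int_t^T\phi(\tau,x)\,d\tau$ into the identity \eqref{3.2} satisfied by the solution of the differentiated problem, integrates by parts using the compatibility conditions $\mu_0^{(m)}(0)=\widetilde\Phi_m(0)$, $\nu_0^{(m)}(0)=\widetilde\Phi_m(R)$, $\nu_1^{(m)}(0)=\widetilde\Phi_m'(R)$ and the recursion \eqref{3.4}, obtains for the antiderivative the same identity as for the level-$m$ solution, and then invokes uniqueness of weak solutions in $X(Q_T)$ to conclude $\partial_t^m u=v_m$. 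Replace your pointwise verification by this weak-formulation computation (or an equivalent one) and the proof closes; as written, the inductive step asserts rather than proves that the antiderivative is a weak solution with boundary datum $\nu_1$.
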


\begin{proof}           
For $0\leq m\leq k$ consider a linear problem
\begin{gather}\label{3.7}
u_t + bu_x +u_{xxx} = \partial_t^m f,\\
\label{3.8}
u\big|_{t=0} =\widetilde\Phi_m,\quad u\big|_{x=0} = \mu_0^{(m)}, \quad
u\big|_{x=R} = \nu_0^{(m)}, \quad u_x\big|_{x=0} = \nu_1^{(m)}.
\end{gather}
Then (see, for example, \cite{F19-1}) there exists the unique solution of this problem $v_m \in X(Q_T)$ and
\begin{multline}\label{3.9}
\|v_m\|_{X(Q_{T})} \leq c(T)\Bigl[ \|\widetilde\Phi_m\|_{L_2(I)} + 
\|\mu_0^{(m)}\|_{H^{1/3}(0,T)} + \|\nu_0^{(m)}\|_{H^{1/3}(0,T)}  + 
\|\nu_1^{(m)}\|_{L_2(0,T)} \\+
\|\partial_t^m f_0\|_{L_{1}(0,T;L_2(I))} + \|\partial_t^m f_1\|_{L_2(Q_{T})}\Bigr].
\end{multline}
Obviously, $v_0\equiv u$. For any $0\leq m \leq k-1$ set
$$
\widetilde v_m(t,x) \equiv \widetilde\Phi_m(x) + 
\int_0^t \partial_\tau v_{m+1}(\tau,x) \,d\tau.
$$
Obviously, $\widetilde v_{mt} \equiv \widetilde v_{m+1}$. Let $\phi$ be an arbitrary test function from Definition~\ref{D3.1}, 
$$
\widetilde\phi(t,x) \equiv \int_t^T \phi(\tau,x)\,d\tau.
$$
This function, of course, also verifies the conditions on test functions. Write the corresponding identity \eqref{3.2} for the function $v_{m+1}$ and the function $\widetilde\phi$. Then after integrating by parts with the use of compatibility conditions and the definition of the function $\widetilde\Phi_{m+1}$ one derives for the function $\widetilde v_m$ and the function $\phi$ the same identity as for the function $v_m$. Then uniqueness of the solution implies that $\widetilde v_m \equiv v_m$, that is $v_{m+1} \equiv v_{mt}$ and so $\partial_t^m u \equiv v_m$ for $0\leq m\leq k$. In particular, $u\in X^k(Q_T)$.
\end{proof}

Introduce certain additional notation. Let 
$$
u = S(u_0, \mu_0, \nu_0, \nu_1, f_0, f_1)
$$
be the solution of problem \eqref{3.1}, \eqref{1.2}, \eqref{1.3} from the space $X^k(Q_T)$ under the hypothesis of Theorem~\ref{T3.1}. 

Let a function $\omega \in C(\overline{I})$. On the space of functions $u(t,x)$, lying in $L_1(I)$ for all $t\in [0,T]$, define a linear operator $Q(\omega)$ by a formula $(Q(\omega)u)(t) = q(t;u,\omega)$, where
\begin{equation}\label{3.10}
q(t;u,\omega)\equiv \int_I u(t,x) \omega(x) \,dx,\quad t\in [0,T].
\end{equation}

\begin{lemma}\label{L3.1}
Let the hypothesis of Theorem~\ref{T3.1} be satisfied. Let the function $\omega$ satisfy conditions \eqref{1.11}. Then for the function $u =  S(u_0, \mu_0, \nu_0, \nu_1, f_0, f_1)$ the corresponding function $q(\cdot;u,\omega) = Q(\omega)u \in H^{k+1}(0,T)$ and for $0\leq m \leq k$
\begin{multline}\label{3.11}
q^{(m+1)}(t;u,\omega) = r(t;u,\omega,m) \equiv \nu_1^{(m)}(t) \omega'(R) + 
\mu_0^{(m)}(t) \omega''(0) - \nu_0^{(m)}(t) \omega''(R) \\ +
\int_I \bigl[\partial_t^m u(t,x) \bigl(b \omega'(x) + \omega'''(x)\bigr) +
\partial_t^m f_0(t,x) \omega(x) - \partial_t^m f_1(t,x)\omega'(x)\bigr]\, dx;
\end{multline}
moreover,
\begin{multline}\label{3.12}
\|q^{(m+1)}(\cdot;u,\omega)\|_{L_2(0,T)} \leq c(T) \Bigl[ T^{1/2}\|u\|_{X^m(Q_T)} +\|\mu_0^{(m)}\|_{L_2(0,T)} \\+ \|\nu_0^{(m)}\|_{L_2(0,T)} + \|\nu_1^{(m)}\|_{L_2(0,T)} + \|\partial_t^m f_0\|_{L_{2}(Q_T)} + \|\partial_t^m f_1\|_{L_{2}(Q_T)}\Bigr],
\end{multline}
where the constant $c(T)$ does not decrease in $T$.
\end{lemma}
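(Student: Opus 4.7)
The plan is to exploit the weak formulation \eqref{3.2} rather than attempt direct integrations by parts, which are not available since $u\in X^k(Q_T)$ has only $H^1$ regularity in $x$, not the $H^3$ needed to pair with $u_{xxx}$ pointwise. First I would verify that $q^{(m)}(t)=\int_I \partial_t^m u(t,x)\omega(x)\,dx$ for $0\leq m\leq k$. This follows from the memberships $\partial_t^m u\in C([0,T];L_2(I))$ and $\partial_t^{m+1}u\in L_2(0,T;L_2(I))$ built into the norm \eqref{1.6}: the identity $\partial_t^m u(t)=\partial_t^m u(0)+\int_0^t\partial_t^{m+1}u(\tau)\,d\tau$ in $L_2(I)$, paired against $\omega\in L_2(I)$, shows that $q^{(m)}$ is absolutely continuous with the expected derivative.

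For the case $m=0$ of formula \eqref{3.11}, I would substitute the separated test function $\phi(t,x)=\eta(t)\omega(x)$ into identity \eqref{3.2}, where $\eta\in C^\infty([0,T])$ is arbitrary subject to $\eta(T)=0$. The admissibility conditions on $\phi$ in $x$ (namely $\phi|_{x=0}=\phi_x|_{x=0}=\phi|_{x=R}=0$) reduce exactly to the standing hypothesis \eqref{1.11} on $\omega$. After collecting the spatial integrals, the resulting identity takes the form
\[
\int_0^T \eta'(t)q(t)\,dt+\eta(0)q(0)=-\int_0^T\eta(t)\,r(t;u,\omega,0)\,dt,
\]
where $r$ is precisely the right-hand side of \eqref{3.11} with $m=0$ and $q(0)=\int_I u_0\omega\,dx$. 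Integration by parts in $t$ on the left (using $\eta(T)=0$) converts it into $-\int_0^T\eta(t)q'(t)\,dt$, and since $\eta$ ranges over a dense subset of $L_2(0,T)$, the pointwise identity $q'=r(\cdot;u,\omega,0)$ follows.

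For $1\leq m\leq k$ I would invoke the final assertion of Theorem~\ref{T3.1}: $\partial_t^m u$ is itself the unique solution of the analogous linear problem with data $\widetilde\Phi_m,\mu_0^{(m)},\nu_0^{(m)},\nu_1^{(m)}$ and right-hand side $\partial_t^m f=\partial_t^m f_0+(\partial_t^m f_1)_x$. Applying the already-proved $m=0$ formula to this shifted problem, and combining with the identification $q^{(m)}(t)=\int_I\partial_t^m u\cdot\omega\,dx$ from the first step, yields formula \eqref{3.11} in its stated generality. In particular, the membership $q\in H^{k+1}(0,T)$ is a consequence of bounding $q^{(k+1)}$ in $L_2(0,T)$, which is the content of \eqref{3.12} at $m=k$.

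To obtain estimate \eqref{3.12} I would take the $L_2(0,T)$ norm of \eqref{3.11} term by term. The three boundary contributions are immediate, with constants $|\omega'(R)|,|\omega''(0)|,|\omega''(R)|$, each controlled by $\|\omega\|_{H^3(I)}$. For the spatial integrals, Cauchy--Schwarz in $x$ gives pointwise bounds $\|\partial_t^m u(t,\cdot)\|_{L_2(I)}\|b\omega'+\omega'''\|_{L_2(I)}$, $\|\partial_t^m f_0(t,\cdot)\|_{L_2(I)}\|\omega\|_{L_2(I)}$, and $\|\partial_t^m f_1(t,\cdot)\|_{L_2(I)}\|\omega'\|_{L_2(I)}$; integrating in $t$ and using
\[
\|\partial_t^m u\|_{L_2(0,T;L_2(I))}\leq T^{1/2}\max_{t\in[0,T]}\|\partial_t^m u(t,\cdot)\|_{L_2(I)}\leq T^{1/2}\|u\|_{X^m(Q_T)}
\]
produces the $T^{1/2}\|u\|_{X^m(Q_T)}$ term while the $f_0,f_1$ bounds are direct. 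The monotonicity of $c(T)$ in $T$ is built into these $L_2(0,T)$ and $T^{1/2}$ factors. The only subtle point in the whole argument is the opening observation that $\phi=\eta(t)\omega(x)$ is an admissible test function precisely under \eqref{1.11}; after that the derivation is bookkeeping.
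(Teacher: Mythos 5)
Your proposal is correct and follows essentially the same route as the paper: both arguments invoke the final assertion of Theorem~\ref{T3.1} that $\partial_t^m u$ solves the shifted linear problem, test its weak identity \eqref{3.2} against the separated function $\psi(t)\omega(x)$ (admissible exactly because of \eqref{1.11}), and obtain \eqref{3.12} by Cauchy--Schwarz together with $\|\partial_t^m u\|_{L_2(0,T;L_2(I))}\leq T^{1/2}\|u\|_{X^m(Q_T)}$. The only cosmetic difference is bookkeeping: the paper moves all time derivatives onto a compactly supported $\psi$ at once to read off the Sobolev derivative $q^{(m+1)}=r$ from \eqref{3.14}, whereas you first identify $q^{(m)}=\int_I\partial_t^m u\,\omega\,dx$ and then apply the $m=0$ computation to the shifted problem.
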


\begin{proof}
According to Theorem~\ref{T3.1} write for the function $\partial_t^m u$ the corresponding identity \eqref{3.2}:
\begin{multline}\label{3.13}
\iint_{Q_T} \Bigl[\partial_t^m u(\phi_t+b\phi_x+\phi_{xxx}) +\partial_t^m f_0\phi -\partial_t^m f_1\phi_x\Bigr]\,dxdt + 
\int_I \widetilde\Phi_m\phi\big|_{x=0}\,dx \\+ 
\int_0^T \Bigl(\mu_0^{(m)}\phi_{xx}\big|_{x=0} - \nu_0^{(m)}\phi_{xx}\big|_{x=R}+ \nu_1^{(m)}\phi_x\big|_{x=R}\Bigr)\,dt =0.
\end{multline}
For an arbitrary function $\psi \in C_0^\infty(0,T)$ let $\phi(t,x) \equiv \psi(t)\omega(x)$. This function satisfies the assumption on a test function from Definition~\ref{D3.1} and then identity  \eqref{3.13} after integration by parts yields that
\begin{equation}\label{3.14}
\int_0^T \psi^{(m+1)}(t) q(t;u,\omega) \, dt = (-1)^{m+1} \int_0^T \psi(t) r(t;u,\omega,m) \,dt.
\end{equation}
Since $r(\cdot;u,\omega,m)\in L_2(0,T)$, it follows from \eqref{3.14} that there exists the Sobolev derivative $q^{(m+1)}(t;u,\omega) = r(t;u,\omega,m) \in L_2(0,T)$.
Since $\|\partial_t^m u\|_{L_2(0,T;L_2(I))} \leq T^{1/2} 
\|\partial_t^m u\|_{C([0,T];L_2(I))}$, inequality \eqref{3.12} follows from \eqref{3.11}.
\end{proof}

\section{Linear inverse problems}\label{S4}

The following lemma is crucial for the first inverse problem in the linear case.

\begin{lemma}\label{L4.1}
Let the functions $\omega_j$, $j=1,\, 2$, satisfy conditions \eqref{1.11}, 
$h\in C^k ([0,T];L_2(I))$ and condition \eqref{1.14} be satisfied. Let the functions $\varphi_j \in \widetilde 	H^{k+1}(0,T)$. Then there exists a unique pair of functions $(F,\nu) \equiv \Gamma_1(\varphi_1,\varphi_2)\in \bigl(\widetilde H^k(0,T)\bigr)^2$ such that the function $u = S(0,0,0,\nu,F h,0) \in X^k(Q_T)$ verifies conditions \eqref{1.4}. Moreover, the linear operator $\Gamma_1: \bigl(\widetilde H^{k+1}(0,T)\bigr)^2 \to \bigl(\widetilde H^k(0,T)\bigr)^2$ is bounded and its norm does not decrease in $T$.
\end{lemma}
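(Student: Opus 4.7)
The plan is to convert the inverse problem into a linear operator equation on $(L_2(0,T))^2$ for the top-order derivatives $(F^{(k)},\nu^{(k)})$ and then invert that equation by a Volterra-type fixed-point argument. First I would apply Lemma~\ref{L3.1} with $m=k$ to $u=S(0,0,0,\nu,Fh,0)$ and $\omega=\omega_j$. Because $F,\nu\in\widetilde H^k(0,T)$, an induction on the linear recursion \eqref{3.4} shows that $\widetilde\Phi_m(\cdot;0,Fh)\equiv 0$ for $0\le m\le k$, hence $\partial_t^m u|_{t=0}=0$ for $0\le m\le k-1$; combined with $\nu^{(m)}(0)=0$ this yields $q^{(m)}(0;u,\omega_j)=0$ for $0\le m\le k$. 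Matched with $\varphi_j\in\widetilde H^{k+1}(0,T)$, the overdetermination \eqref{1.4} is therefore equivalent in $L_2(0,T)$ to $q^{(k+1)}(t;u,\omega_j)=\varphi_j^{(k+1)}(t)$. Expanding $\partial_t^k(Fh)=F^{(k)}h+\sum_{i=0}^{k-1}\binom{k}{i}F^{(i)}\,\partial_t^{k-i}h$ in \eqref{3.11} produces the system
$$\nu^{(k)}(t)\,\omega_j'(R)+F^{(k)}(t)\,\psi_j(t)+B_j(t;F,\nu)=\varphi_j^{(k+1)}(t),\quad j=1,2,$$
where $B_j$ collects the sum over $i<k$ together with the integral $\int_I\partial_t^k u(t,x)\bigl(b\omega_j'(x)+\omega_j'''(x)\bigr)\,dx$.

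This is a $2\times 2$ linear system with matrix $\mathcal M(t)=\bigl(\begin{smallmatrix}\psi_1(t)&\omega_1'(R)\\ \psi_2(t)&\omega_2'(R)\end{smallmatrix}\bigr)$, whose determinant is $\Delta(t)$. By \eqref{1.14} and compactness of $[0,T]$, $|\Delta(t)|\ge\delta_0>0$, and since $\psi_j\in C^k([0,T])$ the inverse $\mathcal M(t)^{-1}$ has continuous entries and is uniformly bounded. Inverting, the problem becomes the operator equation
$$(I+\mathcal K)\binom{F^{(k)}}{\nu^{(k)}}=G,\qquad G=\mathcal M^{-1}\binom{\varphi_1^{(k+1)}}{\varphi_2^{(k+1)}}\in(L_2(0,T))^2,$$
where $\mathcal K(F^{(k)},\nu^{(k)})(t)=\mathcal M(t)^{-1}(B_1(t),B_2(t))^{T}$ is a bounded linear operator on $(L_2(0,T))^2$ (identifying $\widetilde H^k$ with $L_2$ via $k$-fold antidifferentiation from $0$). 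The structural fact underlying the inversion is that $\mathcal K$ is causal: each $F^{(i)}(t)=\int_0^t\frac{(t-s)^{k-i-1}}{(k-i-1)!}F^{(k)}(s)\,ds$ is a classical Volterra integral of $F^{(k)}$, and by Theorem~\ref{T3.1} the value $\partial_t^k u(t,\cdot)$ depends only on $(F^{(k)},\nu^{(k)})|_{[0,t]}$.

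To invert $I+\mathcal K$ I would combine \eqref{3.6} at order $k$, \eqref{3.12}, and $\|F^{(i)}\|_{L_2(0,T)}\le T^{k-i}\|F^{(k)}\|_{L_2(0,T)}$ for $i<k$ to obtain an estimate of the form
$$\|\mathcal K(F^{(k)},\nu^{(k)})\|_{(L_2(0,T))^2}\le C(T)\bigl(T^{1/2}+T\bigr)\bigl(\|F^{(k)}\|_{L_2(0,T)}+\|\nu^{(k)}\|_{L_2(0,T)}\bigr),$$
so that $I+\mathcal K$ is a contraction on a sufficiently small interval $[0,T_*]$ and inverts there by Neumann series. For arbitrary $T>0$ one either iterates on consecutive intervals of length $T_*$ (each step being a problem of the same form with zero-order data inherited linearly from the previous step), or equivalently works with the equivalent weighted norm $\|\phi\|_\lambda^2=\int_0^T e^{-2\lambda t}|\phi|^2\,dt$ and uses that Volterra operators satisfy $\|\mathcal K\|_\lambda\to 0$ as $\lambda\to\infty$. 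The unique $(F^{(k)},\nu^{(k)})=(I+\mathcal K)^{-1}G$ then yields $(F,\nu)\in(\widetilde H^k(0,T))^2$ by $k$-fold antidifferentiation from $0$; the operator $\Gamma_1$ is linear and bounded by composition, and its norm does not decrease in $T$ since $C(T)$ does not. The main obstacle I anticipate is precisely this passage from small $T$ to arbitrary $T$: the energy constant in \eqref{3.6} does not shrink with $T$, so a global contraction fails, and one must genuinely use causality both of the lower-order Volterra integrals and of the solution map $S$ furnished by Theorem~\ref{T3.1}.
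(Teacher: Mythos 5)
Your proposal is correct and takes essentially the same route as the paper: both reduce the overdetermination via Lemma~\ref{L3.1}, formula \eqref{3.11}, to a pointwise $2\times 2$ linear system whose matrix has determinant $\Delta(t)\ne 0$ by \eqref{1.14}, and then invert the remaining causal (Volterra-type) part by a contraction in an exponentially weighted norm, exactly as in \eqref{4.11}--\eqref{4.13}. The only cosmetic differences are that you solve for the top derivatives $(F^{(k)},\nu^{(k)})$ in $\bigl(L_2(0,T)\bigr)^2$ using \eqref{3.11} at $m=k$, whereas the paper runs the fixed point for $(F,\nu)$ itself in $\bigl(\widetilde H^k(0,T)\bigr)^2$ via the $m=0$ relation, and the paper gets boundedness of $\Gamma_1$ abstractly (Banach inverse mapping theorem) with the non-decrease in $T$ proved by a Taylor-polynomial extension of $\varphi_j$, while in your quantitative Neumann-series version that monotonicity claim should be justified by checking that all constants entering your explicit bound (including the weight factor and $\min_{[0,T]}|\Delta|$) are non-decreasing in $T$, rather than by appealing to $C(T)$ alone.
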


\begin{proof}
First of all note, that if $(F,\nu) \in \bigl(\widetilde H^k(0,T)\bigr)^2$ then, in particular, $F h \in M^k(Q_T)$, $\widetilde\Phi_m(\cdot;0,F h) \equiv 0$ for $0\leq m\leq k$, so the hypothesis of Theorem~\ref{T3.1} for problem
\begin{gather}\label{4.1}
u_t +b u_x +u_{xxx} = F h,\\
\label{4.2}
u\big|_{t=0}=0,\quad u\big|_{x=0} = u\big|_{x=R} =0,\quad u_x\big|_{x=R} =\nu
\end{gather}
is satisfied and the function $u= S(0,0,0,\nu,F h,0) \in X^k(Q_T)$ exists as the solution of this problem.

On the space $\bigl(\widetilde H^{k}(0,T)\bigr)^2$ introduce two linear operators $\Lambda_j(F,\nu) = Q(\omega_j)S(0,0,0,\nu,F h,0)$, $j=1,\, 2$. Let $\Lambda = (\Lambda_1, \lambda_2)$. Then   Lemma~\ref{L3.1} provides that the operator $\Lambda$ acts from the space $\bigl(\widetilde H^{k}(0,T)\bigr)^2$ into the space $\bigl(\widetilde H^{k+1}(0,T)\bigr)^2$ and is bounded.

Note that the equality $(\varphi_1, \varphi_2) = \Lambda (F,\nu)$ for $F, \nu \in \widetilde H^k(0,T)$ obviously means that the functions $F, \nu$ are the desired ones. 

Define for the function $u=  S(0,0,0,\nu,F h,0)$
\begin{equation}\label{4.3}
\widetilde r_j(t) \equiv \int_I u(t,x) \bigl(b \omega'_j(x) + \omega'''_j(x)\bigr)\, dx
\in \widetilde H^k(0,T).
\end{equation}
Then it follows from \eqref{3.11} for $m=0$ (here $f_0 \equiv F h$, $f_1\equiv 0$, $\mu_0=\nu_0 \equiv 0$, $\nu_1\equiv \nu$) that for $q_j(t) \equiv q(t;u,\omega_j) = 
\bigl(\Lambda_j (F,\nu)\bigr)(t)$
\begin{equation}\label{4.4}
q_j'(t) = r(t;u,\omega_j,0) =\widetilde r_j(t;u) + \nu(t) \omega'_j(R)  +  F(t) \psi_j(t) \in
\widetilde H^k(0,T).
\end{equation}
Therefore, if
\begin{equation}\label{4.5}
y_j(t) \equiv q'_j(t) - \widetilde r_j(t; u), \quad
\widetilde \Delta_1(t) \equiv 
\begin{vmatrix}
y_1(t) & \omega_1'(R)\\
y_2(t) & \omega_2'(R)
\end{vmatrix}, \quad
\widetilde \Delta_2(t) \equiv 
\begin{vmatrix}
\psi_1(t) & y_1(t) \\\psi_2(t) & y_2(t)
\end{vmatrix},
\end{equation}
it follows from \eqref{4.4}, \eqref{4.5} that
\begin{equation}\label{4.6}
(F,\nu)(t) = \Bigl(\frac{\widetilde \Delta_1(t)}{\Delta(t)}, \frac{\widetilde \Delta_2(t)}{\Delta(t)} \Bigr).
\end{equation}
Set for $j=1,\, 2$
\begin{equation}\label{4.7}
z_j(t) \equiv \varphi'_j(t) - \widetilde r_j(t; u) \in \widetilde H^k(0,T)
\end{equation}
and let
\begin{equation}\label{4.8}
\Delta_1(t) \equiv 
\begin{vmatrix}
z_1(t) & \omega_1'(R)\\
z_2(t) & \omega_2'(R)
\end{vmatrix}, \quad
\Delta_2(t) \equiv 
\begin{vmatrix}
\psi_1(t) & z_1(t) \\
\psi_2(t) & z_2(t)
\end{vmatrix},\quad \Delta_j \in \widetilde H^k(0,T).
\end{equation}
Introduce operators $A_j: \bigl(\widetilde H^k(0,T)\bigr)^2 \to 
\widetilde H^k(0,T)$ in the following way:
\begin{equation}\label{4.9}
A_j(F, h)(t) \equiv \frac{\Delta_j(t)}{\Delta(t)}.
\end{equation}
Let $A(F,\nu) = \bigl(A_1(F,\nu), A_2(F,\nu)\bigr) \in \bigl(\widetilde H^k(0,T)\bigr)^2$. 
Note that $(\varphi_1, \varphi_2) = \Lambda(F, \nu)$ if and only if $A(F, \nu)=(F, \nu)$.

Indeed, if $(\varphi_1, \varphi_2) = \Lambda(F,\nu)$ then $\varphi_j'(t) \equiv q_j'(t)$, $z_j(t) \equiv y_j(t)$ and equalities \eqref{4.5}, \eqref{4.8} yield that $\Delta_j(t) \equiv \widetilde\Delta_j(t)$ and so $A(F,\nu) = (F,\nu)$ because of \eqref{4.6}, \eqref{4.9}.

Vice versa, if $A(F,\nu)=(F, \nu)$, then $\Delta_j(t) \equiv \widetilde \Delta_j(t)$ and condition $\Delta(t)\ne 0$ implies that $\varphi_j'(t) \equiv q_j'(t)$. Since $q_j(0) = \varphi_j(0) =0$, the equality $\varphi_j(t) \equiv q_j(t)$ follows.

Next, we show that the operator $A$ is a contraction under the choice of a special norm in the space $\bigl(\widetilde H^k(0,T)\bigr)^2$. Note that $|\Delta(t)| \geq \Delta_0 >0$ for all $t\in [0,T]$.

Let $F_i, \nu_i \in \widetilde H^k(0,T)$, $i =1,\, 2$, $u_i \equiv S(0,0,0,\nu_i,F_i h,0)$, then according to \eqref{4.7}--\eqref{4.9}
\begin{multline}\label{4.10}
\bigl(A(F_1, \nu_1) - A(F_2, \nu_2)\bigr)(t) \\ = - \frac1{\Delta(t)}
\Bigl(
\begin{vmatrix}
\widetilde r_1(t;u_1-u_2) & \omega_1'(R)\\
\widetilde r_2(t;u_1-u_2) & \omega_2'(R)
\end{vmatrix}, 
\begin{vmatrix}
\psi_1(t) & \widetilde r_1(t;u_1-u_2) \\
\psi_2(t) & \widetilde r_2(t;u_1-u_2),
\end{vmatrix}  \Bigr),
\end{multline}
and since $\omega_j\in H^3(I)$, $\psi_j\in C^k[0,T]$, it follows from \eqref{4.3} and \eqref{3.6} that for $t\in [0,T]$
\begin{multline}\label{4.11}
\sum\limits_{m=0}^k \bigl|\partial_t^m\bigl(A(F_1, \nu_1) - A(F_2, \nu_2)\bigr)\bigr|(t) \leq
c \sum\limits_{m=0}^k \bigl\|\partial_t^m\bigl(u_1(t,x) - u_2(t,x)\bigr)\bigr\|_{L_2(I)} \\ \leq
c(T) \sum\limits_{m=0}^k \Bigl[ \|F_1^{(m)}(\tau) - F_2^{(m)}(\tau)\|_{L_2(0,t)} +
\|\nu_1^{(m)}(\tau) - \nu_2^{(m)}(\tau)\|_{L_2(0,t)}\Bigr].
\end{multline}
For $\gamma>0$ let
\begin{equation}\label{4.12} 
\|\varphi\|_{H_\gamma^k(0,T)} = \sum\limits_{m=0}^k \|e^{-\gamma t} \varphi^{(m)}(t)\|_{L_2(0,T)},
\end{equation}
then inequality \eqref{4.11} implies that 
\begin{multline}\label{4.13}
\|A(F_1,\nu_1) - A(F_2,\nu_2)\|_{(H^k_{\gamma}(0,T))^2} \\  \leq
c \sum\limits_{m=0}^k \Bigl[\int_0^T e^{-2\gamma t} \int_0^t 
\Bigl(\bigl(F_1^{(m)}(\tau) - F_2^{(m)}(\tau)\bigr)^2 + 
\bigl(\nu_1^{(m)}(\tau) - \nu_2^{(m)}(\tau)\bigr)^2\Bigr)\,d\tau\,dt\Bigr]^{1/2} \\ =
c \sum\limits_{m=0}^k \Bigl[\int_0^T e^{-2\gamma \tau} \Bigl(
\bigl(F_1^{(m)}(\tau) - F_2^{(m)}(\tau)\bigr)^2 + \bigl(\nu_1^{(m)}(\tau) - \nu_2^{(m)}(\tau)\bigr)^2 \Bigr)
\\ \times \int_\tau^T e^{2\gamma(\tau - t)}\,dt \,d\tau \Bigr]^{1/2} 
\leq \frac{c}{\gamma^{1/2}} \|(F_1,\nu_1) - (F_2,\nu_2)\|_{(H_{\gamma}^k(0,T))^2}.
\end{multline}
It remains to choose respectively large $\gamma$.

As a result, for any pair of functions $(\varphi_1, \varphi_2) \in \bigl(\widetilde H^{k+1}(0,T)\bigr)^2$ there exists a unique pair of functions $(F,\nu) \in \bigl(\widetilde H^k(0,T)\bigr)^2$ verifying $A(F,\nu)=(F, \nu)$, that is $\varphi_j = \Lambda_j (F,\nu)$. It means that the operator $\Lambda$ is invertible and so the Banach theorem implies that the inverse operator $\Gamma_1= \Lambda^{-1} : \bigl(\widetilde H^{k+1}(0,T)\bigr)^2 \to \bigl(\widetilde H^k(0,T)\bigr)^2$ is continuous. In particular,
\begin{equation}\label{4.14}
\|\Gamma_1(\varphi_1,\varphi_2)\|_{(\widetilde H^k(0,T))^2} \leq c(T) 
\|(\varphi_1,\varphi_2)\|_{(\widetilde H^{k+1}(0,T))^2}.
\end{equation}

For an arbitrary $T_1>T$ extend the functions $\varphi_j$ by the Taylor polynomials of degree $k$ with the center $T$ to the interval $(T,T_1)$, Then the extended functions $\varphi_j \in \widetilde H^{k+1}(0,T_1)$ and $\|\varphi_j^{(k+1)}\|_{L_2(0,T_1)} = 
\|\varphi_j^{(k+1)}\|_{L_2(0,T)}$. Then the analog of inequality \eqref{4.14} on the interval $(0,T_1)$ for such a function evidently holds with $c(T_1) \geq c(T)$. It means that the norm of the operator $\Gamma$ in non-decreasing in $T$.
\end{proof}

Lemma~\ref{L4.1} provides the following results on the first linear inverse problem. 

\begin{theorem}\label{T4.1}
Let 
\begin{itemize}
\item
$u_0 \in H^{3k}(I)$, $\mu_0, \nu_0 \in H^{k+1/3}(0,T)$, $h_0 \in M^k(Q_T)$, $f_{1x} \in M^k_1(Q_T)$ and
$\mu_0^{(m)}(0) = \widetilde\Phi_{m}(0;u_0,h_0+f_{1x})$, $\nu_0^{(m)}(0) = \widetilde\Phi_m(R;u_0,h_0+f_{1x})$ for $0\leq m\leq k-1$; 
\item
for $j=1, \, 2$ the functions $\omega_j$ satisfy condition \eqref{1.11}, $\varphi_j \in H^{k+1}(0,T)$  and
\begin{equation}\label{4.15}
\varphi_j^{(m)}(0) = \int_I \widetilde\Phi_m(x;u_0,h_0+f_{1x}) \omega_j(x) \,dx, \quad 0\leq m \leq k;
\end{equation}
\item
$h\in C^k([0,T]; L_2(I))$ and condition \eqref{1.14} be satisfied.
\end{itemize}
Then there exist a unique pair of functions $(F, \nu_1)$, satisfying
$F\in \widetilde H^k(0,T)$, $\nu_1 \in H^k(0,T)$, $\nu_1^{(m)}(0) = \widetilde\Phi_m'(R;u_0,h_0+f_{1x})$ for $0\leq m \leq k-1$, and the corresponding unique solution of problem \eqref{3.1}, \eqref{1.2}, \eqref{1.3} $u\in X^k(Q_T)$ satisfying conditions \eqref{1.4} for $j=1, \, 2$, where 
\begin{equation}\label{4.16}
f(t,x) \equiv h_0(t,x) + F(t)h(t,x) +f_{1x}(t,x).
\end{equation}
Moreover, if $\nu^* \in H^k(0,T)$ is an arbitrary function satisfying 
$(\nu^*)^{(m)}(0) = \widetilde\Phi'_m(R;u_0,h_0+f_{1x})$  for $0\leq m\leq k-1$ and if
\begin{equation}\label{4.17}
w \equiv S(u_0,\mu_0,\nu_0,\nu^*,h_0,f_1),
\end{equation}
then the functions $F$, $\nu_1$ and $u$ are given by formulas
\begin{gather}\label{4.18}
(F,\nu) = \Gamma_1\bigl(\varphi_1- Q(\omega_1)w, \varphi_2- Q(\omega_2)w \bigr),\quad
\nu_1 = \nu + \nu^*, \\
\label{4.19}
u = S(u_0,\mu_0,\nu_0,\nu_1,h_0+F h,f_1),
\end{gather}
where the operator $\Gamma_1$ is defined in Lemma~\ref{L4.1}.
\end{theorem}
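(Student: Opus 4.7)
My approach is to reduce the theorem to Lemma~\ref{L4.1} by splitting off a ``base'' solution that absorbs all the non-homogeneous initial and boundary data, leaving an inverse problem with zero initial and homogeneous Dirichlet-type boundary data in which only $(F,\nu)$ needs to be determined. First, I would choose any auxiliary $\nu^* \in H^k(0,T)$ with the prescribed jet $(\nu^*)^{(m)}(0) = \widetilde\Phi_m'(R;u_0,h_0+f_{1x})$, $0\le m\le k-1$ (e.g.\ a truncated Taylor polynomial). Theorem~\ref{T3.1} then yields $w := S(u_0,\mu_0,\nu_0,\nu^*,h_0,f_1) \in X^k(Q_T)$, and Lemma~\ref{L3.1} gives $Q(\omega_j)w \in H^{k+1}(0,T)$. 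The key reduction is to set $\widetilde\varphi_j := \varphi_j - Q(\omega_j) w$ and show that $\widetilde\varphi_j \in \widetilde H^{k+1}(0,T)$, i.e.\ all derivatives up to order $k$ vanish at $t=0$.

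The compatibility check at $t=0$ is the main obstacle. For $m=0$ it is trivial since $w(0,\cdot) = u_0$ and \eqref{4.15} with $m=0$ gives $\varphi_j(0) = \int_I u_0 \omega_j\,dx$. For $1\le m\le k$, I would apply formula \eqref{3.11} at $t=0$ to compute $q^{(m)}(0;w,\omega_j) = r(0;w,\omega_j,m-1)$, and independently unfold $\int_I \widetilde\Phi_m \omega_j\,dx$ via the recursion \eqref{3.4}, integrating $\int_I \widetilde\Phi_{m-1}''' \omega_j\,dx$ by parts three times and $\int_I \widetilde\Phi_{m-1}' \omega_j\,dx$ once. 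The boundary conditions \eqref{1.11} on $\omega_j$ kill most boundary terms; the three survivors involve $\widetilde\Phi_{m-1}(0)$, $\widetilde\Phi_{m-1}(R)$ and $\widetilde\Phi_{m-1}'(R)$, which by the compatibility assumptions equal $\mu_0^{(m-1)}(0)$, $\nu_0^{(m-1)}(0)$ and $(\nu^*)^{(m-1)}(0)$ respectively, matching exactly the three explicit boundary terms in $r(0;w,\omega_j,m-1)$. Combined with hypothesis \eqref{4.15}, this gives $\varphi_j^{(m)}(0) = q^{(m)}(0;w,\omega_j)$, so $(\widetilde\varphi_j)^{(m)}(0) = 0$ as required.

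Once $(\widetilde\varphi_1,\widetilde\varphi_2)\in (\widetilde H^{k+1}(0,T))^2$ is established, Lemma~\ref{L4.1} delivers a unique $(F,\nu) := \Gamma_1(\widetilde\varphi_1,\widetilde\varphi_2) \in (\widetilde H^k(0,T))^2$ for which $S(0,0,0,\nu,Fh,0)$ realises the reduced overdetermination $Q(\omega_j)S(0,0,0,\nu,Fh,0) = \widetilde\varphi_j$. Setting $\nu_1 := \nu+\nu^*$ and $u := S(u_0,\mu_0,\nu_0,\nu_1,h_0+Fh,f_1)$, linearity of $S$ gives $u - w = S(0,0,0,\nu,Fh,0)$, whence $Q(\omega_j) u = Q(\omega_j)w + \widetilde\varphi_j = \varphi_j$, proving \eqref{1.4}. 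The jet of $\nu_1$ at $t=0$ coincides with that of $\nu^*$ since $\nu \in \widetilde H^k(0,T)$, yielding the required compatibility $\nu_1^{(m)}(0) = \widetilde\Phi_m'(R;u_0,h_0+f_{1x})$. For uniqueness, given two triples $(F_i,\nu_{1,i},u_i)$, their differences satisfy $F_1-F_2, \nu_{1,1}-\nu_{1,2} \in \widetilde H^k(0,T)$ with $u_1-u_2 = S(0,0,0,\nu_{1,1}-\nu_{1,2},(F_1-F_2)h,0)$ and $Q(\omega_j)(u_1-u_2) = 0$; invertibility of the operator $\Lambda$ in Lemma~\ref{L4.1} then forces $F_1 = F_2$ and $\nu_{1,1} = \nu_{1,2}$, hence $u_1 = u_2$.
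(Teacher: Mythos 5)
Your proposal is correct and follows essentially the same route as the paper: split off the base solution $w = S(u_0,\mu_0,\nu_0,\nu^*,h_0,f_1)$, show $\varphi_j - Q(\omega_j)w \in \widetilde H^{k+1}(0,T)$, apply Lemma~\ref{L4.1} to get $(F,\nu)$, and conclude by linearity of $S$ together with invertibility of $\Lambda$ for uniqueness. The only cosmetic difference is your compatibility check: the paper obtains $\varphi_j^{(m)}(0)=q^{(m)}(0;w,\omega_j)$ directly from $\partial_t^m w\big|_{t=0}=\widetilde\Phi_m$ (Theorem~\ref{T3.1}) and \eqref{4.15}, whereas you re-derive the same identity by evaluating \eqref{3.11} at $t=0$ and integrating the recursion \eqref{3.4} by parts — a valid, merely more laborious, version of the same step.
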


\begin{proof}
The function $w$ is the solution of the problem
\begin{gather}\label{4.20}
w_t +bw_x +w_{xxx} = h_0 + f_{1x},\\
\label{4.21}
w\big|_{t=0} = u_0, \quad w\big|_{x=0} = \mu_0,\quad w\big|_{x=R}= \nu_0,\quad
w_x\big|_{x=R} = \widetilde\nu.
\end{gather}
The hypothesis of Theorem~\ref{T3.1} for this problem is satisfied and, therefore, the function
$w=  S(u_0,\mu_0,\nu_0,\nu^*,h_0,f_1) \in X^k(Q_T)$ exists. Moreover, 
since $\partial_t^m w(0,x) = \widetilde\Phi_m(0;u_0,h_0+f_{1x})$ for $0\leq m\leq k$, conditions \eqref{4.15} ensure that $\varphi_j^{(m)}(0) = q^{(m)}(0;w,\omega_j)$ and, therefore $\widetilde\varphi_j \equiv \varphi_j - Q(\omega_j)w \in \widetilde H^k(0,T)$.
As a result, $\Gamma_1(\widetilde\varphi_1, \widetilde\varphi_2) \in 
\bigl(\widetilde H^k(0,T)\bigr)^2$ is defined and so formulas \eqref{4.18}, \eqref{4.19} define the functions $F, \nu_1, u$ from the corresponding spaces. 

It follows, from \eqref{4.1}, \eqref{4.2}, \eqref{4.20}, \eqref{4.21} that the function $u= w + S(0,0,0,\nu,F h,0)$ is the solution of problem \eqref{3.1}, \eqref{1.2}, \eqref{1.3} for the function $f$ given by formula \eqref{4.16}. Moreover, $Q(\omega_j)u = Q(\omega_j)w + \widetilde\varphi_j = \varphi_j$, that is conditions \eqref{1.4} are satisfied.

Uniqueness of the functions $F$, $\nu_1$ and $u$ evidently succeeds from Lemma~\ref{L4.1} and Theorem~\ref{T3.1}.
\end{proof}

Now establish similar results for other inverse linear problems.

\begin{lemma}\label{L4.2}
Let the function $\omega_0$ satisfy condition \eqref{1.11}, 
$h\in C^k ([0,T];L_2(I))$ and condition \eqref{1.18} be satisfied. Let the function $\varphi_0 \in \widetilde H^{k+1}(0,T)$. Then there exists a unique function $F = \Gamma_2 \varphi_0 \in \widetilde H^k(0,T)$ such that the $u = S(0,0,0,0,F h,0) \in X^k(Q_T)$ verifies condition \eqref{1.4} for $j=0$. Moreover, the linear operator $\Gamma_2: \widetilde H^{k+1}(0,T) \to \widetilde H^k(0,T)$ is bounded and its norm does not decrease in $T$.
\end{lemma}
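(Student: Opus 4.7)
\medskip
\textbf{Proof plan.} The argument is the scalar analogue of Lemma~\ref{L4.1}. First, for any $F\in\widetilde H^k(0,T)$ the product $Fh$ lies in $M^k(Q_T)$ with $\widetilde\Phi_m(\cdot;0,Fh)\equiv 0$ for $0\le m\le k$, so Theorem~\ref{T3.1} yields $u[F]\equiv S(0,0,0,0,Fh,0)\in X^k(Q_T)$. The linear operator $\Lambda F\equiv Q(\omega_0)\,u[F]$ then maps $\widetilde H^k(0,T)$ boundedly into $\widetilde H^{k+1}(0,T)$ by Lemma~\ref{L3.1} (all relevant compatibility quantities vanish). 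The lemma reduces to showing that $\Lambda$ is a homeomorphism, in which case $\Gamma_2=\Lambda^{-1}$.

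Next, I would convert the equation $\Lambda F=\varphi_0$ into a fixed-point problem. Specialising \eqref{3.11} to $m=0$ with $\mu_0=\nu_0=\nu_1=0$, $f_0=Fh$, $f_1=0$ gives
$$
q_0'(t)=\widetilde r_0(t;u[F])+F(t)\psi_0(t),\qquad \widetilde r_0(t;u)\equiv\int_I u(t,x)\bigl(b\omega_0'(x)+\omega_0'''(x)\bigr)\,dx.
$$
Condition \eqref{1.18} guarantees $|\psi_0(t)|\ge\psi_0^{\min}>0$ on $[0,T]$, so (using $\varphi_0(0)=q_0(0)=0$ exactly as in Lemma~\ref{L4.1}) the equation $\Lambda F=\varphi_0$ is equivalent to the fixed-point equation $AF=F$ in $\widetilde H^k(0,T)$, where
$$
(AF)(t)\equiv\frac{\varphi_0'(t)-\widetilde r_0(t;u[F])}{\psi_0(t)}.
$$
Membership of $AF$ in $\widetilde H^k(0,T)$ is ensured by $\varphi_0\in\widetilde H^{k+1}(0,T)$, the vanishing of $\partial_t^m u[F]\big|_{t=0}$ for $0\le m\le k$, the smoothness $h\in C^k([0,T];L_2(I))$, and \eqref{1.18}.

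Finally I would show that $A$ is a contraction in the weighted norm \eqref{4.12} for $\gamma$ sufficiently large. By linearity $AF_1-AF_2=-\widetilde r_0(\cdot;u[F_1]-u[F_2])/\psi_0$, and since $\omega_0\in H^3(I)$, applying estimate \eqref{3.6} to the difference $u[F_1]-u[F_2]=S(0,0,0,0,(F_1-F_2)h,0)$ bounds $\sum_{m=0}^k|\partial_t^m(AF_1-AF_2)(t)|$ by $c(T)\sum_{m=0}^k\|F_1^{(m)}-F_2^{(m)}\|_{L_2(0,t)}$. Reproducing the Fubini-type calculation in \eqref{4.13} then yields
$$
\|AF_1-AF_2\|_{H_\gamma^k(0,T)}\le c\gamma^{-1/2}\|F_1-F_2\|_{H_\gamma^k(0,T)},
$$
so the Banach fixed-point theorem produces a unique $F$ for each $\varphi_0$; the Banach open mapping theorem then gives continuity of $\Gamma_2=\Lambda^{-1}$. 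Non-decrease of $\|\Gamma_2\|$ in $T$ follows from the Taylor-polynomial extension argument used at the end of Lemma~\ref{L4.1}. No genuine obstacle appears; the one point meriting attention is the verification that $AF$ indeed lies in $\widetilde H^k(0,T)$, but this is immediate from the ingredients listed above together with the nonvanishing of $\psi_0$.
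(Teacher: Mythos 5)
Your proposal is correct and follows essentially the same route as the paper: the same operator $\Lambda F = Q(\omega_0)S(0,0,0,0,Fh,0)$, the same fixed-point reformulation $AF = (\varphi_0' - \widetilde r_0(\cdot;u[F]))/\psi_0$, the same contraction argument in the weighted norm \eqref{4.12} for large $\gamma$, and the same Banach inverse-mapping and Taylor-extension arguments for the boundedness and monotonicity in $T$ of $\Gamma_2$. No gaps.
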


\begin{proof}
As in the proof of the previous lemma note, that if $F \in \widetilde H^k(0,T)$ then, in particular, $F h \in M^k(Q_T)$, $\widetilde\Phi_m(\cdot;0,F h) \equiv 0$ for $0\leq m\leq k$, so the hypothesis of Theorem~\ref{T3.1} for problem
\begin{gather}\label{4.22}
u_t +b u_x +u_{xxx} = F h,\\
\label{4.23}
u\big|_{t=0}=0,\quad u\big|_{x=0} = u\big|_{x=R} = u_x\big|_{x=R} =0
\end{gather}
is satisfied and the function $u= S(0,0,0,0,F h,0) \in X^k(Q_T)$ exists as the solution of this problem.

On the space $\widetilde H^{k}(0,T)$ introduce a linear operator $\Lambda F = Q(\omega_0)S(0,0,0,0,F h,0)$. Then Lemma~\ref{L3.1} provides that the operator $\Lambda$ acts from the space $\widetilde H^{k}(0,T)$ into the space $\widetilde H^{k+1}(0,T)$ and is bounded.

Note that the equality $\varphi_0 = \Lambda F$ for $F\in \widetilde H^k(0,T)$ obviously means that the function $F$ is the desired ones. 

Define for the function $u=  S(0,0,0,0,F h,0)$ the function $\widetilde r_0(t)$ by formula \eqref{4.3} for $j=0$.

Then it follows from \eqref{3.11} for $m=0$ (here $f_0 \equiv F h$, $f_1\equiv 0$, $\mu_0=\nu_0= \nu_1 \equiv 0$) that for $q_0(t) \equiv q(t;u,\omega_0) = 
(\Lambda F)(t)$
\begin{equation}\label{4.24}
q_0'(t) = r(t;u,\omega_0,0) =\widetilde r_0(t;u) +  F(t) \psi_0(t) \in
\widetilde H^k(0,T),
\end{equation}
therefore, 
\begin{equation}\label{4.25}
F(t) = \frac1{\psi_0(t)} \bigl(q_0'(t) - \widetilde r_0(t,u)\bigr).
\end{equation}
Introduce an operator $A: \widetilde H^k(0,T) \to \widetilde H^k(0,T)$ in the following way:
\begin{equation}\label{4.26}
AF(t) \equiv \frac1{\psi_0(t)} \bigl(\varphi_0'(t) - \widetilde r_0(t,u)\bigr).
\end{equation}
Then $\varphi_0 = \Lambda F$ if and only if $AF=F$.

Indeed, if $\varphi_0 = \Lambda F$ then $\varphi_0'(t) \equiv q_0'(t)$ and by \eqref{4.25}, \eqref{4.26} $AF = F$.

Vice versa, if $AF=F$, then $\varphi_0'(t) \equiv q_0'(t)$. Since $q_0(0) = \varphi_0(0) =0$ the equality $\varphi_0(t) \equiv q_0(t)$ follows.

Next, we show that the operator $A$ is a contraction under the choice of the same special norm \eqref{4.12} in the space $H^k(0,T)$. Note that $|\psi_0(t)| \geq \psi_0 >0$ for all $t\in [0,T]$.

Let $F_i \in \widetilde H^k(0,T)$, $i=1,\, 2$, $u_i \equiv S(0,0,0,0,F_i h,0)$, then according to \eqref{4.26}
\begin{equation}\label{4.27}
(A F_1 - A F_2)(t) = - \frac1{\psi_0(t)} \widetilde r_0(t;u_1-u_2)
\end{equation}
and similarly to \eqref{4.11} for $t\in [0,T]$
\begin{equation}\label{4.28}
\sum\limits_{m=0}^k \bigl|\partial_t^m (A F_1 - A F_2)\bigr|(t) \leq
c(T) \sum\limits_{m=0}^k \|F_1^{(m)}(\tau) - F_2^{(m)}(\tau)\|_{L_2(0,t)}.
\end{equation}
Then for $\gamma>0$ using formula \eqref{4.12} 
we derive that similarly to \eqref{4.13}
\begin{equation}\label{4.29}
\|A F_1 - A F_2\|_{H^k_{\gamma}(0,T)}   \leq
\frac{c}{\gamma^{1/2}} \|F_1 - F_2\|_{H^k_{\gamma}(0,T)}.
\end{equation}
As in the proof of Lemma~\ref{L4.1} it remains to choose respectively large $\gamma$.

As a result, for any function $\varphi_0 \in \widetilde H^{k+1}(0,T)$ there exists a unique  function $F \in \widetilde H^k(0,T)$ verifying $A F= F$, that is $\varphi_0 = \Lambda F$. It means that the operator $\Lambda$ is invertible and so the Banach theorem implies that the inverse operator $\Gamma_2= \Lambda^{-1} : \widetilde H^{k+1}(0,T) \to \widetilde H^k(0,T)(0,T)$ is continuous. In particular,
\begin{equation}\label{4.30}
\|\Gamma_2 \varphi_0\|_{\widetilde H^k(0,T)} \leq c(T) 
\|\varphi_0\|_{\widetilde H^{k+1}(0,T)}.
\end{equation}

Finally, as in the proof of Lemma~\ref{L4.1} one can show that the constant $c(T)$ in \eqref{4.30} does not decrease in $T$.
\end{proof}

\begin{theorem}\label{T4.2}
Let 
\begin{itemize}
\item
$u_0 \in H^{3k}(I)$, $\mu_0, \nu_0 \in H^{k+1/3}(0,T)$, $\nu_1 \in H^k(0,T)$, $h_0 \in M^k(Q_T)$, $f_{1x} \in M^k_1(Q_T)$ and
$\mu_0^{(m)}(0) = \widetilde\Phi_{m}(0;u_0,h_0+f_{1x})$, $\nu_0^{(m)}(0) = \widetilde\Phi_m(R;u_0,h_0+f_{1x})$, $\nu_1^{(m)}(0) = \widetilde\Phi'_m(R;u_0,h_0+f_{1x})$ for $0\leq m\leq k-1$; 
\item
the function $\omega_0$ satisfy condition \eqref{1.11}, $\varphi_0 \in H^{k+1}(0,T)$  and
condition \eqref{4.15} for $j=0$ be satisfied;
\item
$h\in C^k([0,T]; L_2(I))$ and condition \eqref{1.18} be satisfied.
\end{itemize}
Then there exist a unique function $F \in \widetilde H^k(0,T)$ and the corresponding unique solution of problem \eqref{3.1}, \eqref{1.2}, \eqref{1.3} $u\in X^k(Q_T)$, satisfying condition \eqref{1.4} for $j=0$, where the function $f$ is given by formula \eqref{4.16}.
Moreover, if 
\begin{equation}\label{4.31}
w \equiv S(u_0,\mu_0,\nu_0,\nu_1,h_0,f_1),
\end{equation}
then the functions $F$ and $u$ are given by formulas
\begin{equation}\label{4.32}
F = \Gamma_2 \bigl(\varphi_0- Q(\omega_0)w \bigr), \quad
u = S(u_0,\mu_0,\nu_0,\nu_1,h_0+ F h,f_1), 
\end{equation}
where the operator $\Gamma_2$ is defined in Lemma~\ref{L4.2}.
\end{theorem}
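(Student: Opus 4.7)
The plan is to mimic the proof of Theorem~\ref{T4.1}, but in the simpler situation of a single overdetermination condition, using Lemma~\ref{L4.2} in place of Lemma~\ref{L4.1}. First I would define the auxiliary function $w$ by formula \eqref{4.31}. The hypothesis of Theorem~\ref{T3.1} is met (with right-hand side $h_0 + f_{1x}$ split into $f_0 = h_0 \in M^k(Q_T)$ and the $f_{1x}$-component), so $w \in X^k(Q_T)$ exists uniquely and, by the second assertion of Theorem~\ref{T3.1}, $\partial_t^m w|_{t=0} = \widetilde\Phi_m(\cdot;u_0,h_0+f_{1x})$ for $0 \leq m \leq k$.

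Second, I would verify that $\widetilde\varphi_0 \equiv \varphi_0 - Q(\omega_0)w$ lies in $\widetilde H^{k+1}(0,T)$. Lemma~\ref{L3.1} guarantees that $Q(\omega_0)w \in H^{k+1}(0,T)$, so $\widetilde\varphi_0 \in H^{k+1}(0,T)$. The vanishing of derivatives at $t=0$ follows from condition \eqref{4.15} (for $j=0$) combined with the identity
\begin{equation*}
(Q(\omega_0)w)^{(m)}(0) = \int_I \partial_t^m w(0,x)\,\omega_0(x)\,dx = \int_I \widetilde\Phi_m(x;u_0,h_0+f_{1x})\,\omega_0(x)\,dx,\quad 0\leq m\leq k,
\end{equation*}
which matches $\varphi_0^{(m)}(0)$. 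Hence $\widetilde\varphi_0^{(m)}(0)=0$ for $0 \leq m \leq k$, so $\widetilde\varphi_0 \in \widetilde H^{k+1}(0,T)$ and Lemma~\ref{L4.2} applies to produce $F = \Gamma_2 \widetilde\varphi_0 \in \widetilde H^k(0,T)$.

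Third, I would define $u$ by the second formula of \eqref{4.32}. Since $F \in \widetilde H^k(0,T)$, the compatibility conditions for the augmented right-hand side $h_0 + Fh$ coincide with those for $h_0$ (indeed $\partial_t^m(Fh)|_{t=0} = 0$ for $0\leq m\leq k-1$), so Theorem~\ref{T3.1} again yields $u \in X^k(Q_T)$. By linearity of the map $S$,
\begin{equation*}
u = w + S(0,0,0,0,Fh,0),
\end{equation*}
and applying $Q(\omega_0)$ together with the defining relation $\Lambda F = Q(\omega_0)S(0,0,0,0,Fh,0) = \widetilde\varphi_0$ from Lemma~\ref{L4.2}, one obtains $Q(\omega_0)u = Q(\omega_0)w + \widetilde\varphi_0 = \varphi_0$, which is condition \eqref{1.4} for $j=0$.

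Finally, uniqueness. Suppose $(u_1,F_1)$ and $(u_2,F_2)$ are two solution pairs. Their difference satisfies the homogeneous initial-boundary problem with right-hand side $(F_1-F_2)h$ and zero overdetermination; Theorem~\ref{T3.1} gives $u_1 - u_2 = S(0,0,0,0,(F_1-F_2)h,0)$, and then the injectivity of $\Lambda$ established in Lemma~\ref{L4.2} forces $F_1 = F_2$ and consequently $u_1 = u_2$. I do not expect any substantive obstacle: the only step requiring care is the compatibility bookkeeping at $t=0$ in the second paragraph, where one must carefully use the second assertion of Theorem~\ref{T3.1} together with \eqref{4.15} to place $\widetilde\varphi_0$ in $\widetilde H^{k+1}(0,T)$ so that Lemma~\ref{L4.2} can be invoked.
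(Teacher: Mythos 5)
Your proposal is correct and follows essentially the same route as the paper: define $w$ by \eqref{4.31} via Theorem~\ref{T3.1}, use the compatibility conditions \eqref{4.15} together with $\partial_t^m w|_{t=0}=\widetilde\Phi_m(\cdot;u_0,h_0+f_{1x})$ to place $\varphi_0-Q(\omega_0)w$ in $\widetilde H^{k+1}(0,T)$, apply Lemma~\ref{L4.2}, and conclude by linearity of $S$ and the injectivity of $\Lambda$ for uniqueness. Your bookkeeping of the unchanged compatibility conditions after adding $Fh$ (since $\partial_t^m(Fh)|_{t=0}=0$) matches the paper's argument, which it carries out in the proof of Theorem~\ref{T4.1} and then cites.
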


\begin{proof}
The function $w$ is the solution of the problem
\begin{gather}\label{4.33}
w_t +bw_x +w_{xxx} = h_0 + f_{1x},\\
\label{4.34}
w\big|_{t=0} = u_0, \quad w\big|_{x=0} = \mu_0,\quad w\big|_{x=R}= \nu_0,\quad
w_x\big|_{x=R} = \nu_1.
\end{gather}
The hypothesis of Theorem~\ref{T3.1} for this problem is satisfied and, therefore, the function
$w= S(u_0,\mu_0,\nu_0,\nu_1,h_0,f_1) \in X^k(Q_T)$ exists. Moreover, 
since $\partial_t^m w(0,x) = \widetilde\Phi_m(0;u_0,h_0+f_{1x})$ for $0\leq m\leq k$, conditions \eqref{4.15} ensure that $\varphi_0^{(m)}(0) = q^{(m)}(0;w,\omega_0)$ and, therefore, $\widetilde\varphi_0 \equiv \varphi_0 - Q(\omega_0)w \in \widetilde H^k(0,T)$.
As a result, $\Gamma_2 \widetilde\varphi_0 \in 
\widetilde H^k(0,T)$ is defined and so formulas \eqref{4.32} define the functions $F, u$ from the corresponding spaces. 

The end of the proof is similar to Theorem~\ref{T4.1}.
\end{proof}

\begin{lemma}\label{L4.3}
Let the function $\omega_0$ satisfy condition \eqref{1.11} and condition \eqref{1.21} be satisfied. Let the function $\varphi_0 \in \widetilde H^{k+1}(0,T)$. Then there exists a unique function $\nu = \Gamma_3 \varphi_0 \in \widetilde H^k(0,T)$ such that the corresponding function $u = S(0,0,0,\nu,0,0) \in X^k(Q_T)$ verifies condition \eqref{1.4}  for $j=0$. Moreover, the linear operator $\Gamma_3: \widetilde H^{k+1}(0,T) \to \widetilde H^k(0,T)$ is bounded and its norm does not decrease in $T$.
\end{lemma}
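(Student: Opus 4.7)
The plan is to mimic the proof of Lemma~\ref{L4.2} almost line for line, since the only structural difference is that the unknown scalar coefficient $F(t)$ multiplying $\psi_0(t)$ in \eqref{4.24} is replaced here by the unknown boundary datum $\nu(t)$ multiplying the constant $\omega_0'(R)$, and the nonvanishing hypothesis \eqref{1.18} is replaced by \eqref{1.21}. First I would check that for any $\nu\in\widetilde H^k(0,T)$ the hypothesis of Theorem~\ref{T3.1} is satisfied for the problem with data $(u_0,\mu_0,\nu_0,\nu_1,f_0,f_1)=(0,0,0,\nu,0,0)$, so that $u = S(0,0,0,\nu,0,0) \in X^k(Q_T)$ is well defined, and the map $\Lambda\nu \equiv Q(\omega_0)S(0,0,0,\nu,0,0)$ acts boundedly from $\widetilde H^k(0,T)$ into $\widetilde H^{k+1}(0,T)$ by Lemma~\ref{L3.1}.

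Next I would extract the pointwise identity for $q_0 \equiv \Lambda\nu$. Formula \eqref{3.11} with $m=0$ and $f_0=f_1=\mu_0=\nu_0=0$, $\nu_1=\nu$ gives
\begin{equation*}
q_0'(t) = \widetilde r_0(t;u) + \nu(t)\,\omega_0'(R),
\end{equation*}
where $\widetilde r_0$ is defined by \eqref{4.3} for $j=0$. Hence the equation $\Lambda\nu=\varphi_0$ is equivalent to $\nu(t) = \frac{1}{\omega_0'(R)}\bigl(\varphi_0'(t) - \widetilde r_0(t;u)\bigr)$, which we read as a fixed-point equation $A\nu=\nu$ for the operator
\begin{equation*}
A\nu(t) \equiv \frac{1}{\omega_0'(R)}\bigl(\varphi_0'(t) - \widetilde r_0(t;S(0,0,0,\nu,0,0))\bigr),
\end{equation*}
mapping $\widetilde H^k(0,T)$ to itself thanks to \eqref{1.21} and to the fact that $\widetilde r_0(\cdot;u)\in\widetilde H^k(0,T)$ together with $\varphi_0'\in\widetilde H^k(0,T)$.

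The contraction argument then proceeds exactly as in Lemma~\ref{L4.2}. For two choices $\nu_1,\nu_2$ with corresponding $u_i=S(0,0,0,\nu_i,0,0)$ one has
\begin{equation*}
(A\nu_1 - A\nu_2)(t) = -\frac{1}{\omega_0'(R)}\,\widetilde r_0(t;u_1-u_2),
\end{equation*}
and, using \eqref{4.3} together with the linear estimate \eqref{3.6} applied to $u_1-u_2$, one obtains the analog of \eqref{4.28}, namely $\sum_{m=0}^k |\partial_t^m(A\nu_1-A\nu_2)|(t) \le c(T)\sum_{m=0}^k \|\nu_1^{(m)}-\nu_2^{(m)}\|_{L_2(0,t)}$. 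Then, working in the weighted norm \eqref{4.12}, the same switch-of-integration-order computation as in \eqref{4.13} yields
\begin{equation*}
\|A\nu_1 - A\nu_2\|_{H^k_\gamma(0,T)} \le \frac{c}{\gamma^{1/2}}\|\nu_1-\nu_2\|_{H^k_\gamma(0,T)},
\end{equation*}
so a sufficiently large $\gamma$ makes $A$ a contraction.

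From the unique fixed point I conclude that $\Lambda$ is a bijection from $\widetilde H^k(0,T)$ onto $\widetilde H^{k+1}(0,T)$, whence by the Banach theorem the inverse $\Gamma_3 := \Lambda^{-1}$ is bounded. The monotonicity of $\|\Gamma_3\|$ in $T$ is obtained exactly as at the end of the proof of Lemma~\ref{L4.1}: extend $\varphi_0$ from $(0,T)$ to a larger interval $(0,T_1)$ by its Taylor polynomial of degree $k$ centered at $T$, so that the extension lies in $\widetilde H^{k+1}(0,T_1)$ with $\|\varphi_0^{(k+1)}\|_{L_2(0,T_1)} = \|\varphi_0^{(k+1)}\|_{L_2(0,T)}$, and compare the two bounds. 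The only delicate point—essentially the only new ingredient beyond transcribing Lemma~\ref{L4.2}—is verifying that the range of $\Lambda$ lands in the smaller space $\widetilde H^{k+1}(0,T)$ rather than merely $H^{k+1}(0,T)$; this follows because $u\equiv 0$ when $\nu\equiv 0$ up to order $k-1$ at $t=0$ (the compatibility conditions for the auxiliary problem are trivial), forcing $q_0^{(m)}(0)=0$ for $0\le m\le k$.
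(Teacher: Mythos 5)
Your proposal is correct and follows essentially the same route as the paper's own proof: well-definedness of $S(0,0,0,\nu,0,0)$ via Theorem~\ref{T3.1}, the identity \eqref{3.11} with $m=0$ turning $\Lambda\nu=\varphi_0$ into the fixed-point equation $A\nu=\nu$ with $A\nu=\frac{1}{\omega_0'(R)}(\varphi_0'-\widetilde r_0)$, contraction in the weighted norm \eqref{4.12} for large $\gamma$, the Banach theorem for boundedness of $\Gamma_3=\Lambda^{-1}$, and the Taylor-polynomial extension for monotonicity of the norm in $T$. The only cosmetic difference is that you spell out why the range of $\Lambda$ lies in $\widetilde H^{k+1}(0,T)$, which the paper leaves implicit via the reduction to Lemma~\ref{L4.2}.
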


\begin{proof}
If $\nu \in \widetilde H^k(0,T)$ then $\widetilde\Phi(\cdot;0,0) \equiv 0$ for $0\leq m\leq k$, so the hypothesis of Theorem~\ref{T3.1} for problem
\begin{equation}\label{4.35}
u_t +b u_x +u_{xxx} = 0
\end{equation}
with initial and boundary conditions \eqref{4.2} is satisfied and the function $u= S(0,0,0,\nu,0,0) \in X^k(Q_T)$ exists as the solution of this problem.

On the space $\widetilde H^{k}(0,T)$ introduce a linear operator $\Lambda \nu = Q(\omega_0)S(0,0,0,\nu,0,0)$. Then Lemma~\ref{L3.1} provides that the operator $\Lambda$ acts from the space $\widetilde H^{k}(0,T)$ into the space $\widetilde H^{k+1}(0,T)$ and is bounded.

Note that the equality $\varphi_0 = \Lambda \nu$ for $\nu\in \widetilde H^k(0,T)$ obviously means that the function $\nu$ is the desired ones. 

Define for the function $u= S(0,0,0,\nu,0,0)$ the function $\widetilde r_0(t)$ by formula \eqref{4.3} for $j=0$.

Then it follows from \eqref{3.11} for $m=0$ (here $f_0 \equiv f_1\equiv 0$, $\mu_0=\nu_0  \equiv 0$, $\nu_1\equiv \nu$) that for $q_0(t) \equiv q(t;u,\omega_0) = 
(\Lambda F)(t)$
\begin{equation}\label{4.36}
q_0'(t) = r(t;u,\omega_0,0) =\widetilde r_0(t;u) + \nu(t) \omega'_0(R) \in
\widetilde H^k(0,T),
\end{equation}
therefore, 
\begin{equation}\label{4.37}
\nu(t) = \frac1{\omega'_0(R)} \bigl(q_0'(t) - \widetilde r_0(t,u)\bigr).
\end{equation}
Introduce operator $A: \widetilde H^k(0,T) \to \widetilde H^k(0,T)$ in the following way:
\begin{equation}\label{4.38}
A\nu(t) \equiv \frac1{\omega'_0(R)} \bigl(\varphi_0'(t) - \widetilde r_0(t,u)\bigr).
\end{equation}
Then similarly to the proof of Lemma~\ref{L4.2} $\varphi_0 = \Lambda\nu$ if and only if $A\nu=\nu$.

Next, as in the previous lemma we show that the operator $A$ is a contraction under the choice of norm \eqref{4.12} in the space $H^k(0,T)$.

Let $\nu_i \in \widetilde H^k(0,T)$, $i=1,\, 2$, $u_i \equiv  S(0,0,0,\nu_i,0,0)$, then according to \eqref{4.38}
\begin{equation}\label{4.39}
(A \nu_1 - A \nu_2)(t) = - \frac1{\omega'_0(R)} \widetilde r_0(t;u_1-u_2)
\end{equation}
and similarly to \eqref{4.11} for $t\in [0,T]$
\begin{equation}\label{4.40}
\sum\limits_{m=0}^k \bigl|\partial_t^m (A \nu_1 - A \nu_2)\bigr|(t) \leq
c(T) \sum\limits_{m=0}^k \|\nu_1^{(m)}(\tau) - \nu_2^{(m)}(\tau)\|_{L_2(0,t)}.
\end{equation}
Then for $\gamma>0$  
we derive that similarly to \eqref{4.13}
\begin{equation}\label{4.41}
\|A \nu_1 - A \nu_2\|_{H^k_{\gamma}(0,T)}   \leq
\frac{c}{\gamma^{1/2}} \|\nu_1 - \nu_2\|_{H^k_{\gamma}(0,T)}.
\end{equation}
As in the proof of Lemma~\ref{L4.1} it remains to choose respectively large $\gamma$.

The end of the proof is similar to Lemma~\ref{L4.2}, in particular, the inverse operator $\Gamma_3= \Lambda^{-1} : \widetilde H^{k+1}(0,T) \to \widetilde H^k(0,T)(0,T)$ exists, is continuous, estimate \eqref{4.30} (where $\Gamma_2$ is substituted by $\Gamma_3$) holds with the constant $c(T)$ non-decreasing in $T$.
\end{proof}

\begin{theorem}\label{T4.3}
Let 
\begin{itemize}
\item
$u_0 \in H^{3k}(I)$, $\mu_0, \nu_0 \in H^{k+1/3}(0,T)$, $f_0 \in M^k(Q_T)$, $f_{1x} \in M^k_1(Q_T)$  and
$\mu_0^{(m)}(0) = \widetilde\Phi_{m}(0;u_0,f_0+f_{1x})$, $\nu_0^{(m)}(0) = \widetilde\Phi_m(R;u_0,f_0+f_{1x})$ for $0\leq m\leq k-1$; 
\item
the function $\omega_0$ satisfy condition \eqref{1.11}, $\varphi_0 \in H^{k+1}(0,T)$  and
\begin{equation}\label{4.42}
\varphi_0^{(m)}(0) = \int_I \widetilde\Phi_m(x;u_0,f_0+f_{1x}) \omega_0(x) \,dx, \quad 0\leq m \leq k;
\end{equation}
\item
condition \eqref{1.21} be satisfied.
\end{itemize}
Then there exist a unique function $\nu_1 \in H^k(0,T)$, satisfying $\nu_1^{(m)}(0) = \widetilde\Phi'_m(R;u_0,f_0+f_{1x})$ for $0\leq m\leq k-1$, and the corresponding unique solution of problem \eqref{3.1}, \eqref{1.2}, \eqref{1.3} $u\in X^k(Q_T)$ satisfying condition \eqref{1.4} for $j=0$, where the function $f \equiv f_0 + f_{1x}$.
Moreover, if $\nu^* \in H^k(0,T)$ is an arbitrary function satisfying 
$(\nu^*)^{(m)}(0) = \widetilde\Phi'_m(R;u_0,f_0+f_{1x})$ for $0\leq m \leq k-1$ and if
\begin{equation}\label{4.43}
w \equiv S(u_0,\mu_0,\nu_0,\nu^*,f_0,f_1),
\end{equation}
then the functions $\nu_1$ and $u$ are given by formulas
\begin{equation}\label{4.44}
\nu = \Gamma_3 (\varphi_0- Q(\omega_0)w \bigr), \quad \nu_1 = \nu +\nu^*,\quad
u = S(u_0,\mu_0,\nu_0,\nu_1,f_0,f_1), 
\end{equation}
where the operator $\Gamma_3$ is defined in Lemma~\ref{L4.3}.
\end{theorem}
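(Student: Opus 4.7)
The plan is to follow the template of Theorems~\ref{T4.1} and \ref{T4.2}. First, I would fix an arbitrary $\nu^*\in H^k(0,T)$ whose Taylor coefficients at $t=0$ match the prescribed values $\widetilde\Phi'_m(R;u_0,f_0+f_{1x})$ for $0\le m\le k-1$; such a $\nu^*$ certainly exists (for instance, the corresponding Taylor polynomial of degree $k-1$). Applying Theorem~\ref{T3.1} then produces $w = S(u_0,\mu_0,\nu_0,\nu^*,f_0,f_1) \in X^k(Q_T)$, since the hypotheses on $\mu_0^{(m)}(0)$, $\nu_0^{(m)}(0)$ and $(\nu^*)^{(m)}(0)$ are exactly the compatibility conditions required.

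Next, I would verify that $\widetilde\varphi_0 \equiv \varphi_0 - Q(\omega_0)w$ belongs to $\widetilde H^{k+1}(0,T)$. Lemma~\ref{L3.1} guarantees $Q(\omega_0)w \in H^{k+1}(0,T)$. The last assertion of Theorem~\ref{T3.1} identifies $\partial_t^m w\big|_{t=0} \equiv \widetilde\Phi_m(\cdot;u_0,f_0+f_{1x})$ for $0\le m\le k$, so $(Q(\omega_0)w)^{(m)}(0) = \int_I \widetilde\Phi_m(x;u_0,f_0+f_{1x})\omega_0(x)\,dx$, which by the compatibility condition \eqref{4.42} equals $\varphi_0^{(m)}(0)$. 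Hence $\widetilde\varphi_0^{(m)}(0)=0$ for $0\le m\le k$, confirming $\widetilde\varphi_0 \in \widetilde H^{k+1}(0,T)$.

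Then Lemma~\ref{L4.3} supplies the unique $\nu = \Gamma_3\widetilde\varphi_0 \in \widetilde H^k(0,T)$ for which $Q(\omega_0)S(0,0,0,\nu,0,0) = \widetilde\varphi_0$. I would set $\nu_1 = \nu + \nu^*$; since $\nu$ annihilates all Taylor data at $0$ up to order $k-1$, the prescribed values of $\nu_1^{(m)}(0)$ are inherited from $\nu^*$. Linearity of problem \eqref{3.1}, \eqref{1.2}, \eqref{1.3} then yields $u := w + S(0,0,0,\nu,0,0) = S(u_0,\mu_0,\nu_0,\nu_1,f_0,f_1) \in X^k(Q_T)$ and $Q(\omega_0)u = Q(\omega_0)w + \widetilde\varphi_0 = \varphi_0$, that is, condition \eqref{1.4} for $j=0$.

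For uniqueness, I would subtract: if $(\widehat\nu_1,\widehat u)$ is another admissible pair, then $\widehat\nu := \widehat\nu_1 - \nu^*$ lies in $\widetilde H^k(0,T)$, and Theorem~\ref{T3.1} together with linearity identifies $\widehat u - w$ with $S(0,0,0,\widehat\nu,0,0)$; applying $Q(\omega_0)$ produces $Q(\omega_0)S(0,0,0,\widehat\nu,0,0) = \widetilde\varphi_0$, so the uniqueness part of Lemma~\ref{L4.3} forces $\widehat\nu = \nu$, hence $\widehat\nu_1 = \nu_1$ and $\widehat u = u$. There is no genuine obstacle: the argument is parallel to that of Theorem~\ref{T4.2}, with Lemma~\ref{L4.3} (invoking \eqref{1.21}) replacing Lemma~\ref{L4.2} (which invoked \eqref{1.18}). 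The only point requiring care is the verification, via \eqref{4.42} and the identity $\partial_t^m w\big|_{t=0} \equiv \widetilde\Phi_m$ from Theorem~\ref{T3.1}, that $\widetilde\varphi_0$ has enough vanishing derivatives to lie in $\widetilde H^{k+1}(0,T)$; this is precisely what enables $\Gamma_3$ to be applied.
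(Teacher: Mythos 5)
Your proposal is correct and follows essentially the same route as the paper: construct $w=S(u_0,\mu_0,\nu_0,\nu^*,f_0,f_1)$ via Theorem~\ref{T3.1}, use \eqref{4.42} together with $\partial_t^m w\big|_{t=0}=\widetilde\Phi_m$ to place $\varphi_0-Q(\omega_0)w$ in $\widetilde H^{k+1}(0,T)$, apply $\Gamma_3$ from Lemma~\ref{L4.3}, and conclude by linearity, with uniqueness reduced to Lemma~\ref{L4.3} and Theorem~\ref{T3.1}. Your uniqueness step is merely spelled out in a bit more detail than the paper's brief reference to the argument of Theorem~\ref{T4.1}.
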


\begin{proof}
The function $w$ is the solution of the problem
\begin{equation}\label{4.45}
w_t +bw_x +w_{xxx} = f_0 + f_{1x}
\end{equation}
with initial and boundary conditions \eqref{4.23}.
The hypothesis of Theorem~\ref{T3.1} for this problem is satisfied and, therefore, the function
$w= S(u_0,\mu_0,\nu_0,\nu^*,f_0,f_1) \in X^k(Q_T)$ exists. Moreover, 
since $\partial_t^m w(0,x) = \widetilde\Phi_m(0;u_0,f_0+f_{1x})$ for $0\leq m\leq k$, conditions \eqref{4.42} ensure that $\varphi_0^{(m)}(0) = q^{(m)}(0;w,\omega_0)$ and, therefore, $\widetilde\varphi_0 \equiv \varphi_0 - Q(\omega_0)w \in \widetilde H^k(0,T)$.
As a result, $\Gamma_3 \widetilde\varphi_0 \in 
\widetilde H^k(0,T)$ is defined and so formulas \eqref{4.44} define the functions $\nu_1, u$ from the corresponding spaces. 

The end of the proof is similar to Theorem~\ref{T4.1}.
\end{proof}

\section{Nonlinear results}\label{S5}

\begin{proof}[Proof of Theorem \ref{T1.1}]
For $u_0 \in H^{3k}(I)$, $h_0\in M(Q_T)$ introduce a set
\begin{equation}\label{5.1}
X^k_{u_0,h_0}(Q_T) = \{u\in X^k(Q_T): \partial_t^m u\big|_{t=0} = \Phi_m(\cdot;u_0,h_0),\quad 0\leq m\leq k\}.
\end{equation}
For any $v\in X^k_{u_0,h_0}(Q_T)$ consider a map 
\begin{equation}\label{5.2}
u =\Theta_1 v \equiv  S(u_0,\mu_0,\nu_0,\nu_1,h_0+F h,-g(v)),
\end{equation}
where
\begin{gather}\label{5.3}
(F,\nu) = \Gamma_1\bigl(\varphi_1- Q(\omega_1)w, \varphi_2- Q(\omega_2)w \bigr), \\
\label{5.4}
w= S(u_0,\mu_0,\nu_0,\nu^*,h_0,- g(v)),\quad
\nu_1 = \nu + \nu^*,
\end{gather}
$\nu^*$ is the Taylor polynomial of the degree $k-1$ with the center $0$, satisfying    
$(\nu^*)^{(m)}(0) = \Phi'_m(R;u_0,h_0)$ for $0\leq m\leq k-1$, and the map $\Gamma_1$ is defined in Lemma~\ref{L4.1}.

Inequalities \eqref{2.4} and \eqref{2.7} yield that $(g(v))_x \in M_1^k(Q_T)$ and
\begin{equation}\label{5.5}
\|(g(v))_x\|_{M_1^k(Q_T)} \leq T^{1/2} \beta(\|v\|_{X^k(Q_T)}) \|v\|^2_{X^k(Q_T)} + \lambda(\rho(v)) \rho(v).
\end{equation}
In turn, inequality \eqref{2.10} yield that
\begin{equation}\label{5.6}
\rho(v) = \sum\limits_{m=0}^{k-1} \|\Phi_m(\cdot;u_0,h_0)\|_{H^{3(k-m)}(I)} \leq \sigma(\varkappa(u_0,h_0)) \varkappa(u_0,h_0).
\end{equation}
As a result, 
\begin{multline}\label{5.7}
\|(g(v))_x\|_{M_1^k(Q_T)} \\ \leq T^{1/2} \beta(\|v\|_{X^k(Q_T)}) \|v\|^2_{X^k(Q_T)} + \lambda\bigl(\sigma(\varkappa(u_0,h_0))\bigr) \sigma(\varkappa(u_0,h_0)) \varkappa(u_0,h_0).
\end{multline}

Next, equalities \eqref{1.10} and \eqref{3.4} ensure that 
\begin{equation}\label{5.8}
\Phi_m(x;u_0,h_0) \equiv \widetilde\Phi_m(x;u_0,h_0 - (g(v))_x), \quad 0\leq m \leq k. 
\end{equation}
In particular, it means that $\mu_0^{(m)}(0) = \widetilde\Phi_m(0;u_0,h_0-(g(v))_x)$, $\nu_0^{(m)}(0) = \widetilde\Phi_m(R;u_0,h_0-(g(v))_x)$, 
$(\nu^*)^{(m)}(0) = \widetilde\Phi'_m(R;u_0,h_0-(g(v))_x)$ for $0\leq m\leq k-1$, and, therefore, by Theorem~\ref{T3.1} the function $w\in X^k_{u_0,h_0}(Q_T)$, defined by formula \eqref{5.4}, exists and by Lemma~\ref{L3.1} $Q(\omega_j)w \in H^{(k+1)}(0,T)$.
Moreover, conditions \eqref{1.13} imply that 
\begin{equation}\label{5.9}
\varphi_j^{(m)}(0) = \bigl(Q(\omega_j)w\bigr)^{(m)}(0), \quad 0\leq m\leq k,
\end{equation}
therefore, $\varphi_j - Q(\omega_j)w \in \widetilde H^{k+1}(0,T)$ and the functions $(F,\nu) \in \bigl(\widetilde H^k(0,T)\bigr)^2$, defined by formula \eqref{5.3}, exist. 

For $0\leq m\leq k-1$ we have $\nu_1^{(m)}(0) = (\nu^*)^{(m)}(0)$ and since $\partial_t^m (F h)\big|_{t=0} \equiv 0$
\begin{equation}\label{5.10}
\Phi_m(x;u_0, h_0+F h) \equiv \Phi_m(x;u_0,h_0),
\end{equation}
therefore, the function $u\in X^k_{u_0,h_0}(Q_T)$, defined by formula \eqref{5.2}, exists, that is  $\Theta_1: X^k_{u_0,h_0}(Q_T) \to X^k_{u_0,h_0}(Q_T)$.

Next, obviously
\begin{equation}\label{5.11}
\|\nu^*\|_{H^k(0,T)} \leq c(T)\sum\limits_{m=0}^{k-1} |\Phi'_m(R;u_0,h_0)| \leq
c_1(T) \sum\limits_{m=0}^{k-1} \|\Phi_m(\cdot;u_0,h_0)\|_{H^{3(k-m)}(I)},
\end{equation}
by \eqref{3.6}
\begin{multline}\label{5.12}
\|w\|_{X^k(Q_T)} \leq c(T) \Bigl[ \|u_0\|_{H^{3k}(I)} + \|\mu_0\|_{H^{k+1/3}(0,T)} + 
\|\nu_0\|_{H^{k+1/3}(0,T)} + \|\nu^*\|_{H^k(0,T)} \\+
\|h_0\|_{M^k(Q_T)} + \|(g(v))_x\|_{M^k_1(Q_T)} \Bigr],
\end{multline}
by \eqref{3.12} 
\begin{multline}\label{5.13}
\bigl\|\bigl(Q(\omega_j)w\bigr)^{(k+1)}\bigr\|_{L_2(0,T)} \leq c(T)\Bigl[\|w\|_{X^k(Q_T)} + \|\mu_0\|_{H^k0,T)} + \|\nu_0\|_{H^k(0,T)}
\\+ \|h_0\|_{M^k(Q_T)} + \|(g(v))_x\|_{M^k_1(Q_T)} \Bigr],
\end{multline}
by Lemma~\ref{L4.1}
\begin{equation}\label{5.14}
\|\nu\|_{H^k(0,T)} + \|F\|_{\widetilde H^k(0,T)} \leq c(T) \sum\limits_{j=1}^2 \Bigl[\|\varphi_j^{(k+1)}\|_{L_2(0,T)} +
\bigl\|\bigl(Q(\omega_j)w\bigr)^{(k+1)}\bigr\|_{L_2(0,T)} \Bigr],
\end{equation}
since $\partial_t^m F\big|_{t=0} \equiv 0$ for $0\leq m\leq k-1$
\begin{equation}\label{5.15}
\|F h\|_{M^k(Q_T)} \leq c(T) \|F\|_{\widetilde H^k(0,T)}
\end{equation}
and again by \eqref{3.12}
\begin{multline}\label{5.16}
\|\Theta_1 v\|_{X^k(Q_T)} \leq c(T) \Bigl[ \|u_0\|_{H^{3k}(I)} + \|\mu_0\|_{H^{k+1/3}(0,T)} + \|\nu_0\|_{H^{k+1/3}(0,T)} +  \|\nu\|_{H^k(0,T)} \\+
\|\nu^*\|_{H^k(0,T)} + \|h_0\|_{M^k(Q_T)} + \|F h\|_{M^k(Q_T)} + \|(g(v))_x\|_{M^k_1(Q_T)} \Bigr].
\end{multline} 
Let
\begin{equation}\label{5.17}
\alpha(\delta) \equiv 1 +  \sigma(\delta) +\lambda\bigl(\sigma(\delta)\bigr) \sigma(\delta),
\end{equation}
then combining together \eqref{2.10}, \eqref{5.7}, \eqref{5.11}--\eqref{5.15}
we derive from \eqref{5.16}, \eqref{5.17} an inequality
\begin{equation}\label{5.18}
\|\Theta_1 v\|_{X^k(Q_T)} \leq c(T)\alpha(c_1)c_1 + c(T)T^{1/2} \beta(\|v\|_{X^k(Q_T)}) \|v\|^2_{X^k(Q_T)},
\end{equation}
where $c_1= c_1(u_0,\mu_0,\nu_0,h_0,\varphi_1,\varphi_2)$ is given by formula \eqref{1.15}.

Now let $v,\widetilde v \in X^k_{u_0,h_0}(Q_T)$, $\|v\|_{X^k(Q_T)}, 
\|\widetilde v\|_{X^k(Q_T)} \leq r$, then
\begin{gather}\label{5.19}
\Theta_1 v - \Theta_1 \widetilde v = S(0,0,0,\nu - 
\widetilde\nu, (F-\widetilde F)h, g(\widetilde v) - g(v)),\\
\label{5.20}
(F-\widetilde F, \nu-\widetilde\nu) = \Gamma_1\bigl(Q(\omega_1)(\widetilde w -w), Q(\omega_2)(\widetilde w -w)\bigr),\\
\label{5.21}
w-\widetilde w = S(0,0,0,0,0,g(\widetilde v)-g(v)).
\end{gather}
Here by \eqref{2.3}
\begin{equation}\label{5.22}
\bigl\|\bigl(g(v) - g(\widetilde v)\bigr)_x\bigr\|_{M_1^k(Q_T)} \leq T^{1/2}\beta(r)r \|u-v\|_{X^k(Q_T)},
\end{equation}
similarly to \eqref{5.12}
\begin{equation}\label{5.23}
\|w-\widetilde w\|_{X^k(Q_T)} \leq c(T) \bigl\|\bigl(g(v) - g(\widetilde v)\bigr)_x\bigr\|_{M_1^k(Q_T)},
\end{equation}
similarly to \eqref{5.13}
\begin{multline}\label{5.24}
\bigl\|\bigl(Q(\omega_j)(w-\widetilde w)\bigr)^{(k+1)}\bigr\|_{L_2(0,T)} \\ \leq c(T)\Bigl[\|w-\widetilde w\|_{X^k(Q_T)} + 
\bigl\|\bigl(g(v) - g(\widetilde v)\bigr)_x\bigr\|_{M_1^k(Q_T)}\Bigr],
\end{multline}
similarly to \eqref{5.14}
\begin{equation}\label{5.25}
\|\nu-\widetilde\nu\|_{W_2^k(0,T)} + \|F-\widetilde F\|_{\widetilde W_2^k(0,T)} \leq c(T) \sum\limits_{j=1}^2 
\bigl\|\bigl(Q(\omega_j)(w-\widetilde w)\bigr)^{(k+1)}\bigr\|_{L_2(0,T)},
\end{equation}
similarly to \eqref{5.16}
\begin{multline}\label{5.26}
\|\Theta_1 v - \Theta_1\widetilde v\|_{X^k(Q_T)} \\ \leq c(T) \Bigl[\|\nu-\widetilde\nu\|_{H^k(0,T)} + \|(F-\widetilde F) h\|_{M^k(Q_T)} + 
\bigl\|\bigl(g(v) - g(\widetilde v)\bigr)_x\bigr\|_{M_1^k(Q_T)} \Bigr]
\end{multline}
and, therefore, for the same constant $c(T)$ as in \eqref{5.18}
\begin{equation}\label{5.27}
\|\Theta_1 v -\Theta_1\widetilde v\|_{X^k(Q_T)} \leq c(T)T^{1/2} \beta(r)r \|v-\widetilde v\|_{X^k(Q_T)}.
\end{equation}

Now assume that $\delta>0$ is fixed. Choose $T_0>0$ and $r>0$ such that
\begin{equation}\label{5.28}
r \geq 2c(T_0)\alpha(\delta)\delta,\quad 2c(T_0) T_0^{1/2} \beta(r) r \leq 1
\end{equation}
(for example, one can first take arbitrary $T_0^*>0$, then choose $r>0$, satisfying $r\geq 
2c(T_0^*)\alpha(\delta)\delta$, and finally choose $T_0\in (0,T_0^*]$, satisfying $2c(T_0^*) T_0^{1/2} \beta(r) r \leq 1$).

If $T_0>0$ is fixed, first choose $r>0$, satisfying $2c(T_0) T_0^{1/2} \beta(r) r \leq 1$, and then $\delta>0$, satisfying $2c(T_0)\alpha(\delta)\delta \leq r$. 

Then in both cases, inequalities \eqref{5.28} are verified and it follows from estimates \eqref{5.18} and \eqref{5.27} that if $c_1\leq \delta$ and $T\in (0,T_0]$ the map $\Theta_1$ on the zero centered ball of the radius $r$ in the set $X_{u_0,f}$ is the contraction. The unique fixed point $u = S(u_0,\mu_0,\nu_0, \nu_1, h_0 +F h, -g(u))$ is the desired solution with the unique pair of functions $(F,h)$, given by formulas \eqref{5.3}, \eqref{5.4} (for $v\equiv u$).

To prove uniqueness in the whole space, assume that there are two different solutions $u, \widetilde u \in X^k(Q_T)$. Let $\|u\|_{X^k(Q_T)}, \|\widetilde u\|_{X^k(Q_T)} \leq r$, 
$T* = \max\{t: u(t,x) \equiv \widetilde u(t,x)\}$, then $T^*\in[0, T)$. Similarly to \eqref{5.19}--\eqref{5.20}
\begin{gather*}
u - \widetilde u = S(0,0,0,\nu - \widetilde\nu, (F-\widetilde F)h, g(\widetilde u) - g(u)),\\
(F-\widetilde F, \nu-\widetilde\nu) = \Gamma_1\bigl(Q(\omega_1)(\widetilde w -w), Q(\omega_2)(\widetilde w -w)\bigr),\\
w-\widetilde w = S(0,0,0,0,0,g(\widetilde u)-g(u)),
\end{gather*}
where similarly to \eqref{5.22}--\eqref{5.26} for any $T_0 \in (T^*, T]$ according to \eqref{2.3} 
\begin{multline*}
\|u - \widetilde u\|_{X^k(Q_{T_0})} \leq c(T) 
\bigl\|\bigl(g(u) - g(\widetilde u)\bigr)_x\bigr\|_{M_1^k(Q_{T_0})} \\= 
\sum\limits_{m=0}^k \bigl\|\partial_t^m \bigl(g(u) - 
g(\widetilde u)\bigr)\bigr\|_{L_2((T^*,T_0)\times I)} \leq 
c(T) (T_0 - T^*)^{1/2} \beta(r) r\|u - \widetilde u\|_{X^k(Q_{T_0})},
\end{multline*}
whence for $T_0$ sufficiently close to $T^*$ follows that $u(t,x) \equiv \widetilde u(t,x)$ if $t\in [0,T_0]$. Contradiction.

To prove continuous dependence, note that if
$c_1(u_0,\mu_0,\nu_0,h_0,\varphi_1,\varphi_2) \leq \delta$, $c_1(\widetilde u_0,\widetilde\mu_0,\widetilde\nu_0,\widetilde h_0,\widetilde\varphi_1,\widetilde\varphi_2) \leq \delta$,
the numbers $T_0$ and $r$ satisfy \eqref{5.28}, then the corresponding solutions $u, \widetilde u$ are the fixed points of the maps $\Theta_1$ for the corresponding input data and similarly to \eqref{5.18}, \eqref{5.27} with the use of \eqref{2.3}, \eqref{2.6}, \eqref{2.9}
\begin{multline}\label{5.29}
\|u - \widetilde u\|_{X^k(Q_{T})} \leq c(T) \alpha(\delta) \Bigl[ 
\|u_0 - \widetilde u_0\|_{H^{3k}(I)} +
\|\mu_0 - \widetilde\mu_0\|_{H^{k+1/3}(0,T)} \\+
\|\nu_0 - \widetilde\nu_0\|_{H^{k+1/3}(0,T)} +
\|h_0 - \widetilde h_0\|_{M^k(Q_T)} +
\|\varphi_1^{(k+1)} - \widetilde\varphi_1^{(k+1)}\|_{L_2(0,T)} \\+
\|\varphi_2^{(k+1)} - \widetilde\varphi_2^{(k+1)}\|_{L_2(0,T)} \Bigr] +
c(T) T^{1/2} \beta(r) r\|u - \widetilde u\|_{X^k(Q_{T})},
\end{multline}
whence the desired assertion follows.
\end{proof}

\begin{proof}[Proof of Theorem \ref{T1.2}]
For any $v\in X^k_{u_0,h_0}(Q_T)$ consider a map 
\begin{equation}\label{5.30}
u =\Theta_2 v \equiv  S(u_0,\mu_0,\nu_0,\nu_1,h_0+F h,-g(v)),
\end{equation}
where
\begin{equation}\label{5.31}
F = \Gamma_2\bigl(\varphi_0- Q(\omega_0)w \bigr), \quad
w= S(u_0,\mu_0,\nu_0,\nu_1,h_0,- g(v))
\end{equation}
and the map $\Gamma_2$ is defined in Lemma~\ref{L4.2}.

In comparison with the proof of the previous theorem here 
$\nu_1^{(m)}(0) = \widetilde\Phi'_m(R;u_0,h_0-(g(v))_x)$ for $0\leq m\leq k-1$, and, therefore, by Theorem~\ref{T3.1} the function $w\in X^k_{u_0,h_0}(Q_T)$, defined by formula \eqref{5.31}, exists. By Lemma~\ref{L3.1} $Q(\omega_0)w \in H^{(k+1)}(0,T)$ and
conditions \eqref{1.17} imply that 
\begin{equation}\label{5.32}
\varphi_0^{(m)}(0) = \bigl(Q(\omega_0)w\bigr)^{(m)}(0), \quad 0\leq m\leq k,
\end{equation}
therefore, $\varphi_0 - Q(\omega_0)w \in \widetilde H^{k+1}(0,T)$ and the function $F \in \widetilde H^k(0,T)$, defined by formula \eqref{5.31}, exists. Moreover, equality \eqref{5.10} is also valid and, therefore, the function $u\in X^k_{u_0,h_0}(Q_T)$, defined by formula \eqref{5.30}, exists, that is  $\Theta_2: X^k_{u_0,h_0}(Q_T) \to X^k_{u_0,h_0}(Q_T)$.

Next, similarly to \eqref{5.12}--\eqref{5.18} we find that
\begin{equation}\label{5.33}
\|\Theta_2 v\|_{X^k(Q_T)} \leq c(T)\alpha(c_2)c_2 + c(T)T^{1/2} \beta(\|v\|_{X^k(Q_T)}) \|v\|^2_{X^k(Q_T)},
\end{equation}
where $c_2= c_2(u_0,\mu_0,\nu_0,\nu_1,h_0,\varphi_0)$ is given by formula \eqref{1.19}, and similarly to \eqref{5.19}--\eqref{5.27} --- that
\begin{equation}\label{5.34}
\|\Theta_2 v -\Theta_2\widetilde v\|_{X^k(Q_T)} \leq c(T)T^{1/2} \beta(r)r \|v-\widetilde v\|_{X^k(Q_T)}.
\end{equation}
The end of the proof is the same as for the previous theorem. 
\end{proof}

\begin{proof}[Proof of Theorem \ref{T1.3}]
For any $v\in X^k_{u_0,h_0}(Q_T)$ consider a map 
\begin{equation}\label{5.35}
u =\Theta_3 v \equiv  S(u_0,\mu_0,\nu_0,\nu_1,f,-g(v)),
\end{equation}
where
\begin{equation}\label{5.36}
\nu = \Gamma_3\bigl(\varphi_0- Q(\omega_0)w\bigr), \quad
w= S(u_0,\mu_0,\nu_0,\nu^*,f,- g(v)),\quad
\nu_1 = \nu + \nu^*,
\end{equation}
$\nu^*$ is the Taylor polynomial of the degree $k-1$ with the center $0$, satisfying    
$(\nu^*)^{(m)}(0) = \Phi'_m(R;u_0,f)$ for $0\leq m\leq k-1$, and the map $\Gamma_3$ is defined in Lemma~\ref{L4.3}.

Here similarly to \eqref{5.8} 
\begin{equation}\label{5.37}
\Phi_m(x;u_0,f) \equiv \widetilde\Phi_m(x;u_0,f - (g(v))_x), \quad 0\leq m \leq k. 
\end{equation}
In particular, it means that $\mu_0^{(m)}(0) = \widetilde\Phi_m(0;u_0,f-(g(v))_x)$, $\nu_0^{(m)}(0) = \widetilde\Phi_m(R;u_0,f-(g(v))_x)$, 
$(\nu^*)^{(m)}(0) = \widetilde\Phi'_m(R;u_0,f-(g(v))_x)$ for $0\leq m\leq k-1$, and, therefore, by Theorem~\ref{T3.1} the function $w\in X^k_{u_0,h_0}(Q_T)$, defined by formula \eqref{5.36}, exists. By Lemma~\ref{L3.1} $Q(\omega_j)w \in H^{(k+1)}(0,T)$,
equalities \eqref{5.32} are satisfied,  
therefore, $\varphi_0 - Q(\omega_0)w \in \widetilde H^{k+1}(0,T)$ and the function $\nu \in \widetilde H^k(0,T)$, defined by formula \eqref{5.36}, exists. As a result,
the function $u\in X^k_{u_0,h_0}(Q_T)$, defined by formula \eqref{5.35}, exists, that is  $\Theta_3: X^k_{u_0,h_0}(Q_T) \to X^k_{u_0,h_0}(Q_T)$.

Then similarly to \eqref{5.11}-- \eqref{5.27} we derive estimates
\begin{gather}\label{5.38}
\|\Theta_3 v\|_{X^k(Q_T)} \leq c(T)\alpha(c_3)c_3 + c(T)T^{1/2} \beta(\|v\|_{X^k(Q_T)}) \|v\|^2_{X^k(Q_T)},\\
\label{5.39}
\|\Theta_3 v -\Theta_3\widetilde v\|_{X^k(Q_T)} \leq c(T)T^{1/2} \beta(r)r \|v-\widetilde v\|_{X^k(Q_T)},
\end{gather}
where $c_3= c_3(u_0,\mu_0,\nu_0,f,\varphi_0)$ is given by formula \eqref{1.22}.
The end of the proof is the same as for the previous theorems. 
\end{proof}

\begin{proof}[Proof of Theorem \ref{T1.4}]
For any $v\in X^k_{u_0,h_0}(Q_T)$ consider a map 
\begin{equation}\label{5.40}
u =\Theta_4 v \equiv  S(u_0,\mu_0,\nu_0,\nu_1,f,-g(v)).
\end{equation}

Here equalities \eqref{5.37} hold  
and, therefore, by Theorem~\ref{T3.1} the function $u\in X^k_{u_0,h_0}(Q_T)$, defined by formula \eqref{5.40}, exists, that is  $\Theta_4: X^k_{u_0,h_0}(Q_T) \to X^k_{u_0,h_0}(Q_T)$.

Then similarly to \eqref{5.38}, \eqref{5.39} we derive estimates
\begin{gather}\label{5.41}
\|\Theta_4 v\|_{X^k(Q_T)} \leq c(T)\alpha(c_4)c_4 + c(T)T^{1/2} \beta(\|v\|_{X^k(Q_T)}) \|v\|^2_{X^k(Q_T)},\\
\label{5.42}
\|\Theta_4 v -\Theta_3\widetilde v\|_{X^k(Q_T)} \leq c(T)T^{1/2} \beta(r)r \|v-\widetilde v\|_{X^k(Q_T)},
\end{gather}
where $c_4= c_4(u_0,\mu_0,\nu_0,\nu_1,f)$ is given by formula \eqref{1.24}.
The end of the proof is the same as for the previous theorems. 
\end{proof}

\begin{proof}[Proof of Corollary \ref{C1.1}]
For any $0\leq m \leq k$ we prove by induction with respect to $(k-m)$ that for $0\leq j\leq m$
\begin{equation}\label{5.43}
\partial_t^j u \in C([0,T];H^{3(k-m)}(I)) \cap L_2(0,T; H^{3(k-m)+1}(I)).
\end{equation}
For $m=k$ it is equivalent to the property $u\in X^k(Q_T)$. Let $m<k$. Then it follows from \eqref{5.43} for $(m+1)$ that
\begin{equation}\label{5.44}
\partial_t^j u \in C([0,T]; H^{3(k-m) -2}(I)), \quad 0\leq j \leq m,
\end{equation}
and, therefore, 
\begin{equation}\label{5.45}
\partial_t^j \partial_x^n u \in C(\overline Q_T),\quad 0\leq j \leq m,\ 0\leq n \leq 3(k-m-1).
\end{equation}
It follows from equality \eqref{1.1} that for $i=0,\, 1$
\begin{equation}\label{5.46}
\partial_t^j \partial_x^{3(k-m)+i} u= \partial_t^j\partial_x^{3(k-m-1)+i}
\bigl(f - u_t -b u_x - (g(u))_x\bigr).
\end{equation}
Then with the use of \eqref{5.43} for $(m+1)$, \eqref{5.44} and for the nonlinear term \eqref{5.45} and equality \eqref{1.10} (here the argument is similar to the one in the proof of Lemma~\ref{L2.4}) we find that for $i=0$ the right-hand side of \eqref{5.46} belongs to $C([0,T];L_2(I))$. As a result, 
\begin{equation}\label{5.47}
\partial_t^j u \in C([0,T];H^{3(k-m)}(I)).
\end{equation}
Returning to equality \eqref{5.46} for $i=1$ we find with the use of \eqref{5.47} that its right-hand side belongs to $L_2(Q_T)$ and, therefore, $\partial_t^j u \in L_2(0,T; H^{3(k-m)+1}(I))$.
\end{proof}





\begin{thebibliography}{99}

\bibitem{BF24}
O.~S.~Balashov and A.~V.~Faminskii, On direct and inverse problems for systems of odd-order quasilinear evolution equations, Eurasian Math. J. 15:4 (2024), 33--53. 

\bibitem{BF25}
O.~S.~Balashov and A.~V.~Faminskii, Inverse initial-boundary value problem for systems of odd order quasilinear evolution equations, Sovrem. Mat. Fundam. Napravl. 71:1 (2025), 18--32; Engl. transl. in J. Math. Sci. 293:4 (2025), 457--470.

\bibitem{C-FS}
R.~A.~Capistrano-Filho and L.~S.~de Souza, Control results with overdetermination condition for higher order dispersive system, J. Math. Anal. Appl. 506 (2022), 125546.

\bibitem{F19-1}
A.~V.~Faminskii, Controllability problems for the Korteweg--de~Vries equation with integral overdetermination, Differential Equ. 55:1 (2019), 1--12.

\bibitem{F19-2}
A.~V.~Faminskii, Control problems with an integral condition for Korteweg--de~Vries equation on unbounded domains, J. Optimization Theory Appl. 180:1 (2019), 290--302.

\bibitem{F23}
A.~V.~Faminskii, On inverse problems for odd-order quasilinear evolution equations with general nonlinearity, J. Math. Sci. (N. Y.) 271:3 (2023), 281--299.

\bibitem{FM}
A.~V.~Faminskii and E.~V.~Martynov, Inverse problems for the higher order nonlinear Schr\"odinger equation, J. Math. Sci. (N. Y.) 274:4 (2023), 475--492.

\bibitem{FJ}
J.~Fan and S. Jiang, Well-posedness of an inverse problem of a time-dependent Ginzburg-Landau model for superconductivity, Comm. Math. Sci. 3:3 (2005), 393--401.

\bibitem{FN}
J.~Fan and G.~Nakamura, Local solvability of an inverse problem to the density-dependent Navier--Stokes equations, Appl. Anal. 87:10-11 (2008), 1255--1265.

\bibitem{LCL}
S.~Lu, M.~Chen and Q.~Lui, A nonlinear inverse problem of the Korteweg--de~Vries equation, Bull. Math. Sci. 9:4 (2019), 1950014.

\bibitem{KPV}
C.~E.~Kenig, G.~Ponce and L.~Vega, Well-posedness and scattering results for the generalized Korteweg--de~Vries equation via the contraction principle, Commun. Pure Appl. Math. 46:4 (1993), 527--620.
\bibitem{Mar}
E.~V.~Martynov, Inverse problems for the generalized Kawahara equation, Lobachevskii J. Math.  43:10 (2022), 1--11.

\bibitem{POV} 
A.~I.~Prilepko, D.~G.~Orlovsky and I.~A.~Vasin, Methods for Solving Inverse Problems in Mathematical Physics, Marcel Dekker Inc., New York--Basel, 1999.

\end{thebibliography}
\end{document}